\pgfplotsset{compat=1.18}
\newcommand{\jshsr}[1]{{\textcolor{black}{#1}}}
\newcommand{\jshsb}[1]{{\textcolor{black}{#1}}}
\newcommand{\ck}[1]{ {\color{green!70!black}#1} }
\def\bu{\mathbf{u}}
\def\bud{\widetilde{\mathbf{u}}}
\def\bua{\widehat{\mathbf{u}}}
\def\bv{\mathbf{v}}
\def\bvd{\widetilde{\mathbf{v}}}
\def\bva{\widehat{\mathbf{v}}}
\def\bw{\mathbf{w}}
\def\bz{\mathbf{z}}
\def\bf{\mathbf{f}}
\def\bg{\mathbf{g}}
\def\bn{\mathbf{n}}
\def\bW{\boldsymbol{W}}
\def\bL{\boldsymbol{L}}
\def\bH{\boldsymbol{H}}
\def\bF{\mathbf{F}}
\def\bG{\mathbf{G}}
\def\bV{\boldsymbol{V}}
\def\bpsi{\boldsymbol{\psi}}
\def\du{\,\mathrm{d}}
\newcommand{\dx}{\du x}
\DeclareMathOperator{\dive}{div}
\def\one{\chi}
\def\pd{\widetilde{p}}
\def\pa{\widehat{p}}
\def\pid{\widetilde{\pi}}
\def\pia{\widehat{\pi}}
\def\ST{\mathcal{S}_T}
\def\Ss{\mathcal{S}_s}
\def\alphat{\beta}
\DeclareMathOperator{\ws}{\mathrel{\ensurestackMath{\stackon[1pt]{\rightharpoonup}{\scriptstyle\ast}}}}
\DeclareMathOperator{\wPhi}{\mathrel{\ensurestackMath{\stackon[1pt]{\rightharpoonup}{\Phi}}}}
\theoremstyle{plain}
\newtheorem{theorem}{Theorem}[section]
\newtheorem{proposition}[theorem]{Proposition}
\newtheorem{lemma}[theorem]{Lemma}
\newtheorem{claim}[theorem]{Claim}
\theoremstyle{definition}
\newtheorem{definition}[theorem]{Definition}
\newtheorem{assumption}[theorem]{Assumption}
\theoremstyle{remark}
\newtheorem{remark}[theorem]{Remark}
\title{Long-time behavior of solutions to a fluid dynamic shape optimization problem via phase-field method
}
\author{Michael Hinze%
	\footnote{Mathematical Institute, University of Koblenz, Germany}
	\orcidlink{0000-0001-9688-0150},\,%
	\and Christian Kahle%
	\footnotemark[1]
	\orcidlink{0000-0002-3514-5512},\,%
	\and John Sebastian H. Simon%
	\footnotemark[1]
	\orcidlink{0000-0002-7711-9582}
}
\date{\today}
\begin{document}
	
	\maketitle

	\begin{abstract}
		We investigate the long time behavior of solutions to a shape and topology optimization problem with respect to the time-dependent Navier--Stokes equations. The sought topology is represented by a stationary phase-field that represents a smooth indicator function. The fluid equations are approximated by a porous media approach and are time-dependent. In the latter aspect, the considered problem formulation extends earlier works.

		We prove that if the time horizon tends to infinity, minima of the time-dependent problem converge towards minima of the corresponding stationary problem. To do so, a convergence rate, with respect to the time horizon, of the values of the objective functional is analytically derived. This allowed us to prove that the solution to the time-dependent problem converges to a phase-field, as the time horizon goes to infinity, which is proven to be a minimizer for the stationary problem. We validate our results by numerical investigation. 
		
			\medskip
		\noindent\textbf{Keywords.} Navier--Stokes equations, shape optimization, phase-field method, long-time behavior
		
		\noindent\textbf{MSC Codes.} 35Q93, 35Q30, 76D55, 35R35
	\end{abstract}

	\section{Introduction}
	We study the long-time behavior of solutions to shape and topology optimization problems governed by the time-dependent Navier–Stokes equations. The optimization problem is formulated on a finite time horizon, whereas the shape variable is assumed to be time independent. Our primary interest lies in the convergence properties of the optimal shapes as the time horizon tends to infinity.
	
	Shapes are represented implicitly by a \jshsb{stationary} phase-field variable $\varphi$ defined on a fixed hold-all domain $\Omega\subset \mathbb{R}^2$. Within this framework, we consider the following optimization problem:
	\begin{align*}
		\min_{\bu,\varphi} J_T(\bu,\varphi) &= \frac{1}{T}\left[\int_0^T\int_\Omega \left( \frac{\varphi + 1}{2} \right) |\bu-\bu_d|^2 \one_\omega \du x\du t + \int_0^T\int_\Omega \alphat_\varepsilon(\varphi)|\bu|^2\du x\du t 
		\right]  + \gamma \mathbb{E}_{\varepsilon}(\varphi)\\
		\text{ subject to }& \nonumber\\
		&
		\begin{aligned}
			\partial_t \bu + \alpha_\varepsilon(\varphi)\bu - \mu\Delta \bu + (\bu\cdot\nabla)\bu + \nabla p & = \bf &&\text{ in }\Omega_T:= \Omega\times (0,T),\\
			\dive\bu & = 0 && \text{ in }\Omega_T,\\
			\bu & = \bg &&\text{ on }\partial \Omega_T = \partial \Omega \times (0,T),\\
			\bu(0) & = \bu_0 &&\text{ in }\Omega,
		\end{aligned}
	\end{align*}
    \jshsr{where $(\bu,p): \Omega \to \mathbb{R}^2\times \mathbb{R}$ corresponds to the velocity-pressure pair of the fluid, $\mu$ is the kinematic viscosity, $\bf$ denotes an external force acting on the fluid, $\bg$ is a given Dirichlet data acting on the boundary, and $\bu_0$ is the initial velocity of the fluid. The function $\chi_\omega$ is the characteristic function for the subdomain $\omega\subset \Omega$ of observation, and $\alpha_\varepsilon$ and $\beta_\varepsilon$ are Brinkman-type interpolating functions which model the influence of the shape on the fluid system.
	This formulation can be interpreted as a classical $L^2$ tracking-type optimization problem for a target velocity field $\bu_d$, augmented by a perimeter regularization term expressed through the Ginzburg--Landau energy functional $\mathbb{E}_{\varepsilon}$. Since the phase-field variable $\varphi$ acts as coefficient in the state equation, the shape/topology optimization problem is rendered into a minimization problem with PDE constraints, where now  $\varphi$ acts as control. This enables us to invoke the well developed machinery from PDE constrained optimization, found for example in \cite{hinze2009}, for the analytical and numerical treatment of this 
    optimization problem.}
	For a detailed description of the model and assumptions, we refer to Section~\ref{sec:setup}.

  \jshsr{ We note that in fluid dynamic optimization problems, frequently channel flows are  considered which call for imposing boundary conditions that model inflow and/or outflow. Usually, one considers the so-called do-nothing \cite{heywood1996artificial} or the directional do-nothing boundary condition \cite{braack2014directional}. However, existence (do-nothing) and/or uniqueness (directional do nothing) results for weak solutions are lacking, rendering it not possible to work with reduced cost functionals, which we extensively use for the analysis in Section \ref{sec:optProblems}. Moreover, with the directional do-nothing conditions we are also faced with non-smoothness issues, which affect the differentiability of the control to state operator.
  }
	
	\paragraph*{Contribution}
	The highlight of this article is the inquiry of the asymptotic behavior of the optimal values of $J_T$. To do so, we compare the evaluations of $J_T$ on their global optimizers against the optimal value of a corresponding problem with a stationary flow field, see Theorem~\ref{thm:asymp}. 
	As a consequence, we show that global minimizers of $J_T$ converge to a global minimizer of the stationary problem as $T\to \infty$, see Theorem~\ref{thm:phiInftyIsMin}.

	In addition, by incorporating time-dependent fluid dynamics, we extend earlier results from the stationary setting considered e.g. in \cite{garcke2015} to the fully time-dependent case.

	\paragraph{Literature Review}

	Foundational contributions in the direction of optimization involving fluids were made in \cite{abergel1990} and in \cite{fursikov1983}, where first-order optimality conditions for optimal control problems constrained by the Navier--Stokes equations were derived. These works laid the groundwork for gradient-based algorithms for approximating optimal controls in time-dependent fluid systems. Since then, a substantial body of literature has developed around optimization problems constrained by fluid dynamics, see, e.g., \cite{casas2012,casas2007,deckelnick2003,deckelnick2004,gunzburger1999,gunzburger2000}.  
	
	A particular class of such problems concerns the optimization of the shape of the fluid domain, commonly referred to as \emph{shape optimization problems}. One of the earliest and most influential works in this direction is \cite{pironneau1973}, establishing first-order necessary conditions for drag minimization constrained by the Stokes equations. This seminal result inspired subsequent investigations, including \cite{bello1992,bello1997}, where the differentiability of the drag functional with respect to domain variations were analyzed. More recent works, such as \cite{gao2008,iwata2010,kasumba2013,schmidt2010}, have further advanced this line of research by considering numerical implementations.  
	
	A defining feature of shape optimization problems is the variability of the underlying domain, which introduces both analytical and numerical difficulties. Analytically, the variation of the domain affects the functional spaces in which solutions are defined, thereby complicating convergence analyses. Numerically, modifying the domain during the solution process often necessitates specialized adaptations, making the problem more intricate and less tractable. To address these challenges, several authors have proposed reformulations in which the computational domain is fixed while the geometry of interest is embedded within it, i.e., 
	the geometry is implicitly described through an auxiliary function. Among such approaches, the phase-field method has gained considerable attention. A series of works \cite{garcke2015,garcke2016,garcke2016b,garcke2018} applied the phase-field method to shape optimization problems governed by the stationary Navier--Stokes equations, demonstrating in particular its ability to accommodate topological changes in the fictitious domain.

	A particular property that the previously mentioned works dealing with the phase-field approach to shape optimization is that the fluid is governed by the stationary Navier--Stokes equation. Although these works provide vital information for solving topology and shape optimization problems, the nonlinear nature of fluid dynamics gives rise to phenomena only observable in the unsteady case. A few examples of such are vortex shedding, transient separation and wakes \cite{wu1998}. We also mention the case where the fluid equations do not possess a stable equilibrium point \cite{ngom2015,suri2018}.

	Finally, particular attention is drawn to the role of the design function $\varphi$ in the state equation, that appears as a control in the coefficient. In the typical case where $\alpha_\varepsilon$ depends linearly on $\varphi$, the problem resembles bilinear optimal control problems, 
	which have been applied, for example, to the modeling of ecological phenomena \cite{mazari2022,mazari2023} and of the Bose-Einstein condensates and non-relativistic bosonic atoms and molecules \cite{hintermuller2013,feng2013} .

	\paragraph{Motivation}

	The main advantage of using phase fields is that the shapes and the inherent topology of the domain are quantified by a design function. \jshsr{This allows us to naturally handle topological changes during the iterative process of approximating the optimal solution, meaning the initial guess does not need to share the topology of the final optimal shape.}
	
	Furthermore, we emphasize that the phase-field method, together with the present investigation, provides a pathway toward understanding the turnpike property in shape optimization. Essentially, the turnpike property in optimization is the phenomenon that the optimal trajectory spends a long-time near a steady-state, see, e.g., \cite{zaslavski2014}.

	The phase-field approach offers a  mathematically and numerically tractable representation of shape and topology. 
	Specifically, the current formulation leads to a first-order optimality system that directly links the optimal shape and optimal state. This does not only facilitate numerical solution but also enables a systematic quantification of shapes and topology, which may support future proofs of the turnpike property in shape optimization. 
	
	With respect to the convergence of shape with respect to time we are essentially building up on the works of \cite{simon2023} and \cite{lance2019}. \jshsr{In the former, the shape optimization problem is posed in the \emph{sharp} setting where the analysis is based on the topology of the characteristic functions of the admissible domains, and the numerics utilized a sensitivity analysis enabling the author to design a steepest-descent algorithm that searches for deformation fields that alters a pre-defined domain and reduces the objective functional.} The latter attempted to establish the turnpike property for a heat-equation-constrained shape optimization problem and achieved numerical evidence. 

    \jshsr{We further point out that even though the work in \cite{simon2023} and this manuscript address the same physical goal, i.e., investigating the long-time behavior of shape optimization problems involving the Navier--Stokes equations, the current investigation approaches the problem with a fully  mathematically tractable approach through the phase-field. Our utilization of the Ginzburg--Landau energy functional equips our analysis with a natural topology for the quantification of the domain. On the other hand, the proofs in \cite{simon2023} utilize restrictive assumptions on the admissible domains to allow to work with the so-called cone property, which in consequence provides an avenue to compactness arguments.  We also underline the consistency between the analysis and the numerical implementation that the phase-field method afforded us, i.e., both the analysis and the numerics revolve around looking for the minimizing phase-field variable $\varphi$.  }

	The phase-field formulation resembles a bilinear optimal control problem, in which the control and state appear multiplicatively in the governing equation, as seen in Section~\ref{sec:setup}. The turnpike property for such bilinear problems is an active area of research, see, e.g., \cite{mazari2022b}. Our work will also contribute to investigating the turnpike property in bilinear optimal control problems constrained by the Navier–Stokes equations.

	Additionally, for this work we extend results for topology optimization using a phase field formulation for the stationary Navier--Stokes equation from \cite{garcke2015,garcke2016,garcke2016b,GHHKL2016-ifb,garcke2018} to the setting with  \emph{time-dependent} Navier--Stokes equations.
	We analyze both the time-dependent optimization problem and its long-time behavior, thereby establishing a methodological framework with broader applicability. Notably, the analytical tools developed herein are not restricted to the specific problem at hand; they may also be adapted to alternative objective functionals or to other physical models, such as those studied in \cite{garcke2022,garcke2024,auricchio2024}, or to simpler systems such as the heat and Poisson equations.

	\paragraph*{Outline}
	In Section~\ref{sec:setup} we introduce the problem that we investigate.
	Thereafter we collect preliminaries including standing assumptions in Section~\ref{sec:preliminaries}. 
	In Section~\ref{sec:solGovEq} we summarize and investigate both the existence and regularity of solutions to the governing fluid equations in the stationary and the time-dependent  setting and introduce and investigate the stationary and time-dependent optimization problem in Section~\ref{sec:optProblems}. The main results on long time behavior of global minimizers are derived in Section~\ref{sec:LongTimeBehavior}.
	The results are validated in Section~\ref{sec:numerics} through a numerical example.


	\section{Setup and model}
	\label{sec:setup}
	We describe the shape and topology optimization problem using a phase field approach, following e.g. in \cite{garcke2015,garcke2018} and we refer for a decent introduction of the model to this literature. We consider the problem with both time-dependent and stationary state equations.

	Let $\Omega$ denote a hold-all domain that is supposed to be divided into a fluid domain and an obstacle.
	To utilize the so-called phase field method, we consider a smooth indicator function that serves as a design function, $\varphi : \Omega \to [-1,1]$ such that the set $\{x\in\Omega: \varphi(x) = 1\}$ indicates the fluid domain, while $\{x\in\Omega: \varphi(x) = -1\}$ approximates the obstacle with a porous medium. Furthermore, the design function is assumed to change its value rapidly but continuously from $+1$ to $-1$, which leads to an interface  of thickness proportional to $\varepsilon>0$ at the boundary of the fluid domain where $|\varphi| < 1$. This thickness $\varepsilon$ is a given modeling parameter.
	In Figure~\ref{fig:setup:sketch} a sketch of the situation is drawn.

	The introduction of the design function $\varphi$, which implicitly describes the fluid domain, calls us to introduce a model for the fluid equations that are now posed on the hold-all domain $\Omega$.
	For this we consider a porous media approximation following \cite{article:BorrvallPetersson03} by assuming that the obstacle has a very small permeability $(\overline{\alpha}_\varepsilon)^{-1}\ll 1$.
	Moreover we introduce a function $\alpha_\varepsilon:\mathbb{R}\to \mathbb{R}$ satisfying $\alpha_\varepsilon(1) = 0$ and $\alpha_\varepsilon(-1) = \overline{\alpha}_\varepsilon$ to interpolate between the fluid domain and the porous media.
	Consequently, the state equations are defined in the whole domain $\Omega$
	and we respectively use the following state equations governing the time-dependent velocity-pressure pair $(\bu,p)$ and the stationary velocity-pressure pair $(\bv,\pi)$
	\begin{align}
		\label{poreq:timeNS}
		&\left\{
		\begin{aligned}
			\partial_t \bu + \alpha_\varepsilon(\varphi)\bu - \mu\Delta \bu + (\bu\cdot\nabla)\bu + \nabla p & = \bf &&\text{ in }\Omega_T:= \Omega\times (0,T),\\
			\dive\bu & = 0 && \text{ in }\Omega_T,\\
			\bu & = \bg &&\text{ on }\partial \Omega_T = \partial \Omega \times (0,T),\\
			\bu(0) & = \bu_0 &&\text{ in }\Omega,
		\end{aligned}
		\right.\\ 
		\text{and } \hspace{1.5cm}& \nonumber \label{poreq:statNS}\\
		&\left\{
		\begin{aligned}
			\alpha_\varepsilon(\varphi)\bv - \mu\Delta \bv + (\bv\cdot\nabla)\bv + \nabla \pi & = \bf_s &&\text{ in }\Omega,\\
			\dive\bv & = 0 && \text{ in }\Omega,\\
			\bv & = \bg_s &&\text{ on }\partial \Omega.
		\end{aligned}
		\right.
	\end{align}
	Here $\bf,\bg$ and $\bf_s,\bg_s$ are given external volume force and Dirichlet data for the time-dependent and stationary equations, respectively, $\bu_0$ is a fixed initial data, and $\mu>0$ is the kinematic viscosity. 
	
	\begin{figure}
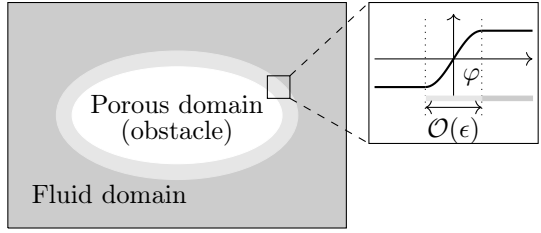

		\centering
		\includestandalone{figures/standalone/TopoOpt_Navier_schematic_sharp_to_diffuse.tikz}
		\caption{The geometrical setup using a phase field function $\varphi$.}
		\label{fig:setup:sketch}
	\end{figure}
	
	The objective functionals that we  consider is of tracking-type within the fluid domain, where we track a given stationary target velocity field $\bu_d$ in the $L^2$-norm within some given observation domain $\omega$, for which $\chi_\omega$ denotes its indicator function.
	The corresponding integral is extended to $\Omega$ by using   $\frac{\varphi+1}{2}$  as indicator function for the fluid domain. 
	
	For the minimizers $\varphi$ of the objective functionals to have the form of a phase-field, we shall consider the Ginzburg--Landau energy functional
	\begin{align*}
		\mathbb{E}_\varepsilon(\varphi) = \frac{1}{2c_0}\left( \int_\Omega \frac{\varepsilon}{2}|\nabla\varphi|^2 + \frac{1}{\varepsilon}\Psi(\varphi) \du x \right),
	\end{align*}
	with some regularization constant $\gamma>0$ to the objective.
	Here $\Psi:[-1,1]\to\mathbb{R}$ is a given potential with equal minima at $-1$ and $1$, and $c_0>0$ is a constant dependent only on $\Psi$ in the sense that
	\begin{align*}
		c_0 = \frac{1}{2}\int_{-1}^1 \sqrt{2\Psi(s)}\du s.
	\end{align*}
	Furthermore, from the definition of $\mathbb{E}_\varepsilon$ we see that admissible design functions belong to
	\begin{align*}
		\Phi_{ad} := \{\varphi\in H^1(\Omega) : |\varphi (x)| \le 1 \text{ a.e. in }\Omega\}
	\end{align*}
	and  by definition, we have for any $\varphi\in\Phi_{ad}$
	\begin{align}\label{bound:Phi}
		\|\varphi\|_{L^\infty} \le 1.
	\end{align}

	From the introduction of the porous media approximation, we note that the contribution of fluid velocity on the interface may not be at all negligible. For this reason, we also consider a penalization for this contribution on the objective functional. Let us consider a function ${\beta}_\varepsilon : \mathbb{R}\to \mathbb{R}$ possessing the same properties as $\alpha_{\varepsilon}$ above, i.e., there exists $\overline{\beta}_\varepsilon\in\mathbb{R}$ such that $(\overline{\beta}_\varepsilon)^{-1}\ll 1 $  from which we define $\beta_\varepsilon$ as $\beta_\varepsilon(1) = 0$ and $\beta_{\varepsilon}(-1) = \overline{\beta}_\varepsilon$. 
	
	More precisely, we respectively consider the following objective functions $J_T$, for the time-dependent setting, and $J_s$, for the stationary setting:
	\begin{align}
		&  J_T(\bu,\varphi) = \frac{1}{T}\left[\int_0^T\int_\Omega \left( \frac{\varphi + 1}{2} \right) |\bu-\bu_d|^2 \one_\omega \du x\du t + \int_0^T\int_\Omega \alphat_\varepsilon(\varphi)|\bu|^2\du x\du t 
		\right]  + \gamma \mathbb{E}_{\varepsilon}(\varphi)\label{objphasetime},\\
		&  J_s(\bv,\varphi) = \int_\Omega \left( \frac{\varphi + 1}{2} \right) |\bv-\bu_d|^2\one_\omega \du x  + \int_\Omega \alphat_\varepsilon(\varphi)|\bv|^2\du x 
		+ \gamma \mathbb{E}_{\varepsilon}(\varphi). \label{objphasestat}
	\end{align}
	The main result of this work is the proof of the following results, \jshsr{which are stated in Theorem~\ref{thm:asymp} and Theorem~\ref{thm:phiInftyIsMin}.}
    \jshsr{
	\begin{claim}
		Let $\varphi^T$ and $\varphi^s$ be global minimizers of the minimization problems associated with the cost functionals \eqref{objphasetime} and \eqref{objphasestat}, respectively, and suppose that some meaningful assumptions are imposed on the data.
        \begin{enumerate}
            \item There is $C>0$, independent of $T$, such that
            \begin{align*}
			|J_T(\bu(\varphi^T),\varphi^T) - J_s(\bv(\varphi^s),\varphi^s)| \le 
            C \left( \frac{1}{T} + \frac{1}{\sqrt{T}}\right).
		\end{align*}
        \item The family of solutions $(\varphi^T)\subset \Phi_{ad}$ converges, up to a subsequence, to an element $\varphi^\infty\in\Phi_{ad}$ which is a minimizer of an optimization problem corresponding to the objective functional \eqref{objphasestat}.
        \end{enumerate}
	\end{claim}
    }

    \jshsr{
	\begin{remark}
	    As a primary consequence of our analysis, we establish that the stationary shape and topology optimization problem is the asymptotic limit of the transient formulation. Physically, this indicates that the time-averaged performance of a stationary phase-field evaluated over a long time horizon aligns with its steady-state performance. The inequality in the claim above demonstrates that the optimal shape for the evolutionary system is asymptotically governed by the stationary physics. From a practical standpoint, this provides a rigorous justification for using the stationary solution, not only as a consistent approximation of the long-time behavior but also as a robust initial guess for high-fidelity, time-dependent shape optimization.
	\end{remark}
    }
	
	\begin{remark}
		Problems~\eqref{objphasetime} and \eqref{objphasestat} are approximations of the corresponding formal sharp problems
		\begin{align*}
			\min_{B}\, J_T(\bu,B) &:= \frac{1}{T}\int_0^T\int_E|\bu-\bu_d|^2 \one_\omega \du x\du t + \gamma \mathrm{P}_\Omega(B)
			\\
			\text{ subject to }& \nonumber\\
			&
			\left\{
			\begin{aligned}
				\partial_t \bu - \mu\Delta \bu + (\bu\cdot\nabla)\bu + \nabla p & = \bf &&\text{ in }E_T:= E\times (0,T),\\
				\dive\bu & = 0 && \text{ in }E_T,\\
				\bu & = 0 &&\text{ on }\partial B_T:= \partial B\times(0,T),\\
				\bu & = \bg &&\text{ on }\partial\Omega_T:= \partial\Omega\times(0,T),\\
				\bu(0) & = \bu_0 &&\text{ in }E,
			\end{aligned}
			\right.
		\end{align*}
		and
		\begin{align*}
			\min_{B}\, J_s(\bv,B) &:= \int_E |\bv-\bu_d|^2 \one_\omega \du x + \gamma \mathrm{P}_\Omega(B), \\
			\text{ subject to } &\nonumber\\
			&
			\left\{
			\begin{aligned}
				- \mu\Delta \bv + (\bv\cdot\nabla)\bv + \nabla \pi & = \bf_s &&\text{ in }E,\\
				\dive\bv & = 0 && \text{ in }E,\\
				\bv & = 0 &&\text{ on }\partial B,\\
				\bv & = \bg_s &&\text{ on }\partial
				\Omega,
			\end{aligned}
			\right.
		\end{align*} 
		where $B$ denotes the obstacle, $E$ denotes the fluid domain and $P_\Omega(B)$ denotes the perimeter of $B$.
		Formal matched  asymptotics for $\varepsilon \to 0$ are carried out for the stationary setting in \cite{GHHKL2016-ifb}.
		Especially,   $\mathbb{E}_\varepsilon(\varphi)$ $\Gamma$-convergences to the perimeter functional $P_\Omega(B)$    as $\varepsilon\to 0$, see \cite{Modica_GradientTheoryPhaseTransition}.
	\end{remark}


	\section{Preliminaries and standing assumptions}
	\label{sec:preliminaries}
	Let $X$ be a Banach space with the norm $\| \cdot\|_X$, we denote its dual as $X^*$ and we write $\langle x^*,x\rangle_X $ for the dual pairing of $x\in X$ and $x^*\in X^*$. 
	We denote by $L^p(D)$ the space containing Lebesgue measurable and $p$-integrable functions on a measurable domain $D$. We denote by $\|\cdot\|_{L^p}$ the norm in $L^p(D)$, whenever the domain $D$ is known. The $L^2$-inner product in a domain $D$ shall be denoted as $(\cdot,\cdot)_D$. 
	
	Let $p\ge 1$ and $m\ge 0$, the Sobolev spaces in a domain $\Omega$ will be denoted as $W^{m,p}(\Omega)$, with $H^m(\Omega) = W^{m,2}(\Omega)$. The norm in $W^{m,p}(\Omega)$ is denoted by $\|\cdot\|_{W^{m,p}}$. 
	We also consider zero trace Sobolev spaces which we denote as $W^{1,p}_0(\Omega)$ or $H^1_0(\Omega)$ for $p=2$. 
	We write in bold letter, for instance $\bL^p(\Omega)$ and $\bW^{m,p}(\Omega)$, to emphasize spaces consisting of vector-valued functions.

	To take into account the incompressibility of the fluid we consider the following solenoidal spaces
	\begin{align*}
		&\bV := \{ \bpsi\in \bH_0^{1}(\Omega): \dive \bpsi = 0 \text{ in }\Omega \}\text{ and }\\
		&\bH := \{ \bpsi\in\bL^2(\Omega) : \dive\bpsi = 0 \text{ in }L^2(\Omega), \bpsi\cdot\bn = 0 \text{ on }\partial\Omega \}.
	\end{align*}
	The spaces $\bV$ and $\bH$ are respectively endowed with the norm
	\begin{align*}
		\|\bpsi\|_{\bV} = \| \nabla\bpsi\|_{\bL^2}\text{ and } \|\bpsi\|_{\bH} = \| \bpsi\|_{\bL^2}.
	\end{align*}
	The pair $(\bV,\bH)$ is known to satisfy the Gelfand triple, i.e., the embeddings $\bV\hookrightarrow \bH \hookrightarrow \bV^*$ are dense, continuous and compact. In fact, the compactness follows from Rellich--Kondrachov embedding and Schauder theorems.

	We recall that the norm $\|\cdot\|_{\bV}$ is equivalent to the norm in $\bH^1(\Omega)$ due to Poincar{\'e} inequality, i.e., there exists a constant $c_P>0$ such that 
	\begin{align*}
		\|\bv\|_{\bL^2}\le c_P\|\nabla\bv\|_{\bL^2} \text{ for any }\bv\in \bH_0^1(\Omega).
	\end{align*}
	We explicitly write the Poincar{\'e} constant since it is necessary to keep track of the constants when proving the main result. We also recall the Ladyzhenskaya constant $c_L>0$ that satisfies
	\begin{align*}
		\|\bv\|_{\bL^4} \le \sqrt{c_L}\|\bv\|_{\bL^2}^{1/2}\|\nabla\bv\|_{\bL^2}^{1/2} \text{ for any }\bv\in\bH_0^1(\Omega).
	\end{align*}

	For the time-dependent Navier--Stokes equations, we consider functions which map $I_T=(0,T)$ to Banach spaces $X$. The space of functions from an interval $I_T$ to $X$ that can be continuously extended to the closed interval $\overline{I_T}$ will be denoted by $C(\overline{I_T};X)$. 
	The space $L^p(I_T; X)$ consists of functions $\psi:I_T\to X$ such that $t\mapsto \|\psi(t)\|_{X}$ belongs to $L^p(0,T)$, the norm in such space will be denoted as $\|\cdot\|_{L^p(X)}$. 
	Note that the elements of $L^p(I_T;L^p(\Omega))$ can be identified as elements in $L^p(\Omega_T)$.

	The spaces that we are particularly interested in are the following: 
	\begin{align*}
		& W_T^p(X):= \{v\in L^2(I_T;X): \partial_t v\in L^p(I_T;X^*) \},\\
		&\bW^{2,1}_{p,T} := \{\bpsi\in L^p(I_T;\bW^{2,p}(\Omega)\cap \bV\,): \partial_t\bpsi \in \bL^p(\Omega_T) \}.
	\end{align*}
	We see by the Aubin--Lions--Simon embedding theorem that $W_T^2(\bV\,)\hookrightarrow C(\overline{I_T}; \bH)$. 
	From Amman embedding theorem \cite[Theorem 3]{amann2001}, 
	we get the compact embedding $\bW^{2,1}_{2,T} \hookrightarrow C(\overline{I_T};\bV)$ .

	\medskip
	We conclude this section with standing assumptions. We start with the given data influencing the fluid equations.
	
	\begin{assumption}
		\label{datassum}
		The given data acting on the fluid are assumed to satisfy
		\begin{enumerate}[label=(\roman*)]
			\item homogeneous Dirichlet boundary data, i.e., $\bg = \bg_s = 0$;
			\item the given external forces satisfy $\bf\in L^2(I_T;\bL^2(\Omega))$ and $\bf_s\in \bL^2(\Omega)$.
		\end{enumerate}
	\end{assumption}
	It is well-known that uniqueness of weak solutions to the stationary Navier--Stokes equations is not guaranteed unless smallness of external force or sufficient viscosity is imposed. We shall therefore impose the following assumption for the said uniqueness.
	\begin{assumption}
		\label{statdatassum}
		The external force $\bf_s\in \bL^2(\Omega)$ and the kinematic viscosity $\mu>0$ satisfy
		\begin{align}\label{data:smallness}
			{2}c_P^2c_L\|\bf_s\|_{\bL^2} < \mu^2.
		\end{align}
	\end{assumption}

	To facilitate the convergence analysis required in our main result, we assume additional properties for the source functions.
	\begin{assumption}\label{sourceassump}
		There exists $K>0$ such that
		\begin{align*}
			\int_0^T e^{\varsigma t} \| \bf(t) - \bf_s \|_{\bL^2}^2 \du t \le K,
		\end{align*}
		for all $T>0$, where $\varsigma : = \frac{\mu^4 - 2c_L^2c_P^4\|\bf_s\|_{\bL^2}^2}{c_p^2\mu^3}  >0$.
	\end{assumption}
	A direct computation shows that this implies
	\begin{align}\label{connect:ffs}
		\|\bf\|_{L^2(\bL^2)} \le \sqrt{K} + \sqrt{T}\|\bf_s\|_{\bL^2}.
	\end{align}
	For the functions $\alpha_\varepsilon$ and $\beta_\varepsilon$, that describe the porous medium, we impose the following properties.
	\begin{assumption}
		\label{porousassum}
		The functions $\alpha_\varepsilon$ and $\alphat_\varepsilon$ satisfy
		\begin{enumerate}[label=(\roman*)]
			\item The functions $\alpha_\varepsilon:\mathbb{R}\mapsto\mathbb{R}$ and $\alphat_\varepsilon:\mathbb{R}\mapsto\mathbb{R}$ are nonnegative, satisfy $\alpha_\varepsilon,\alphat_\varepsilon\in C^{0,1}(\mathbb{R})$ with $\alpha_\varepsilon(1) = \alphat_\varepsilon(1) = 0$, $\alpha_\varepsilon(-1) = \overline{\alpha}_\varepsilon >0   $ and $\alphat_\varepsilon(-1) = \overline{\beta}_\varepsilon >0$, where $(\overline{\alpha}_\varepsilon ,\overline{\beta}_\varepsilon )\to +\infty$ as $\varepsilon\to 0$.
			\label{en:assump-alpha-i}
			
			\item As operators on $\Phi_{ad}$, $\alpha_\varepsilon$ and $\alphat_\varepsilon$ are Fr{\'e}chet differentiable in the $L^2$-norm. Specifically, we assume the existence of $\mathcal{A}_\varepsilon: \Phi_{ad}\to L^p(\Omega)$ and ${\mathcal{B}}_\varepsilon: \Phi_{ad}\to L^p(\Omega)$ for $p\ge 4$, such that 
			\begin{align*}
				\alpha_\varepsilon'(\varphi)\delta\varphi = \mathcal{A}_{\varepsilon}(\varphi)\delta\varphi \text{ and } \alphat_\varepsilon'(\varphi)\delta\varphi = {\mathcal{B}}_{\varepsilon}(\varphi)\delta\varphi
			\end{align*}
			for any $\delta\varphi \in H^1(\Omega)\cap L^\infty(\Omega)$ and such that
			\begin{align*}
				\max\{\|{\mathcal{A}}_{\varepsilon}(\varphi)\|_{L^p},\|{\mathcal{B}}_{\varepsilon}(\varphi)\|_{L^p} \} \le c_{\!\mathcal{A}}
			\end{align*}
			for any $\varphi\in \Phi_{ad}$. \label{en:assump-alpha-ii}

		\end{enumerate}	
	\end{assumption}
	
	A consequence of Assumption \ref{porousassum}\ref{en:assump-alpha-i} is that for any $\varphi\in\Phi_{ad}$ both $\alpha_\varepsilon(\varphi)$ and $\alphat_\varepsilon(\varphi)$ are in $L^\infty(\Omega)$. Indeed, for $\alpha_\varepsilon(\varphi)$, if $x\in \Omega$ is such that $|\varphi(x)|\le 1$ then
	\begin{align}\label{linfalpha}
		|\alpha_\varepsilon(\varphi(x))| = |\alpha_\varepsilon(\varphi(x)) - \alpha_\varepsilon(1)| \le c |\varphi(x) - 1|\le c(|\varphi(x)| + 1) \le 2c
	\end{align}
	where the Lipschitz constant $c>0$ is dependent neither on $x\in \Omega$ nor on $\varphi\in\Phi_{ad}$. The same reasoning can be used for $\alphat_\varepsilon(\varphi)$.
	
	Additionally, the Fr{\'e}chet differentiability of $\alpha_\varepsilon$ and $\alphat_\varepsilon$ implies their continuity on $\Phi_{ad}$ with respect to the $L^2$-norm.
	
	To analyze the optimization problem, we assume that the target profile and the double-obstacle potential satisfy the following properties.
	\begin{assumption}
		\label{objassump}
		$ $
		\begin{enumerate}[label=(\roman*)]
			\item  The target profile $\bu_d$ satisfies the regularity $\bu_d\in \bL^2(\omega)$.\label{en-assump-obj-i}
			\item The potential $\Psi: \mathbb{R}\to \overline{\mathbb{R}}$ is the double-obstacle potential defined as
			\begin{align*}
				\Psi(s) = \left\{ \begin{aligned}
					&\Psi_0(s) &&\text{if }|s|\le 1,\\
					&+\infty &&\text{if }|s|>1.
				\end{aligned} \right. \quad = \Psi_0(s) + I_{[-1,1]}(s)
			\end{align*}
			where $\Psi_0(s) = (1-s^2)/2$ and $I_{[-1,1]}(s)$ is the convex indicator function of the admissible range $[-1,1]$. \label{en-assump-obj-ii}
			The constant $c_0$ has consequently the value $c_0 = \frac{\pi}{2}$.
		\end{enumerate}
	\end{assumption}

	\begin{remark}
		We restrict our analysis to the two-dimensional Navier--Stokes equations due to the uniqueness and regularity issues for the three-dimensional (3D) case. To be precise, despite the weak-strong uniqueness results for solutions of the 3D Navier--Stokes, such result holds only locally in time.
	\end{remark}

	\section{Solutions of the governing equations}
	\label{sec:solGovEq}
	
	In this section we investigate and collect existence results of solutions to the time-dependent and the stationary Navier--Stokes equation, i.e.,  \eqref{poreq:timeNS} and \eqref{poreq:statNS}.
	
	To simplify the notations, we introduce the bilinear operator $a_\varphi:\bH_0^1(\Omega)\times \bH_0^1(\Omega) \to \mathbb{R}$ defined as
	\begin{align*}
		a_\varphi(\bu,\bv) = \mu\int_\Omega \nabla\bu:\nabla\bv\du x + \int_\Omega \alpha_\varepsilon(\varphi)\bu\cdot\bv\du x
	\end{align*}
	and the trilinear operator $b:\bL^4(\Omega)\times \bH_0^1(\Omega)\times \bL^4(\Omega)\to\mathbb{R}$, emanating from the nonlinearity in the Navier--Stokes equations, defined as 
	\begin{align*}
		b(\bu,\bv,\bw) = \int_\Omega [(\bu\cdot\nabla)\bv]\cdot \bw \du x.
	\end{align*}
	The operator $a_\varphi$ is continuous on $\bV\times\bV$ and coercive in $\bV$, i.e. there exists $c>0$ such that
	\begin{align*}
		c\|\bpsi\|_{\bV}^2 \le a_\varphi(\bpsi,\bpsi) \text{ for any }\bpsi\in\bV.
	\end{align*}
	To establish such inequality, one would have to utilize the nonnegativity of $\alpha_\varepsilon(\varphi)$.
	
	On the other hand, the operator $b$ is continuous on $\bV\times\bV\times \bV$. Furthermore, given $\bu\in\bV$ the bilinear operator $b(\bu,\cdot,\cdot):\bH_0^1(\Omega)\times \bH_0^1(\Omega)\to\mathbb{R}$ is antisymmetric, i.e.
	\begin{align*}
		b(\bu,\bv,\bw) = -b(\bu,\bw,\bv) \text{ for any }\bv,\bw\in \bH_0^1(\Omega).
	\end{align*}

	\subsection{Existence of solutions for time-dependent governing equations}
	
	We now discuss the existence of solutions to the governing equations. The proofs of the results below will be omitted as they are well-known. We refer the reader, for example, to \cite{abergel1990,temam1977} for further details.
	
	\subsubsection*{Design-to-state operator}
	Let us introduce which notion of solution we shall utilize for the time-dependent Navier--Stokes equation \eqref{poreq:timeNS}. For a given $\varphi\in \Phi_{ad}$ and $\bu_0 \in \bH$, we say that $\bu\in W_T^2(\bV)$ is a weak solution to \eqref{poreq:timeNS} if it solves the equation
	\begin{align}
		\label{weak:timeNS}
		\langle \partial_t\bu(t),\bpsi \rangle_{\bV} + a_\varphi(\bu(t),\bpsi) + b(\bu(t),\bu(t),\bpsi) = (\bf(t),\bpsi)_\Omega\quad \forall \bpsi\in \bV \text{ and a.e. }t\in(0,T),
	\end{align}
	and $\bu(0) = \bu_0$ in $\bH$.
	
	The following theorem establishes the existence and uniqueness of weak solution.
	
	\begin{theorem}\label{theorem:timeexist}
		Let Assumptions \ref{datassum} and \ref{porousassum}\ref{en:assump-alpha-i} hold, and $\bu_0\in \bH$. Then a unique weak solution $\bu\in W_T^2(\bV)$ to \eqref{poreq:timeNS} exists and satisfies
		\begin{align}
			&\|\bu\|_{L^\infty(\bH)}^2 + \mu\|\bu\|_{L^2(\bV)}^2 \le \frac{c_P^2}{\mu}\| \bf\|_{L^2(\bL^2)}^2 + \|\bu_0\|_{\bH}^2,\label{energy:time}\\
			&\|\partial_t\bu\|_{L^2(\bV)} \le c_P\left( \sqrt{T}\|\alpha_\varepsilon(\varphi)\|_{L^\infty}\|\bu\|_{L^\infty(\bH)} + \|\bf\|_{L^2(\bL^2)} \right) + \left(c_L\|\bu\|_{L^\infty(\bH)} + \mu \right)\|\bu\|_{L^2(\bV)}.\label{energy:timepr}
		\end{align}
	\end{theorem}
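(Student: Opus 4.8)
The plan is to construct the solution by a Faedo--Galerkin scheme combined with the compactness embeddings recalled in Section~\ref{sec:preliminaries}, and then to obtain uniqueness through a Gronwall argument that exploits the two-dimensional setting. I would fix a basis $\{\bpsi_k\}_{k\ge 1}$ of $\bH$ made of eigenfunctions of the Stokes operator, which is then orthogonal in $\bV$ as well, and let $P_m$ be the $\bH$-orthogonal projection onto $\bH_m:=\mathrm{span}\{\bpsi_1,\dots,\bpsi_m\}$. One seeks $\bu_m(t)=\sum_{k=1}^m g_k^m(t)\bpsi_k$ solving \eqref{weak:timeNS} tested against $\bpsi_1,\dots,\bpsi_m$, with $\bu_m(0)=P_m\bu_0$. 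Since Assumption~\ref{porousassum}\ref{en:assump-alpha-i} together with \eqref{linfalpha} gives $\alpha_\varepsilon(\varphi)\in L^\infty(\Omega)$, and $\bf\in L^2(I_T;\bL^2(\Omega))$ by Assumption~\ref{datassum}, the resulting ODE system has a right-hand side that is polynomial in the unknowns and measurable, integrable in $t$, so Carath\'eodory's theorem yields a local solution; the a priori bound below extends it to all of $[0,T]$ and is then inherited in the limit.

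For the energy estimate I would test the Galerkin equation with $\bu_m(t)$: the convective term vanishes by the antisymmetry of $b(\bu_m,\cdot,\cdot)$, the porous term $\int_\Omega\alpha_\varepsilon(\varphi)|\bu_m|^2$ is nonnegative, and $a_\varphi(\bu_m,\bu_m)\ge\mu\|\bu_m\|_{\bV}^2$; bounding $(\bf,\bu_m)_\Omega$ by Cauchy--Schwarz, the Poincar\'e inequality and Young's inequality, absorbing $\tfrac{\mu}{2}\|\bu_m\|_{\bV}^2$ to the left and integrating in time yields \eqref{energy:time} uniformly in $m$. For \eqref{energy:timepr}, I would test instead with an arbitrary $\bpsi\in\bV$, $\|\bpsi\|_{\bV}\le 1$ (it suffices to treat $P_m\bpsi$, since $\langle\partial_t\bu_m,\bpsi\rangle_{\bV}=\langle\partial_t\bu_m,P_m\bpsi\rangle_{\bV}$): estimate $|(\bf,\bpsi)_\Omega|\le c_P\|\bf\|_{\bL^2}$, $|a_\varphi(\bu_m,\bpsi)|\le\mu\|\bu_m\|_{\bV}+c_P\|\alpha_\varepsilon(\varphi)\|_{L^\infty}\|\bu_m\|_{\bH}$, and, using $b(\bu_m,\bu_m,\bpsi)=-b(\bu_m,\bpsi,\bu_m)$ and the Ladyzhenskaya inequality, $|b(\bu_m,\bu_m,\bpsi)|\le c_L\|\bu_m\|_{\bH}\|\bu_m\|_{\bV}$; taking the $L^2(0,T)$ norm in $t$ and inserting \eqref{energy:time} gives \eqref{energy:timepr} (with the left-hand side read as the $L^2(I_T;\bV^*)$ norm).

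With these $m$-uniform bounds, one passes to the limit: along a subsequence $\bu_m\rightharpoonup\bu$ in $L^2(I_T;\bV)$, weakly-$\ast$ in $L^\infty(I_T;\bH)$, $\partial_t\bu_m\rightharpoonup\partial_t\bu$ in $L^2(I_T;\bV^*)$, and, by the Aubin--Lions--Simon theorem, $\bu_m\to\bu$ strongly in $L^2(I_T;\bH)$. The latter, together with the boundedness of $\bu_m$ in $L^4(\Omega_T)$ provided by \eqref{energy:time} via the Ladyzhenskaya inequality (valid since $\Omega\subset\mathbb{R}^2$), allows the limit in $\int_0^T b(\bu_m,\bpsi_k,\bu_m)\,\zeta(t)\,\du t$ for fixed $k$ and $\zeta\in C_c^\infty(0,T)$, since $\nabla\bpsi_k\in\bL^\infty(\Omega)$ and $u_m^i u_m^j\to u^i u^j$ in $L^1(\Omega_T)$; the linear porous term passes by weak convergence alone because $\alpha_\varepsilon(\varphi)\in L^\infty(\Omega)$. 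A density argument in $\bpsi$ then shows $\bu$ solves \eqref{weak:timeNS}; the initial datum $\bu(0)=\bu_0$ in $\bH$ follows from $\bu\in W_T^2(\bV)\hookrightarrow C(\overline{I_T};\bH)$ together with $P_m\bu_0\to\bu_0$ in $\bH$; and \eqref{energy:time}--\eqref{energy:timepr} hold for $\bu$ by weak lower semicontinuity of the norms (or by repeating the estimates directly on \eqref{weak:timeNS}).

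For uniqueness, let $\bu_1,\bu_2$ be two solutions and $\bw=\bu_1-\bu_2$; then $\bw(0)=0$, and using $b(\bu_1,\bu_1,\bpsi)-b(\bu_2,\bu_2,\bpsi)=b(\bw,\bu_1,\bpsi)+b(\bu_2,\bw,\bpsi)$ and testing the difference equation with $\bpsi=\bw$ (so that $b(\bu_2,\bw,\bw)=0$) gives $\tfrac12\tfrac{d}{dt}\|\bw\|_{\bH}^2+\mu\|\bw\|_{\bV}^2\le|b(\bw,\bu_1,\bw)|\le c_L\|\bw\|_{\bH}\|\bw\|_{\bV}\|\bu_1\|_{\bV}$; Young's inequality, absorption of $\tfrac{\mu}{2}\|\bw\|_{\bV}^2$, and Gronwall's lemma with $\|\bu_1(\cdot)\|_{\bV}^2\in L^1(0,T)$ force $\bw\equiv 0$. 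The one genuinely delicate point throughout is the convective term: it is the two-dimensional Ladyzhenskaya inequality that keeps $b(\bu_m,\bu_m,\cdot)$ bounded in $L^2(I_T;\bV^*)$ — hence supplies the $\partial_t\bu_m$ bound and, via Aubin--Lions, the compactness needed for the nonlinear limit passage — and that also closes the uniqueness estimate; this is exactly where the restriction $\Omega\subset\mathbb{R}^2$ enters.
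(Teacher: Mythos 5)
Your proposal is correct and is exactly the classical Faedo--Galerkin argument (energy estimate by testing with $\bu_m$, $\bV^*$-bound on $\partial_t\bu_m$ via the Ladyzhenskaya inequality, Aubin--Lions compactness for the nonlinear limit, and the two-dimensional Gronwall uniqueness argument) that the paper itself invokes by omitting the proof and citing the standard references; your constants reproduce \eqref{energy:time} and \eqref{energy:timepr} as stated. You also correctly note that the left-hand side of \eqref{energy:timepr} should be read as the $L^2(I_T;\bV^*)$ norm, which is indeed a typo in the statement.
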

	
	\subsubsection*{Linearized design-to-state operator}
	The analysis on the optimization problem includes linearization of the design-to-state operator, 
	which compels us to analyze the following linear system
	\begin{align}\label{poreq:lintimeNS}
		&\left\{
		\begin{aligned}
			\partial_t \bud + \alpha_\varepsilon(\varphi)\bud - \mu\Delta \bud + (\bud\cdot\nabla)\bu_1 + (\bu_2\cdot\nabla)\bud + \nabla \pd & = \bF &&\text{ in }\Omega_T,\\
			\dive\bud & = 0 && \text{ in }\Omega_T,\\
			\bud & = 0 &&\text{ on }\Sigma_T,\\
			\bud(0) & = \bud_0 &&\text{ in }\Omega,
		\end{aligned}
		\right.
	\end{align}
	where $\varphi\in\Phi_{ad}$, $\bF\in L^2(I_T;\bL^2(\Omega))$, $\bu_1,\bu_2 \in L^4(I_T;\bL^4(\Omega))$ and $\bud_0\in \bH$.

	We define a  weak solution $\bud\in W_T^2(\bV)$  as  solving the variational equation
	\begin{align}
		\label{weak:lintimeNS}
		\langle \partial_t\bud(t),\bpsi \rangle_{\bV} + a_\varphi(\bud(t),\bpsi) + b(\bud(t),\bu_1(t),\bpsi) + b(\bu_2(t),\bud(t),\bpsi) = (\bF(t),\bpsi)_\Omega\quad \forall \bpsi\in \bV
	\end{align}
	for a.e. $t\in(0,T)$ and $\bud(0) = 0$ in $\bH$.
	
	\begin{theorem}
		\label{theorem:lintimeexist}
		Let Assumption \ref{porousassum}\ref{en:assump-alpha-i} hold, $\bF\in L^2(I_T;\bL^2(\Omega))$ and $\bu_1,\bu_2\in L^4(I_T;\bL^4(\Omega))$. Then a unique weak solution to \eqref{poreq:lintimeNS} exists and satisfies
		\begin{align}
			&\begin{aligned}
				\|\bud\|_{L^\infty(\bH)}^2 + \frac{\mu}{2}\|\bud\|_{L^2(\bV)}^2  \le \|\bud_0\|_{\bH}^2 + \frac{c_p^2}{\mu}\| \bF\|_{L^2(\bL^2)}^2,\label{energy:lintime}
			\end{aligned}\\
			\nonumber\\
			&\begin{aligned}
				\|\partial_t\bud\|_{L^2(\bV^*)}  &\le c_P\|\bF\|_{L^2(\bL^2)} + (c_P^2\|\alpha_\varepsilon(\varphi)\|_{L^\infty} + \mu)\|\bud\|_{L^2(\bV)} \\
				&+ \sqrt{c_L}( \|\bu_1 \|_{L^4(\bL^4)} + \|\bu_2 \|_{L^4(\bL^4)} )\|\bud \|_{L^\infty(\bH)}^{1/2}\|\bud \|_{L^2(\bV)}^{1/2}.
			\end{aligned} 
			\label{energy:lintimepr}
		\end{align}
		
	\end{theorem}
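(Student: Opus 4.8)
The plan is to establish existence of $\bud$ by a Faedo--Galerkin approximation and then to read off uniqueness together with the bounds \eqref{energy:lintime}--\eqref{energy:lintimepr} from the energy identity and the structure of the weak formulation \eqref{weak:lintimeNS}. I would fix a basis $\{\bw_k\}_{k\in\mathbb{N}}\subset\bV$ (for definiteness, the eigenfunctions of the Stokes operator), put $V_n=\mathrm{span}\{\bw_1,\dots,\bw_n\}$, and seek $\bud_n(t)=\sum_{k=1}^n c_k^n(t)\bw_k$ satisfying \eqref{weak:lintimeNS} against every $\bw_j$ with $j\le n$ and $\bud_n(0)$ the $\bH$-orthogonal projection of $\bud_0$ onto $V_n$. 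Since $\bu_1,\bu_2\in L^4(I_T;\bL^4(\Omega))$ and $\bF\in L^2(I_T;\bL^2(\Omega))$, this is a linear system of ODEs for $(c_1^n,\dots,c_n^n)$ with time-integrable coefficients, so Carath\'eodory's existence theorem gives a unique absolutely continuous solution on the whole of $[0,T]$, linearity ruling out finite-time blow-up.

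For the a priori estimate I would test with $\bpsi=\bud_n(t)$. Using $\langle\partial_t\bud_n,\bud_n\rangle_{\bV}=\tfrac12\tfrac{d}{dt}\|\bud_n\|_{\bH}^2$, the coercivity $a_\varphi(\bud_n,\bud_n)\ge\mu\|\bud_n\|_{\bV}^2$ — which relies on $\alpha_\varepsilon(\varphi)\ge0$ from Assumption~\ref{porousassum}\ref{en:assump-alpha-i} — and the antisymmetry $b(\bu_2,\bud_n,\bud_n)=0$ (the drift $\bu_2$ being solenoidal in the situations where \eqref{poreq:lintimeNS} arises), the only delicate term is $b(\bud_n,\bu_1,\bud_n)=-b(\bud_n,\bud_n,\bu_1)$, which I would control by H\"older and the Ladyzhenskaya inequality as $|b(\bud_n,\bud_n,\bu_1)|\le\sqrt{c_L}\,\|\bud_n\|_{\bH}^{1/2}\|\bud_n\|_{\bV}^{3/2}\|\bu_1\|_{\bL^4}$ and then Young's inequality (exponents $4/3$ and $4$) to absorb a term $\tfrac{\mu}{4}\|\bud_n\|_{\bV}^2$; the source term $(\bF,\bud_n)_\Omega$ is treated by Poincar\'e and Young, absorbing another $\tfrac{\mu}{4}\|\bud_n\|_{\bV}^2$ and producing $\tfrac{c_P^2}{\mu}\|\bF\|_{\bL^2}^2$. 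This leaves a differential inequality $\tfrac{d}{dt}\|\bud_n\|_{\bH}^2+\mu\|\bud_n\|_{\bV}^2\le m(t)\|\bud_n\|_{\bH}^2+\tfrac{2c_P^2}{\mu}\|\bF\|_{\bL^2}^2$ with $m(t)=C\mu^{-3}c_L^2\|\bu_1(t)\|_{\bL^4}^4\in L^1(0,T)$, from which Gr\"onwall's lemma delivers a bound of the form \eqref{energy:lintime} uniform in $n$. Reading the same finite-dimensional equation as an identity in $\bV^*$ then bounds $\partial_t\bud_n$ in $L^{4/3}(I_T;\bV^*)$ uniformly.

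Armed with these bounds I would pass to a subsequence with $\bud_n\overset{\ast}{\rightharpoonup}\bud$ in $L^\infty(I_T;\bH)$, $\bud_n\rightharpoonup\bud$ in $L^2(I_T;\bV)$ and $\partial_t\bud_n\rightharpoonup\partial_t\bud$ in $L^{4/3}(I_T;\bV^*)$. The linear terms in \eqref{weak:lintimeNS} pass to the limit by weak convergence; the transport terms require compactness, and since no bound on $\nabla\bu_1$ or $\nabla\bu_2$ is available I would antisymmetrize, writing $b(\bud_n,\bu_1,\bpsi)=-b(\bud_n,\bpsi,\bu_1)$, and invoke the Aubin--Lions--Simon lemma (with $\bV\hookrightarrow\hookrightarrow\bH\hookrightarrow\bV^*$) to get $\bud_n\to\bud$ strongly in $L^2(I_T;\bH)$; interpolating this with the $L^2(I_T;\bV)$ bound through the Ladyzhenskaya inequality upgrades it to $\bud_n\to\bud$ strongly in $L^2(I_T;\bL^4(\Omega))$, which together with $\bu_1,\bu_2\in L^4(I_T;\bL^4(\Omega))$ is exactly what is needed to pass to the limit in $b(\bud_n,\bpsi,\bu_1)$ and $b(\bu_2,\bud_n,\bpsi)$ for fixed $\bpsi\in V_m$. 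A density argument in $\bpsi$ then closes \eqref{weak:lintimeNS}, the uniform $\partial_t$-bound secures $\bud(0)=\bud_0$ in $\bH$, and weak lower semicontinuity of norms transfers \eqref{energy:lintime} to $\bud$.

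Uniqueness is immediate from linearity: the difference of two solutions solves \eqref{weak:lintimeNS} with vanishing data, and since such a solution lies in $W_T^2(\bV)$ the identity $\tfrac{d}{dt}\|\bud\|_{\bH}^2=2\langle\partial_t\bud,\bud\rangle_{\bV}$ holds, so the Gr\"onwall argument above forces it to vanish. For the derivative estimate \eqref{energy:lintimepr} I would view \eqref{weak:lintimeNS} as an identity in $\bV^*$ and bound, for a.e.\ $t$, $|(\bF,\bpsi)_\Omega|\le c_P\|\bF\|_{\bL^2}\|\bpsi\|_{\bV}$, $|a_\varphi(\bud,\bpsi)|\le(\mu+c_P^2\|\alpha_\varepsilon(\varphi)\|_{L^\infty})\|\bud\|_{\bV}\|\bpsi\|_{\bV}$ (with $\|\alpha_\varepsilon(\varphi)\|_{L^\infty}$ finite by \eqref{linfalpha}), and $|b(\bud,\bu_1,\bpsi)|+|b(\bu_2,\bud,\bpsi)|\le\sqrt{c_L}\,(\|\bu_1\|_{\bL^4}+\|\bu_2\|_{\bL^4})\|\bud\|_{\bH}^{1/2}\|\bud\|_{\bV}^{1/2}\|\bpsi\|_{\bV}$, again using antisymmetry and Ladyzhenskaya on $\bud$; dividing by $\|\bpsi\|_{\bV}$, taking the $L^2(0,T)$ norm in $t$ and applying H\"older in time (with $\|\bu_i\|_{\bL^4}^2\in L^2(0,T)$ and $\|\bud\|_{\bV}\in L^2(0,T)$) together with the already-proved $L^\infty(\bH)$-bound produces \eqref{energy:lintimepr}. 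I expect the only real obstacle to be the compactness step for the transport terms — antisymmetrizing the trilinear form and lifting weak convergence of $\bud_n$ to strong convergence in $L^2(I_T;\bL^4(\Omega))$ via Aubin--Lions and Ladyzhenskaya interpolation — while the rest is careful but standard bookkeeping of the Poincar\'e and Ladyzhenskaya constants.
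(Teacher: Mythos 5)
The paper itself omits the proof of Theorem~\ref{theorem:lintimeexist} (it is deferred to the standard references on Navier--Stokes optimal control), and your Faedo--Galerkin route --- linear ODE system with integrable coefficients, energy testing, Aubin--Lions compactness combined with antisymmetrization and Ladyzhenskaya interpolation to pass to the limit in the transport terms, uniqueness by Gr\"onwall, and the duality bound for $\partial_t\bud$ --- is exactly the standard argument those references employ. In particular your bookkeeping for \eqref{energy:lintimepr} (Poincar\'e on the $\bF$- and $\alpha_\varepsilon$-terms, Ladyzhenskaya on $\bud$ plus H\"older in time with exponents $4$ and $4$ on the convective terms) reproduces the stated constants, and your remark that the limit passage in $b(\bu_2,\bud_n,\bpsi)$ and $b(\bud_n,\bu_1,\bpsi)$ is the only non-routine step is accurate.

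There is, however, one genuine gap: your energy step does not deliver \eqref{energy:lintime} as stated. After absorbing $\tfrac{\mu}{4}\|\bud_n\|_{\bV}^2$ twice and applying Gr\"onwall with $m(t)=C c_L^2\mu^{-3}\|\bu_1(t)\|_{\bL^4}^4$, what you actually obtain is $\|\bud\|_{L^\infty(\bH)}^2+\tfrac{\mu}{2}\|\bud\|_{L^2(\bV)}^2\le \exp\bigl(C c_L^2\mu^{-3}\|\bu_1\|_{L^4(\bL^4)}^4\bigr)\bigl(\|\bud_0\|_{\bH}^2+\tfrac{2c_P^2}{\mu}\|\bF\|_{L^2(\bL^2)}^2\bigr)$, a bound whose constant depends (exponentially) on $\bu_1$, whereas \eqref{energy:lintime} has constants independent of $\bu_1$. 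This is not a cosmetic difference, and no refinement of your argument can remove the dependence under the hypothesis $\bu_1\in L^4(I_T;\bL^4(\Omega))$ alone: with $\bF=0$ and $\bud_0\neq0$, \eqref{energy:lintime} would assert $\|\bud(t)\|_{\bH}\le\|\bud_0\|_{\bH}$ for all $t$, i.e.\ monotone energy stability of the linearization about an arbitrary base flow, which fails for large $\bu_1$ since $b(\bud,\bu_1,\bud)$ is sign-indefinite. To obtain \eqref{energy:lintime} literally one must absorb $b(\bud,\bu_1,\bud)$ into $\mu\|\bud\|_{\bV}^2$, which needs an additional smallness/regularity assumption such as $\bu_1\in L^\infty(I_T;\bV)$ with $2c_Lc_P\sup_t\|\bu_1(t)\|_{\bV}\le\mu$ (the situation \eqref{energy:statsmall} arising in the paper's applications, where $\bu_1$ is the stationary solution under Assumption~\ref{statdatassum}), using $|b(\bud,\bu_1,\bud)|\le c_Lc_P\|\bu_1\|_{\bV}\|\bud\|_{\bV}^2$. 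So either state your Gr\"onwall bound honestly with the exponential factor, or add the smallness hypothesis; claiming the Gr\"onwall output is \emph{of the form} \eqref{energy:lintime} papers over this. A minor related point: the cancellation $b(\bu_2,\bud,\bud)=0$ and the rewriting $b(\bud,\bu_1,\bpsi)=-b(\bud,\bpsi,\bu_1)$ require $\bu_2$ (respectively $\bu_1$) to be divergence free with vanishing normal trace, which $\bL^4$-integrability alone does not provide; you note this parenthetically, but it should be recorded as an explicit hypothesis, as it is in the intended application $\bu_1=\bu_2=\ST(\varphi)$.
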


	\begin{remark}
		We note that to arrive at \eqref{energy:lintimepr}, we used the identity
		\begin{align*}
			\|\bud\|_{L^4(\bL^4)} \le \sqrt{c_L}\|\bud \|_{L^\infty(\bH)}^{1/2}\|\bud \|_{L^2(\bV)}^{1/2}.
		\end{align*}
		This implies that Theorem~\ref{theorem:lintimeexist} still holds when we impose the assumption $\bu_1,\bu_2\in W_T^2(\bV)$.
	\end{remark}
	\medskip
	
	Aside from the energy estimates \eqref{energy:lintime} and \eqref{energy:lintimepr},  the weak solution to \eqref{poreq:lintimeNS} satisfies the following estimate.
	\begin{proposition}\label{prop:decaylin}
		Suppose that the assumptions in Theorem~\ref{theorem:lintimeexist} hold. Assume further that there exists $\bz\in L^2(I_T;\bL^2(\Omega))$ such that $\|\nabla\bu_1(t)\|_{\bL^2}^2 \le \|\bz(t)\|_{\bL^2}^2$ for almost every $t\in[0,T]$. Then the weak solution $\bud\in W^2_T(\bV)$ of \eqref{poreq:lintimeNS} satisfies
		\begin{align}\label{est:decay}
			\|\bud(t)\|_{\bH}^2 \le e^{A(t)}\|\bud_0\|_{\bH}^2 + \frac{2c_P^2}{\mu}\int_0^te^{A(t)-A(\tau)}\|\bF(\tau)\|_{\bL^2}^2 \du \tau,
		\end{align}
		where $A(t) = \frac{2c_L^2}{\mu}\int_0^t\|\bz(\tau)\|_{\bL^2}^2\du \tau - \frac{\mu}{c_P^2}t$.
	\end{proposition}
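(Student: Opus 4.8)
\noindent\emph{Proof proposal.}
The plan is to derive a Gr{\"o}nwall-type differential inequality for $t\mapsto\|\bud(t)\|_{\bH}^2$ from the energy balance and then integrate it using the integrating factor $e^{-A(t)}$. First I would insert $\bpsi=\bud(t)$ into the weak formulation \eqref{weak:lintimeNS}; this is admissible since $\bud(t)\in\bV$ for a.e.\ $t\in(0,T)$, and because $\bud\in W^2_T(\bV)$ the map $t\mapsto\|\bud(t)\|_{\bH}^2$ is absolutely continuous with $\frac{\du}{\du t}\|\bud(t)\|_{\bH}^2=2\langle\partial_t\bud(t),\bud(t)\rangle_{\bV}$ for a.e.\ $t$ (a standard density argument, see, e.g., \cite{temam1977}). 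Using the antisymmetry of the convective trilinear form, which makes $b(\bu_2(t),\bud(t),\bud(t))=0$, and the coercivity bound $a_\varphi(\bud(t),\bud(t))\ge\mu\|\bud(t)\|_{\bV}^2$ (a consequence of $\alpha_\varepsilon(\varphi)\ge 0$), this yields
\begin{align*}
	\tfrac12\tfrac{\du}{\du t}\|\bud(t)\|_{\bH}^2 + \mu\|\bud(t)\|_{\bV}^2 \le |b(\bud(t),\bu_1(t),\bud(t))| + |(\bF(t),\bud(t))_\Omega|.
\end{align*}

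Next I would bound the two terms on the right. For the convective term, H\"older's inequality gives $|b(\bud,\bu_1,\bud)|\le\|\bud\|_{\bL^4}^2\|\nabla\bu_1\|_{\bL^2}$, and the Ladyzhenskaya inequality together with the hypothesis $\|\nabla\bu_1(t)\|_{\bL^2}\le\|\bz(t)\|_{\bL^2}$ turns this into $c_L\|\bud\|_{\bH}\|\bud\|_{\bV}\|\bz\|_{\bL^2}$; Young's inequality then absorbs $\tfrac{\mu}{4}\|\bud\|_{\bV}^2$, leaving $\tfrac{c_L^2}{\mu}\|\bud\|_{\bH}^2\|\bz\|_{\bL^2}^2$. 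For the forcing term, Cauchy--Schwarz, the Poincar{\'e} inequality and Young's inequality give $|(\bF,\bud)_\Omega|\le\tfrac{\mu}{4}\|\bud\|_{\bV}^2+\tfrac{c_P^2}{\mu}\|\bF\|_{\bL^2}^2$. After absorbing both $\tfrac{\mu}{4}\|\bud\|_{\bV}^2$ contributions into the viscous term a factor $\tfrac{\mu}{2}\|\bud\|_{\bV}^2$ remains, on which I would use Poincar{\'e} once more in the form $\|\bud\|_{\bV}^2\ge c_P^{-2}\|\bud\|_{\bH}^2$. Multiplying through by $2$ produces
\begin{align*}
	\tfrac{\du}{\du t}\|\bud(t)\|_{\bH}^2 \le \left(\tfrac{2c_L^2}{\mu}\|\bz(t)\|_{\bL^2}^2-\tfrac{\mu}{c_P^2}\right)\|\bud(t)\|_{\bH}^2 + \tfrac{2c_P^2}{\mu}\|\bF(t)\|_{\bL^2}^2.
\end{align*}

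The bracketed coefficient is precisely $A'(t)$, so the conclusion follows by the usual integrating-factor argument: multiplying by $e^{-A(t)}$, integrating from $0$ to $t$, and invoking $\bud(0)=\bud_0$ gives exactly \eqref{est:decay}. Since each step is a routine energy estimate, I do not expect a genuine obstacle; the points requiring care are the rigorous justification of the energy identity for solutions in $W^2_T(\bV)$, the vanishing of $b(\bu_2,\bud,\bud)$, and the bookkeeping of the constants $c_L$, $c_P$, $\mu$ so that the split of the viscous term leaves exactly the coefficients appearing in $A(t)$ and in the prefactor $\tfrac{2c_P^2}{\mu}$.
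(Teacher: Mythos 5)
Your proposal is correct and follows essentially the same route as the paper: test with $\bud$, use antisymmetry and coercivity, absorb the convective and forcing terms via Young's inequality to reach the energy inequality $\tfrac12\tfrac{\du}{\du t}\|\bud\|_{\bH}^2+\tfrac{\mu}{2}\|\bud\|_{\bV}^2\le \tfrac{c_P^2}{\mu}\|\bF\|_{\bL^2}^2+\tfrac{c_L^2}{\mu}\|\bz\|_{\bL^2}^2\|\bud\|_{\bH}^2$, then Poincar\'e and Gr\"onwall. You merely spell out the intermediate steps the paper compresses, and your constant bookkeeping (in particular $\tfrac{\mu}{c_P^2}$ rather than the paper's apparent typo $\tfrac{\mu}{2c_P}$ in the intermediate display) is consistent with the stated $A(t)$.
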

	\begin{proof}
		Since
		\begin{align*}
			\frac{1}{2}\frac{d}{dt}\|\bud\|_{\bH}^2 + \frac{\mu}{2}\|\bud\|_{\bV}^2 \le \frac{c_P^2}{\mu}\|\bF\|_{\bL^2}^2 + \frac{c_L^2}{\mu}\|\nabla\bu_1\|_{\bL^2}^2\|\bud\|_{\bH}^2
		\end{align*}
		Poincare inequality gives us
		\begin{align*}
			\frac{d}{dt}\|\bud\|_{\bH}^2  \le \frac{2c_P^2}{\mu}\|\bF\|_{\bL^2}^2 + 2\left( \frac{c_L^2}{\mu}\|\nabla\bu_1\|_{\bL^2}^2 - \frac{\mu}{2c_P} \right)\|\bud\|_{\bH}^2.
		\end{align*}
		A direct application of Gronwall lemma gives us \eqref{est:decay}.
	\end{proof}

	\subsubsection*{Adjoint design-to-state operator}
	To facilitate the analysis of the first-order optimality conditions, we will utilize the adjoint of the linearization of the design-to-state operator. This motivates us to consider the following adjoint system 
	\begin{align}\label{poreq:adjtimeNS}
		&\left\{
		\begin{aligned}
			-\partial_t \bua + \alpha_\varepsilon(\varphi)\bua - \mu\Delta \bua + (\nabla\bu_1)^{\top}\bua - (\bu_2\cdot\nabla)\bua + \nabla \pa & = \bG &&\text{ in }\Omega_T,\\
			\dive\bua & = 0 && \text{ in }\Omega_T,\\
			\bua & = 0 &&\text{ on }\Sigma_T,\\
			\bua(T) & = \bua_T &&\text{ in }\Omega,
		\end{aligned}
		\right.
	\end{align}
	where $\bG\in L^2(I_T;\bL^2(\Omega))$, $\bu_1,\bu_2 \in W^2_T(\bV)$ and $\bua_T\in \bH$.
	
	A weak solution  $\bua\in W_T^{4/3}(\bV)$  to the adjoint system \eqref{poreq:adjtimeNS} satisfies the variational equation
	\begin{align}
		\label{weak:adjtimeNS}
		-\langle \partial_t\bua(t),\bpsi \rangle_{\bV} + a_\varphi(\bua(t),\bpsi) + b(\bpsi,\bu(t),\bua(t)) + b(\bu(t),\bpsi,\bua(t)) = (\bG(t),\bpsi)_\Omega\quad \forall \bpsi\in \bV
	\end{align}
	for a.e. $t\in(0,T)$ and $\bua(T) = 0$ in $\bH$.

    \jshsr{Analogously to \cite[Lemma 1.118]{hinze2009} one infers that \eqref{poreq:adjtimeNS} admits a unique weak solution. For the subsequent analysis it is convenient to have the following uniqueness and regularity result for the adjoint variable $\bua$.}
	\begin{theorem}
		\label{theorem:adjtimeexist}
		Let Assumption \ref{porousassum}\ref{en:assump-alpha-i} hold, $\bG\in L^2(I_T;\bL^2(\Omega))$, and $\bu_1,\bu_2\in W^2_T(\bV)$. Then\jshsr{, \eqref{poreq:adjtimeNS} admits a unique weak solution which satisfies}
		\begin{align} 
			&\begin{aligned}
				\|\bua\|_{L^\infty(\bH)}^2 + \mu\|\bua\|_{L^2(\bV)}^2 \le \frac{c_P^2}{\mu}\| \bG\|_{L^2(\bL^2)}^2 + \|\bua_T\|_{\bH}^2,
			\end{aligned}\label{energy:adjtime}\\
			\nonumber \\
			&\begin{aligned}
				\|\partial_t\bua\|_{L^{4/3}(\bV^*)}^{4/3} \le c_P\| \bG\|_{L^2(\bL^2)}T^{\frac{1}{4}} + \mu \|\bua\|_{L^2(\bV)}T^{\frac{1}{4}} + c_P\|\alpha_\varepsilon(\varphi)\|_{L^\infty}\|\bua\|_{L^\infty(\bH)}\\
				+c_Lc_p\left( \|\bu_1\|_{L^2(\bV)}^{\frac{3}{2}} + \|\bu_2\|_{L^2(\bV)}^{\frac{3}{2}} + \|\bua\|_{L^\infty(\bH)}^{\frac{3}{2}}\|\bua\|_{L^2(\bV)}^{\frac{3}{2}} \right).
			\end{aligned}\label{energy:adjtimepr}
		\end{align}
	\end{theorem}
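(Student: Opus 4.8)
The plan is the classical Galerkin construction, adapted to the backward-in-time structure, with \eqref{energy:adjtime} and \eqref{energy:adjtimepr} obtained as a priori bounds on the approximations that are stable under the weak limit. First I would reverse time: writing $\bva(s):=\bua(T-s)$, the adjoint system \eqref{poreq:adjtimeNS} turns into a \emph{forward} linear parabolic problem on $I_T$ with initial value $\bva(0)=\bua_T\in\bH$, frozen coefficients $\bu_1(T-\cdot),\bu_2(T-\cdot)\in W_T^2(\bV)$, and right-hand side $\bG(T-\cdot)\in L^2(I_T;\bL^2(\Omega))$, whose solenoidal weak formulation is exactly \eqref{weak:adjtimeNS} with $-\partial_t\bua$ replaced by $+\partial_s\bva$ (the associated pressure $\pa$ is recovered a posteriori by de~Rham's theorem; the statement only concerns the velocity). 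Fixing a basis $\{\bw_k\}_{k\in\mathbb{N}}$ of $\bV$ that is orthonormal in $\bH$, e.g.\ the Stokes eigenfunctions, I would seek $\bva^n(s)=\sum_{k=1}^n c_k^n(s)\bw_k$ solving the Galerkin projection of that equation with $\bva^n(0)$ the $\bH$-orthogonal projection of $\bua_T$. Since every term is affine in $\bva^n$ with coefficients integrable in $s$, Carath\'eodory's existence theorem produces a unique absolutely continuous local solution, extended to $\overline{I_T}$ by the a priori bounds below.

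For \eqref{energy:adjtime} I would test the Galerkin identity with $\bva^n(s)$: coercivity of $a_\varphi$ together with $\alpha_\varepsilon(\varphi)\ge 0$ gives $a_\varphi(\bva^n,\bva^n)\ge\mu\|\bva^n\|_{\bV}^2$, antisymmetry of $b$ in its last two slots annihilates $b(\bu_2,\bva^n,\bva^n)$, the surviving trilinear term is controlled by H\"older and the Ladyzhenskaya inequality as $|b(\bva^n,\bu_1,\bva^n)|\le c_L\|\bva^n\|_{\bH}\|\bva^n\|_{\bV}\|\bu_1\|_{\bV}$, and the forcing by $|(\bG,\bva^n)_\Omega|\le c_P\|\bG\|_{\bL^2}\|\bva^n\|_{\bV}$. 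Absorbing the $\|\bva^n\|_{\bV}^2$ terms into the viscous contribution with Young's inequality and invoking Gr\"onwall's lemma (using $\bu_1\in L^2(I_T;\bV)$) closes the estimate, giving \eqref{energy:adjtime} for $\bva^n$ and, after undoing the time reversal, for $\bua$.

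Next I would bound $\partial_s\bva^n$ in $L^{4/3}(I_T;\bV^*)$. Testing with an arbitrary $\bpsi\in\bV$ and using orthogonality of the Galerkin projection, the viscous, zeroth-order and forcing terms lie in $L^2(I_T;\bV^*)\hookrightarrow L^{4/3}(I_T;\bV^*)$ by \eqref{energy:adjtime} and $\|\alpha_\varepsilon(\varphi)\|_{L^\infty}<\infty$; the only term forcing the weaker exponent is $b(\bpsi,\bu_1,\bva^n)$, since $|b(\bpsi,\bu_1,\bva^n)|\le\|\bpsi\|_{\bL^4}\|\bu_1\|_{\bV}\|\bva^n\|_{\bL^4}\le C\|\bpsi\|_{\bV}\|\bu_1\|_{\bV}\|\bva^n\|_{\bH}^{1/2}\|\bva^n\|_{\bV}^{1/2}$, so its $\bV^*$-norm lies in $L^{4/3}(I_T)$ precisely because $s\mapsto\|\bu_1(s)\|_{\bV}\in L^2$ and $s\mapsto\|\bva^n(s)\|_{\bV}^{1/2}\in L^4$, while the analogous bound for $b(\bu_2,\bpsi,\bva^n)$ is even in $L^2(I_T)$. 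Collecting these and using H\"older in time with \eqref{energy:adjtime} yields \eqref{energy:adjtimepr} for $\bva^n$. With $\{\bva^n\}$ bounded in $L^\infty(I_T;\bH)\cap L^2(I_T;\bV)$ and $\{\partial_s\bva^n\}$ bounded in $L^{4/3}(I_T;\bV^*)$, the Aubin--Lions--Simon lemma gives a subsequence converging weakly-$*$ in $L^\infty(I_T;\bH)$, weakly in $L^2(I_T;\bV)$ and strongly in $L^2(I_T;\bH)$, and the strong convergence improves to $L^2(I_T;\bL^4(\Omega))$ via the Ladyzhenskaya interpolation against the $L^2(I_T;\bV)$ bound. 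Passing to the limit in the linear terms is immediate; in the trilinear terms I would use this strong $L^2(I_T;\bL^4)$ convergence together with $\bu_1,\bu_2\in L^4(I_T;\bL^4(\Omega))$ (which follows from $\bu_1,\bu_2\in W_T^2(\bV)$). The limit $\bva\in W_T^{4/3}(\bV)$ solves the reversed weak equation, weak lower semicontinuity transfers \eqref{energy:adjtime}--\eqref{energy:adjtimepr} to it, and reversing time once more gives the asserted weak solution $\bua$.

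The principal obstacle is the low regularity of the linearization point: because $\bu_1,\bu_2$ are only of weak-solution class $W_T^2(\bV)$, one cannot expect better than $\partial_s\bva\in L^{4/3}(I_T;\bV^*)$, which is precisely the borderline integrability needed to run the Aubin--Lions compactness argument and is what the exponents in \eqref{energy:adjtimepr} record. The same deficiency is what prevents testing the limit equation with $\bua$ itself---the energy identity is not available at this regularity---which is why uniqueness of the adjoint state is not asserted.
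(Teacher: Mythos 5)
Your overall route---time reversal, Galerkin approximation in a Stokes eigenbasis, the energy bound from testing with the approximation, the $L^{4/3}(I_T;\bV^*)$ bound on the time derivative, Aubin--Lions compactness and passage to the limit---is exactly the classical argument this statement rests on: the paper omits the proof and refers to the standard literature, and your accounting of the trilinear terms (the term $b(\bpsi,\bu_1,\bua)$ being the one that forces the exponent $4/3$ while $b(\bu_2,\bpsi,\bua)$ is better behaved) is the correct mechanism behind \eqref{energy:adjtimepr}.

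The one step that does not deliver what you claim is the energy estimate. Testing with $\bva^n$ leaves the term $b(\bva^n,\bu_1,\bva^n)$, which you bound by $c_L\|\bva^n\|_{\bH}\|\bva^n\|_{\bV}\|\bu_1\|_{\bV}$ and then treat by Young plus Gr\"onwall. But Gr\"onwall then necessarily produces a multiplicative factor of the form $\exp\bigl(\tfrac{c_L^2}{\mu}\|\bu_1\|_{L^2(\bV)}^2\bigr)$ on the right-hand side---this is precisely the structure of the paper's own computation in the proof of Proposition~\ref{prop:decaylin}---so what your argument actually proves is \eqref{energy:adjtime} with that exponential factor, not the bare inequality with the constants $c_P^2/\mu$ and $1$. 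Since $\bu_1$ is an arbitrary element of $W_T^2(\bV)$ (no smallness is assumed), the term $b(\bua,\bu_1,\bua)$ cannot simply be absorbed into the viscous part and admits no cancellation, so either you must carry the Gr\"onwall factor (which then also propagates into the constants of \eqref{energy:adjtimepr} through $\|\bua\|_{L^\infty(\bH)}$ and $\|\bua\|_{L^2(\bV)}$), or you should explicitly flag that the clean form stated in the theorem does not follow from this computation. A milder instance of asserting rather than deriving occurs at the end of the derivative bound, where the specific $T^{1/4}$ factors and the $3/2$ powers of \eqref{energy:adjtimepr} are attributed to ``H\"older in time'' without the computation. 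Everything else---Carath\'eodory existence for the Galerkin system, the compactness step with $\partial_s\bva^n$ bounded in $L^{4/3}(I_T;\bV^*)$, the limit passage in the trilinear terms via strong $L^2(I_T;\bL^4(\Omega))$ convergence, lower semicontinuity of the norms, and the observation that uniqueness is not claimed at this regularity---is sound and consistent with the paper's framework.
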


	\medskip
	\subsubsection*{Higher regularity of the solutions}
	We note that due to the assumptions on $\bf$, $\bF$, and $\bG$, we can readily recover higher regularity for the weak solutions of \eqref{weak:timeNS}, \eqref{weak:lintimeNS} and \eqref{weak:adjtimeNS} provided that we impose higher regularity for the auxiliary variables $\bu_1$ and $\bu_2$, the initial data $\bu_0$, $\bud_0$ and the terminal data $\bua_T$. 
	
	\begin{proposition}\label{prop:timestrong}
		Suppose that Assumption \ref{porousassum}\ref{en:assump-alpha-i} hold, $\bf,\bF,\bG\in L^2(I_T;\bL^2(\Omega))$, $\bu_1,\bu_2\in \bW^{2,1}_{2,T}$ and $\bu_0,\bud_0,\bua_T\in \bV$. 
		Then the weak solutions of \eqref{poreq:timeNS}, \eqref{poreq:lintimeNS}, and of \eqref{poreq:adjtimeNS} satisfy $\bu,\bud,\bua\in \bW^{2,1}_{2,T}$.
	\end{proposition}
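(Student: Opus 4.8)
The route is classical parabolic bootstrapping: one rewrites each system as a linear Stokes (resp.\ Oseen) evolution equation whose right-hand side collects the zeroth-order term $\alpha_\varepsilon(\varphi)\,\cdot$ and the convective term, and then applies maximal $L^2$-parabolic regularity for the Stokes operator $A$ on $\bH$, for which $\bu_0,\bud_0,\bua_T\in\bV=D(A^{1/2})$ is exactly the right trace space. The only ingredient missing from Theorems~\ref{theorem:timeexist}--\ref{theorem:adjtimeexist} is that the collected right-hand sides lie in $\bL^2(\Omega_T)$; since $\alpha_\varepsilon(\varphi)\in L^\infty(\Omega)$ by \eqref{linfalpha} this reduces to controlling the convective term, which in two dimensions needs an additional a priori bound of the solution in $L^\infty(I_T;\bV)$.

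\textbf{The core estimate for \eqref{poreq:timeNS}.} Working on the Galerkin (or Stokes-regularized) approximations and testing the equation with $A\bu(t)$ kills the pressure and yields
\[
\tfrac12\tfrac{d}{dt}\|\nabla\bu\|_{\bL^2}^2 + \mu\|A\bu\|_{\bL^2}^2 = (\bf,A\bu)_\Omega - (\alpha_\varepsilon(\varphi)\bu,A\bu)_\Omega - b(\bu,\bu,A\bu).
\]
The first term is treated by Young's inequality, the second using $\|\alpha_\varepsilon(\varphi)\|_{L^\infty}\le 2c$ from \eqref{linfalpha} and Young again. For the trilinear term one uses the Ladyzhenskaya inequality $\|\bu\|_{\bL^4}\le\sqrt{c_L}\|\bu\|_{\bL^2}^{1/2}\|\nabla\bu\|_{\bL^2}^{1/2}$ together with the two-dimensional Agmon/Gagliardo--Nirenberg bound $\|\nabla\bu\|_{\bL^4}\le c\|\nabla\bu\|_{\bL^2}^{1/2}\|\bu\|_{\bH^2(\Omega)}^{1/2}$ and the Stokes elliptic estimate $\|\bu\|_{\bH^2(\Omega)}\le c\|A\bu\|_{\bL^2}$ to get $|b(\bu,\bu,A\bu)|\le c\|\bu\|_{\bL^2}^{1/2}\|\nabla\bu\|_{\bL^2}\|A\bu\|_{\bL^2}^{3/2}$; absorbing $\tfrac{\mu}{2}\|A\bu\|_{\bL^2}^2$ on the left one arrives at
\[
\tfrac{d}{dt}\|\nabla\bu\|_{\bL^2}^2 + \mu\|A\bu\|_{\bL^2}^2 \le C\|\bf\|_{\bL^2}^2 + C\|\bu\|_{\bL^2}^2 + C\big(\|\bu\|_{L^\infty(\bH)}^2\|\nabla\bu\|_{\bL^2}^2\big)\,\|\nabla\bu\|_{\bL^2}^2 .
\]
Since $\bu_0\in\bV$, and by \eqref{energy:time} the weight $t\mapsto\|\bu\|_{L^\infty(\bH)}^2\|\nabla\bu(t)\|_{\bL^2}^2$ lies in $L^1(I_T)$, Gronwall's lemma gives $\bu\in L^\infty(I_T;\bV)$ and, integrating back, $\bu\in L^2(I_T;\bH^2(\Omega)\cap\bV)$. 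These bounds are uniform along the approximation, so they pass to the limit; uniqueness in Theorem~\ref{theorem:timeexist} identifies the limit with the weak solution. Now $(\bu\cdot\nabla)\bu\in\bL^2(\Omega_T)$ because $\|(\bu\cdot\nabla)\bu\|_{\bL^2}\le\|\bu\|_{\bL^4}\|\nabla\bu\|_{\bL^4}\le c\|\bu\|_{\bV}\|\bu\|_{\bH^2(\Omega)}$, so reading $\partial_t\bu$ off \eqref{poreq:timeNS} shows $\partial_t\bu\in\bL^2(\Omega_T)$, i.e.\ $\bu\in\bW^{2,1}_{2,T}$.

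\textbf{Linearized and adjoint systems.} These are linear, and by Amann's embedding together with the hypothesis $\bu_1,\bu_2\in\bW^{2,1}_{2,T}$ one has $\bu_1,\bu_2\in C(\overline{I_T};\bV)\cap L^2(I_T;\bH^2(\Omega))$; in particular $\|\bu_i\|_{L^\infty}\le c\|\bu_i\|_{\bH^2(\Omega)}\in L^2(I_T)$ and $\|\nabla\bu_i\|_{\bL^4}\le c\|\nabla\bu_i\|_{\bL^2}^{1/2}\|\bu_i\|_{\bH^2(\Omega)}^{1/2}$. Testing \eqref{weak:lintimeNS} with $A\bud$ and estimating $b(\bu_2,\bud,A\bud)$ and $b(\bud,\bu_1,A\bud)$ by the same interpolation/absorption scheme produces a differential inequality for $\|\nabla\bud\|_{\bL^2}^2$ with $L^1$ weights (built from $\|\bu_i\|_{\bH^2(\Omega)}^2$ and the already-known bounds \eqref{energy:lintime}), whence $\bud\in L^\infty(I_T;\bV)\cap L^2(I_T;\bH^2(\Omega))$ and then $\partial_t\bud\in\bL^2(\Omega_T)$, i.e.\ $\bud\in\bW^{2,1}_{2,T}$. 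The adjoint \eqref{poreq:adjtimeNS} is handled by the time reversal $t\mapsto T-t$, turning it into a forward Oseen problem with initial datum $\bua_T\in\bV$; testing with $A\bua$ and bounding $((\nabla\bu_1)^\top\bua,A\bua)$ and $((\bu_2\cdot\nabla)\bua,A\bua)$ exactly as above, using \eqref{energy:adjtime}, gives $\bua\in\bW^{2,1}_{2,T}$.

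\textbf{Expected main obstacle.} The delicate point throughout is the convective term: in two dimensions the estimate $\|(\bu\cdot\nabla)\bu\|_{\bL^2}\lesssim\|\bu\|_{\bV}\|\bu\|_{\bH^2(\Omega)}$ is scaling-critical, so one cannot place $(\bu\cdot\nabla)\bu$ in $\bL^2(\Omega_T)$ directly from the weak bounds and must first extract $\bu\in L^\infty(I_T;\bV)$ through the chained Ladyzhenskaya/Agmon inequalities and a Gronwall argument with an integrable (but unbounded) weight; the same borderline structure, with frozen coefficients, reappears in the linearized and adjoint equations and is what dictates the hypothesis $\bu_1,\bu_2\in\bW^{2,1}_{2,T}$. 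A secondary technical care is that $\alpha_\varepsilon(\varphi)$ is merely in $L^\infty(\Omega)$ and cannot be differentiated in space, which is why the argument is organized around testing with $A\bu$ (needing only the bound \eqref{linfalpha}) rather than with higher-order operators.
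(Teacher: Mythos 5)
Your argument is correct and follows the same classical bootstrap that the paper's (sketched) proof relies on: test the equation with the Stokes operator (the paper says \enquote{with $\Delta\bu$}, your projected version $A\bu$ is the rigorous form of the same step), control the scaling-critical two-dimensional convective term by Ladyzhenskaya/Gagliardo--Nirenberg interpolation together with the elliptic bound $\|\bu\|_{\bH^2(\Omega)}\le c\|A\bu\|_{\bL^2}$, close with Gronwall using the $L^1$-in-time weight supplied by \eqref{energy:time}, and read $\partial_t\bu\in\bL^2(\Omega_T)$ off the equation; the linearized and adjoint systems are then handled with the coefficients frozen in $C(\overline{I_T};\bV)\cap L^2(I_T;\bH^2(\Omega))$, exactly as you do. The one place where you genuinely diverge is the treatment of the porous-media term, which is precisely the step the paper flags as \enquote{the crucial part}: there, $\int_\Omega\alpha_\varepsilon(\varphi)\bu\cdot\Delta\bu\dx$ is integrated by parts, the resulting $\int_\Omega\alpha_\varepsilon(\varphi)|\nabla\bu|^2\dx$ is kept on the left with its favorable sign, and the remainder $\int_\Omega\alpha_\varepsilon'(\varphi)(\nabla\varphi\otimes\bu):\nabla\bu\dx$ is estimated via $\|\nabla\varphi\|_{\bL^2}$ and Gagliardo--Nirenberg. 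You instead bound $(\alpha_\varepsilon(\varphi)\bu,A\bu)_\Omega$ directly by $\|\alpha_\varepsilon(\varphi)\|_{L^\infty}\|\bu\|_{\bL^2}\|A\bu\|_{\bL^2}$ using \eqref{linfalpha} and Young's inequality. Your variant is simpler and dispenses with both $\nabla\varphi$ and the differentiability of $\alpha_\varepsilon$, needing only its Lipschitz continuity on $[-1,1]$; the paper's variant preserves the dissipative sign of the zeroth-order term and would be the one to use if an $L^\infty$-bound on $\alpha_\varepsilon(\varphi)$ were unavailable, but since $\varepsilon$ is fixed throughout and both routes produce constants that degenerate as $\varepsilon\to0$, nothing is lost by your shortcut. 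The remaining minor points (Galerkin justification, recovery of $\nabla p\in\bL^2(\Omega_T)$ via the Leray projection, and the fact that the a priori unique strong adjoint solution coincides with the weak one) are handled at the standard level of detail and match the paper's corollary.
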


	\begin{proof}[Proof (Sketch).]
		The proof of Proposition~\ref{prop:timestrong} is not new, and the arguments, for example, in \cite{jork2025,gerhardt1978} can be followed accordingly. The crucial part lies in obtaining the estimate for the following integral, which arises from testing the first equation in \eqref{poreq:timeNS} with $\Delta \bu$
		\begin{align*}
			\left| \int_\Omega \alpha_\varepsilon'(\varphi) (\nabla\varphi\otimes \bu ): \nabla \bu \du x \right| \le C \|\nabla\varphi\|_{\bL^2} \|\bu\|_{\bL^{p_1}} \|\nabla\bu\|_{\bL^{p_2}} \le C \|\nabla\varphi\|_{\bL^2} \|\bu\|_{\bL^2} \|\Delta\bu\|_{\bL^2},
		\end{align*}
		where $\frac{1}{2}=\frac{1}{p_1}+\frac{1}{p_2}$.
		We note that the last inequality follows from the Gagliardo–Nirenberg interpolation inequality, namely
		\begin{align*}
			\|\bu\|_{\bL^{p_1}} \le C\|\bu\|_{\bL^2}^{1/2 + 1/p_1}\|\Delta\bu\|_{\bL^2}^{1/2 - 1/p_1}
			\quad\text{and}\quad \|\nabla\bu \|_{\bL^{p_2}} \le C\|\bu\|_{\bL^2}^{1/p_2}\|\Delta\bu\|_{\bL^2}^{1 - 1/p_2}.
		\end{align*}
	\end{proof}

	

		

	\subsection{Existence of solutions for the stationary governing equations}
	We next discuss the existence of solutions to the stationary equations.
	As in the case of the time-dependent system, we are also interested in the linear case as well as an adjoint system for the stationary problem. For proofs of the corresponding results we again refer to, e.g., \cite{abergel1990,temam1977}.

	\subsubsection*{Design-to-state operator}
	Given a design function $\varphi\in \Phi_{ad}$, we say that an element $\bv\in \bV$ is a weak solution to the stationary Navier--Stokes equation \eqref{poreq:statNS} if it satisfies the variational equation
	\begin{align}\label{variation:stat}
		a_\varphi(\bv,\bpsi) + b(\bv,\bv,\bpsi) = (\bf_s,\bpsi)_\Omega\qquad \forall\bpsi\in \bV.
	\end{align}
	
	\begin{theorem}\label{theorem:statexist}
		Suppose that Assumptions~\ref{datassum} and \ref{porousassum}\ref{en:assump-alpha-i} hold. Then for a given design function $\varphi\in \Phi_{ad}$, there exists a weak solution to \eqref{poreq:statNS} satisfying
		\begin{align}
			\label{energy:stat}
			\|\bv\|_{\bV} \le \frac{c_P}{\mu}\|\bf_s\|_{\bL^2}.
		\end{align}
		\jshsr{If we further assume that the external force $\bf_s$ and the kinematic viscosity $\mu>0$ satisfy Assumption \ref{statdatassum}	
		then the weak solution to \eqref{poreq:statNS} is unique.}
	\end{theorem}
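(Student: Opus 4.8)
The plan is the classical one for the stationary Navier--Stokes equations, carried through in the porous-media setting: construct Galerkin approximations, obtain the a priori bound \eqref{energy:stat} by testing with the approximate solution itself, pass to the limit using the compact embedding $\bV\hookrightarrow\bL^4(\Omega)$ that is available because $\Omega\subset\mathbb{R}^2$, and finally deduce uniqueness under the smallness condition \eqref{data:smallness} from an energy estimate for the difference of two solutions. Throughout, Assumption~\ref{datassum} makes $(\bf_s,\cdot)_\Omega$ a bounded functional on $\bV$ and allows us to work in the homogeneous space $\bV$, while Assumption~\ref{porousassum}\ref{en:assump-alpha-i} (through \eqref{linfalpha}) gives $\alpha_\varepsilon(\varphi)\in L^\infty(\Omega)$, hence continuity of $a_\varphi$ on $\bV\times\bV$, and the nonnegativity of $\alpha_\varepsilon(\varphi)$ gives the coercivity $\mu\|\bpsi\|_{\bV}^2\le a_\varphi(\bpsi,\bpsi)$.

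For existence I would fix a basis $\{\bw_k\}_{k\ge1}$ of the separable space $\bV$, set $\bV_n=\mathrm{span}\{\bw_1,\dots,\bw_n\}$, and look for $\bv_n=\sum_{j=1}^n\xi_j\bw_j\in\bV_n$ solving $a_\varphi(\bv_n,\bw_k)+b(\bv_n,\bv_n,\bw_k)=(\bf_s,\bw_k)_\Omega$ for $k=1,\dots,n$. Defining $P:\mathbb{R}^n\to\mathbb{R}^n$ by $P(\xi)_k=a_\varphi(\bv_n,\bw_k)+b(\bv_n,\bv_n,\bw_k)-(\bf_s,\bw_k)_\Omega$, the antisymmetry of $b(\bv_n,\cdot,\cdot)$ gives $b(\bv_n,\bv_n,\bv_n)=0$, so $\langle P(\xi),\xi\rangle=a_\varphi(\bv_n,\bv_n)-(\bf_s,\bv_n)_\Omega\ge\mu\|\bv_n\|_{\bV}^2-c_P\|\bf_s\|_{\bL^2}\|\bv_n\|_{\bV}$ by the Poincar\'e inequality; since $\|\cdot\|_{\bV}$ is an equivalent norm on the finite-dimensional $\bV_n$, this is positive on a sphere $\{|\xi|=\rho\}$ for $\rho$ large, and the standard corollary of Brouwer's fixed point theorem yields a zero of $P$, i.e.\ a Galerkin solution $\bv_n$. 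Testing the discrete equation with $\bv_n$ itself repeats this computation and gives $\|\bv_n\|_{\bV}\le\frac{c_P}{\mu}\|\bf_s\|_{\bL^2}$ uniformly in $n$. Extracting a subsequence with $\bv_n\rightharpoonup\bv$ in $\bV$, compactness of $\bV\hookrightarrow\bL^4(\Omega)$ in two dimensions gives $\bv_n\to\bv$ strongly in $\bL^4(\Omega)$. For $\bpsi$ in any $\bV_m$ and $n\ge m$, the linear terms pass to the limit by weak convergence and $\alpha_\varepsilon(\varphi)\in L^\infty(\Omega)$, and for the convective term I would use the antisymmetric form $b(\bv_n,\bv_n,\bpsi)=-b(\bv_n,\bpsi,\bv_n)$ together with $|b(\bv_n,\bpsi,\bv_n)-b(\bv,\bpsi,\bv)|\le\|\nabla\bpsi\|_{\bL^2}\,\|\bv_n-\bv\|_{\bL^4}\bigl(\|\bv_n\|_{\bL^4}+\|\bv\|_{\bL^4}\bigr)\to0$. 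Hence \eqref{variation:stat} holds for all $\bpsi\in\bigcup_m\bV_m$, and by density for all $\bpsi\in\bV$; weak lower semicontinuity of $\|\cdot\|_{\bV}$ transfers the uniform bound to $\bv$, which is exactly \eqref{energy:stat} (the pressure is afterwards recovered from \eqref{poreq:statNS} via de Rham's theorem, but does not enter the weak formulation).

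For uniqueness, let $\bv_1,\bv_2\in\bV$ solve \eqref{variation:stat}, put $\bw=\bv_1-\bv_2$, subtract the equations and test with $\bw$. Using $b(\bv_1,\bv_1,\bw)-b(\bv_2,\bv_2,\bw)=b(\bw,\bv_1,\bw)+b(\bv_2,\bw,\bw)=b(\bw,\bv_1,\bw)$ by antisymmetry, one gets $\mu\|\bw\|_{\bV}^2\le a_\varphi(\bw,\bw)=-b(\bw,\bv_1,\bw)$. Estimating $|b(\bw,\bv_1,\bw)|\le\|\bw\|_{\bL^4}^2\|\bv_1\|_{\bV}\le c_L c_P\|\bw\|_{\bV}^2\|\bv_1\|_{\bV}$ by Ladyzhenskaya and Poincar\'e, and inserting $\|\bv_1\|_{\bV}\le\frac{c_P}{\mu}\|\bf_s\|_{\bL^2}$ from \eqref{energy:stat}, gives $\mu\|\bw\|_{\bV}^2\le\frac{c_L c_P^2}{\mu}\|\bf_s\|_{\bL^2}\|\bw\|_{\bV}^2$, so condition \eqref{data:smallness} forces $\bw=0$.

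The only genuinely delicate point is the passage to the limit in the convective term, which is exactly where the two-dimensional compact embedding $\bH^1_0(\Omega)\hookrightarrow\bL^4(\Omega)$ is indispensable (in three dimensions one would only get $\bL^3$ and lose the quadratic control); everything else is routine bookkeeping with the constants $c_P$, $c_L$ and $\mu$, and the porous term $\alpha_\varepsilon(\varphi)$ enters only benignly through its nonnegativity and $L^\infty$-bound.
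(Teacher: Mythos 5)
Your proof is correct and is precisely the classical argument the paper points to (via Temam and Abergel--Temam) in place of giving its own proof: Galerkin approximation with the Brouwer/acute-angle lemma, the a priori bound from testing with $\bv_n$ and antisymmetry of $b$, passage to the limit in the convective term via compactness into $\bL^4(\Omega)$, and uniqueness from the energy estimate for the difference combined with the smallness condition \eqref{data:smallness}, with the porous term entering only through nonnegativity and the $L^\infty$ bound \eqref{linfalpha}. Only your closing aside is inaccurate: in three dimensions $\bH_0^1(\Omega)\hookrightarrow \bL^4(\Omega)$ is still compact (since $4<6=2^*$), so existence for the stationary problem is not where dimension three causes difficulties --- the paper's restriction to 2D is motivated by uniqueness/regularity issues for the time-dependent equations, not by this step.
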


	From here onwards, we work on the condition that Assumption~\ref{statdatassum} always holds. So that whenever we talk about a weak solution of the stationary equation \eqref{poreq:statNS}, we know that it is unique.
	Furthermore, we see from \eqref{energy:stat} and \eqref{data:smallness} that any weak solution $\bv\in \bV$ of \eqref{poreq:statNS} satisfies
	\begin{align}\label{energy:statsmall}
		\|\bv\|_{\bV} < \frac{\mu}{{2}c_Pc_L}.
	\end{align}
	It is thus natural to talk about elements of $\bV$ that satisfy \eqref{energy:statsmall}, as we shall see later.
	
	\subsubsection*{Linearized design-to-state operator}
	For $\bv_1,\bv_2 \in \bV$ and $\bF_s \in \bL^2(\Omega)$ we consider the linearized system
	\begin{align}
		\label{poreq:linstatNS}
		\left\{
		\begin{aligned}
			\alpha_\varepsilon(\varphi)\bvd - \mu\Delta \bvd + (\bvd\cdot\nabla)\bv_1 + (\bv_2\cdot\nabla)\bvd + \nabla \pid & = \bF_s &&\text{ in }\Omega,\\
			\dive\bvd & = 0 && \text{ in }\Omega,\\
			\bvd & = 0 &&\text{ on }\Sigma.
		\end{aligned}
		\right.
	\end{align}
	We define a weak solution $\bvd\in \bV$ to \eqref{poreq:linstatNS} if it solves the following variational problem
	\begin{align}\label{variation:linstat}
		a_\varphi(\bvd,\bpsi) + b(\bvd,\bv_1,\bpsi) + b(\bv_2,\bvd,\bpsi) = (\bF_s,\bpsi)_\Omega\qquad \forall\bpsi\in \bV.
	\end{align}
	The existence of solution to \eqref{variation:linstat} and its uniqueness is the subject of Theorem~\ref{theorem:linstatexist}.
	\begin{theorem}\label{theorem:linstatexist}
		Suppose that Assumption \ref{porousassum}\ref{en:assump-alpha-i} holds, $\bF_s\in L^2(\Omega)$ and that $\bv_1,\bv_2\in \bV$ with $\bv_1$ satisfying \eqref{energy:statsmall}. Then, given $\varphi\in \Phi_{ad}$, there exists a unique weak solution $\bvd\in\bV$ to \eqref{poreq:linstatNS} satisfying 
		\begin{align*}
			\|\bvd\|_{\bV} \le \frac{c_P}{\sqrt{\mu \sigma}}\|\bF_s\|_{\bL^2},
		\end{align*}
		where $\sigma = \mu - 2 c_Lc_P\|\bv_1\|_{\bV} > 0$.
	\end{theorem}
	As one would notice, the auxiliary variables $\bv_1,\bv_2\in \bV$ will be weak solutions of the stationary Navier--Stokes equations \eqref{poreq:statNS}, hence the imposition of the estimate \eqref{energy:statsmall}. Although the proof of Theorem~\ref{theorem:linstatexist} is straightforward, let us explain the imposition of \eqref{energy:statsmall} on $\bv_1\in\bV$. In fact, the reason for assuming such property is two-fold:
	\begin{itemize}
		\item coercivity of the left-hand side with respect to $\bV$;
		\item uniqueness of the weak solution.
	\end{itemize}
	For the coercivity, we additionally use the positivity of $\alpha_\varepsilon(\varphi)$, H{\"o}lder and Poincare inequalities and the anti-symmetry of $b(\bv_2,\cdot,\cdot)$
	\begin{align*}
		a_\varphi(\bvd,\bvd) + b(\bvd,\bv_1,\bvd) + b(\bv_2,\bvd,\bvd) & \ge \mu\|\nabla\bvd\|_{L^2}^2 - c_Lc_P\|\nabla\bv_1\|_{L^2}\|\nabla\bvd\|_{L^2}^2\\
		& = \left( \mu - c_Lc_P\|\nabla\bv_1\|_{L^2} \right)\|\bvd\|_{\bV}^2.
	\end{align*}
	From \eqref{energy:statsmall}, we know that $\mu > 2c_Lc_P\|\nabla\bv_1\|_{L^2} > c_Lc_P\|\nabla\bv_1\|_{L^2}$, and thus the term multiplied to $\|\nabla\bvd\|_{L^2}^2$ is positive. Similar line of reasoning can be used to show that indeed \eqref{energy:statsmall} gives us the uniqueness of the weak solution.
	
	\subsubsection*{Adjoint design-to-state operator}
	For $\bv_1,\bv_2 \in \bV$ and $\bG_s \in \bL^2(\Omega)$ we consider the  adjoint system
	\begin{align}
		\label{poreq:adjstatNS}
		\left\{
		\begin{aligned}
			\alpha_\varepsilon(\varphi)\bva - \mu\Delta \bva + (\nabla\bv_1)^\top \bva - (\bv_2\cdot\nabla)\bva + \nabla \pia & = \bG_s &&\text{ in }\Omega,\\
			\dive\bva & = 0 && \text{ in }\Omega,\\
			\bva & = 0 &&\text{ on }\Sigma.
		\end{aligned}
		\right.
	\end{align}
	A variable $\bva\in\bV$ is a weak solution to \eqref{poreq:adjstatNS} if it solves
	\begin{align}\label{variation:adjstat}
		a_\varphi(\bva,\bpsi) + b(\bpsi,\bv_1,\bva) + b(\bv_2,\bpsi,\bva) = (\bG_s,\bpsi)_\Omega\qquad \forall\bpsi\in \bV.
	\end{align}
	
	The existence of solutions for the adjoint equations \eqref{poreq:adjstatNS} is not as nuanced as the time-dependent case. It is in fact as straightforward as the existence of weak solutions for the linear equation \eqref{poreq:linstatNS}.
	
	\begin{theorem}\label{theorem:adjstatexist}
		Suppose that Assumption \ref{porousassum}\ref{en:assump-alpha-i} holds, $\bG_s\in \bL^2(\Omega)$ and that $\bv_1,\bv_2\in \bV$ with $\bv_1$ satisfying \eqref{energy:statsmall}. Then, given $\varphi\in \Phi_{ad}$, there exists a unique weak solution $\bva\in\bV$ to \eqref{poreq:linstatNS} satisfying 
		\begin{align*}
			\|\bva\|_{\bV} \le \frac{c_P}{\sqrt{\mu \sigma}}\|\bG_s\|_{\bL^2},
		\end{align*}
		where $\sigma = \mu - 2 c_Lc_P\|\bv_1\|_{\bV} > 0$.
	\end{theorem}
	
	\subsubsection*{Higher regularity of the solutions}
	Using classical elliptic regularity, improvement on the auxiliary variables $\bv_1$ and $\bv_2$ improves the regularity of solutions to \eqref{poreq:linstatNS} and \eqref{poreq:adjstatNS}, while a simple fixed point argument gives us the regularity of the weak solution to \eqref{poreq:statNS}. Again, we skip the proof but we refer the reader to \cite[Lemma~4.3]{GHHKL2016-ifb}.
	\begin{proposition}\label{prop:statstrong}
		Suppose that Assumption \ref{porousassum} hold, $\bf_s,\bF_s,\bG_s\in\bL^2(\Omega)$ and $\bu_1,\bu_2\in\bV\cap \bH^2(\Omega)$. Then the weak solutions $\bv$ of \eqref{poreq:statNS}, $\bvd$ of \eqref{poreq:linstatNS}, and $\bva$ of \eqref{poreq:adjstatNS} satisfy $\bv,\bvd,\bva\in\bV\cap \bH^2(\Omega)$.
	\end{proposition}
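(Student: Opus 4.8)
The plan is to treat each of the three stationary problems \eqref{poreq:statNS}, \eqref{poreq:linstatNS}, \eqref{poreq:adjstatNS} as a steady Stokes system, by moving the zeroth-order and convective terms to the right-hand side, and then to invoke the classical $\bL^q$-regularity theory for the Stokes operator on $\Omega$ (as in \cite[Lemma~4.3]{GHHKL2016-ifb}): if the data lie in $\bL^q(\Omega)$ with $1<q<\infty$, then the velocity gains two derivatives in $\bL^q$ (and the pressure one). The only work is to locate the right-hand side in such a space, and for this I would use repeatedly that we are in two space dimensions, so that $\bV\hookrightarrow\bL^r(\Omega)$ for every $r<\infty$, that the hypothesis $\bv_1,\bv_2\in\bH^2(\Omega)$ gives $\bv_1,\bv_2\in\bW^{1,r}(\Omega)\cap C(\overline{\Omega})$ for every $r<\infty$, and that $\alpha_\varepsilon(\varphi)\in L^\infty(\Omega)$ by \eqref{linfalpha}.

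For the linearized equation \eqref{poreq:linstatNS}, I would rewrite it as $-\mu\Delta\bvd+\nabla\pid=\bF_s-\alpha_\varepsilon(\varphi)\bvd-(\bvd\cdot\nabla)\bv_1-(\bv_2\cdot\nabla)\bvd$. Here $\bF_s\in\bL^2$, $\alpha_\varepsilon(\varphi)\bvd\in\bL^2$, and $(\bvd\cdot\nabla)\bv_1\in\bL^2$ (a product of the $\bL^4$-functions $\bvd$ and $\nabla\bv_1$), but the transport term $(\bv_2\cdot\nabla)\bvd$ is only in $\bL^{4/3}$ at this stage, since a priori $\nabla\bvd$ lies merely in $\bL^2$. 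So a first application of Stokes regularity gives $\bvd\in\bW^{2,4/3}(\Omega)$, and then the two-dimensional embedding $\bW^{2,4/3}(\Omega)\hookrightarrow\bW^{1,4}(\Omega)$ upgrades $\nabla\bvd$ to $\bL^4(\Omega)$, so that $(\bv_2\cdot\nabla)\bvd\in\bL^2$; a second application of Stokes regularity then yields $\bvd\in\bH^2(\Omega)\cap\bV$. The adjoint system \eqref{poreq:adjstatNS} is handled identically: $(\nabla\bv_1)^\top\bva$ takes the place of $(\bvd\cdot\nabla)\bv_1$ and is again a product of the $\bL^4$-functions $\nabla\bv_1$ and $\bva$, while $-(\bv_2\cdot\nabla)\bva$ requires the same $\bL^{4/3}\to\bW^{2,4/3}\to\bW^{1,4}\to\bL^2$ bootstrap.

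For the nonlinear equation \eqref{poreq:statNS} the same scheme works with $\bv$ playing the role of both auxiliary fields: from $\bv\in\bV$ one has $(\bv\cdot\nabla)\bv\in\bL^{4/3}$, so Stokes regularity gives $\bv\in\bW^{2,4/3}(\Omega)\hookrightarrow\bW^{1,4}(\Omega)$, whence $(\bv\cdot\nabla)\bv\in\bL^2$ and finally $\bv\in\bH^2(\Omega)\cap\bV$; alternatively one can realize $\bv$ as a fixed point of the map sending $\bw\in\bV\cap\bH^2(\Omega)$ to the solution of the linear Stokes system with right-hand side $\bf_s-\alpha_\varepsilon(\varphi)\bw-(\bw\cdot\nabla)\bw$, which under the smallness condition \eqref{data:smallness} is a contraction on a suitable ball and simultaneously re-proves uniqueness. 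The step I expect to be the only genuine obstacle — and the reason the intermediate bootstrap cannot be skipped — is precisely that in two dimensions the convective terms $(\bv_2\cdot\nabla)\bvd$, $(\bv_2\cdot\nabla)\bva$ and $(\bv\cdot\nabla)\bv$ land initially only in $\bL^{4/3}$ (because functions of $\bV$ need not be bounded in two dimensions), so one must first cash in the gain $\bW^{2,4/3}(\Omega)\hookrightarrow\bW^{1,4}(\Omega)$ before the right-hand side reaches $\bL^2(\Omega)$. A secondary point is that $\Omega$ must be regular enough — say with $C^{1,1}$ boundary, or convex — for the $\bL^q$-Stokes estimates to be available; I would record this as a standing hypothesis on the hold-all domain.
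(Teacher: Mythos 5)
Your proposal is correct and coincides in essence with the paper's (only sketched) argument, which likewise invokes classical Stokes/elliptic regularity for the linearized and adjoint systems and a fixed-point/bootstrap argument for the nonlinear equation, deferring the details to \cite[Lemma~4.3]{GHHKL2016-ifb}. One minor remark: since in two dimensions $\bH^2(\Omega)\hookrightarrow C(\overline{\Omega})$, the transport terms $(\bv_2\cdot\nabla)\bvd$ and $(\bv_2\cdot\nabla)\bva$ already lie in $\bL^2(\Omega)$, so your intermediate $\bL^{4/3}$--$\bW^{2,4/3}$ bootstrap is genuinely needed only for \eqref{poreq:statNS}, where $\bv\in\bV$ is all one initially knows (and your own observation that $\bv_1,\bv_2\in C(\overline{\Omega})$ already makes this step superfluous for the linear systems).
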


	\section{The optimization problems}
	\label{sec:optProblems}
	Before we recall the optimization problems, let us note that the existence and uniqueness results from Section~\ref{sec:solGovEq} allow us to define the following operators, which we respectively call the time-dependent and stationary design-to-state operators
	\begin{itemize}
		\item $\ST:\Phi_{ad}\to W_T^2(\bV)$ defined as the unique weak solution $\ST(\varphi) = \bu$  of \eqref{poreq:timeNS};
		\item $\Ss:\Phi_{ad}\to \bV$ defined as the unique weak solution $\Ss(\varphi) = \bv$ of \eqref{poreq:statNS}.
	\end{itemize}
	The well-definedness of both operators are attributed to Theorem~\ref{theorem:timeexist} and Theorem~\ref{theorem:statexist}, respectively. 
	
	From these, we consider the reduced forms of the functionals $J_T$ and $J_s$, and thus the  optimization problems under consideration
	\begin{align}
		&\min_{\varphi\in\Phi_{ad}} J_T(\varphi) = \frac{1}{T}\left[\int_{\Omega_T} \left( \frac{\varphi + 1}{2} \right) |\ST(\varphi)-\bu_d|^2 \one_\omega \du x\du t + \int_{\Omega_T} \alphat_\varepsilon(\varphi)|\ST(\varphi)|^2\du x\du t \right]  + \gamma \mathbb{E}_{\varepsilon}(\varphi)
		\tag{\ensuremath{P_T}}\label{reduced:objphasetime}\\
		&\min_{\varphi\in\Phi_{ad}} J_s(\varphi) = \int_\Omega \left( \frac{\varphi + 1}{2} \right) |\Ss(\varphi)-\bu_d|^2\one_\omega \du x  + \int_\Omega \alphat_\varepsilon(\varphi)|\Ss(\varphi)|^2\du x + \gamma \mathbb{E}_{\varepsilon}(\varphi)
		\tag{\ensuremath{P_s}}\label{reduced:objphasestat}
	\end{align}
	To talk about the existence of minimizers to \ref{reduced:objphasetime} and \ref{reduced:objphasestat}, let us discuss the topology we endow $\Phi_{ad}$ with. On the one hand, the definition of $\Phi_{ad}$ compels us to consider the weak$^*$ topology in $L^\infty(\Omega)$. Additionally it is necessary to also consider the weak topology in $H^1(\Omega)$. The following definition formalizes such a topology for elements in $\Phi_{ad}$.
	
	\begin{definition}
		Let $(\varphi_n)\subset \Phi_{ad}$ and $\varphi\in\Phi_{ad}$. We say that the sequence $(\varphi_n)$ converges to $\varphi$ in $\Phi_{ad}$, which we write symbolically as $\varphi_n\wPhi \varphi$, if and only if $\varphi_n \ws \varphi$ in $L^\infty(\Omega)$ and $\varphi_n \rightharpoonup \varphi$ in $H^1(\Omega)$.
	\end{definition}
	
	We note that a sequence $(\varphi_n)\subset\Phi_{ad}$ converging to $\varphi\in \Phi_{ad}$ in $\Phi_{ad}$ satisfies $\varphi_n\to\varphi$ in $L^p(\Omega)$, up to a subsequence, for $2\le p < \infty$ and a.e. in $\Omega$.
	
	The next question that we  address is the continuity of the design-to-state operators. Let us first deal with the time-dependent design-to-state operator $\ST$.
	\begin{theorem}[Continuity of $\ST$]
		\label{thm:ST-continuous}
		Let $(\varphi_n)\subset \Phi_{ad}$ and $\varphi^*\in\Phi_{ad}$ such that $\varphi_n \wPhi \varphi^*$. Suppose further that Assumptions \ref{datassum}, \ref{porousassum}\ref{en:assump-alpha-i} and \ref{objassump}\ref{en-assump-obj-i} hold.
		Then we obtain the following convergence results
		\begin{align}
			\bullet \ & \ST(\varphi_n)\to \ST(\varphi^*) \text{ in } L^2(I_T;\bV), \text{ i.e., continuity of } \ST,\label{eq:ST-continuous}\\
			\bullet \ &\int_{\Omega_T} \left( \frac{\varphi_n + 1}{2} \right) |\ST(\varphi_n)-\bu_d|^2 \one_\omega \du x\du t  \to \int_{\Omega_T} \left( \frac{\varphi^* + 1}{2} \right) |\ST(\varphi^*)-\bu_d|^2 \one_\omega \du x\du t, \label{conv:timetracking}\\
			\bullet \ & \int_{\Omega_T} \alphat_\varepsilon(\varphi_n)|\ST(\varphi_n)|^2\du x\du t \to \int_{\Omega_T} \alphat_\varepsilon(\varphi^*)|\ST(\varphi^*)|^2\du x\du t.\label{conv:timehecht}
		\end{align}
	\end{theorem}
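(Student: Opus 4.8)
The plan is to proceed by a standard compactness-and-limit-passage argument. First I would use the uniform energy estimates to extract weakly convergent subsequences. From the bound \eqref{energy:time} together with \eqref{connect:ffs}, the sequence $\bu_n := \ST(\varphi_n)$ is bounded in $L^\infty(I_T;\bH)\cap L^2(I_T;\bV)$, and from \eqref{energy:timepr} (using that $\|\alpha_\varepsilon(\varphi_n)\|_{L^\infty}\le 2c$ by \eqref{linfalpha}, uniformly in $n$) the time derivatives $\partial_t\bu_n$ are bounded in $L^2(I_T;\bV^*)$. Hence, along a subsequence, $\bu_n\rightharpoonup \bu^*$ in $L^2(I_T;\bV)$, $\partial_t\bu_n\rightharpoonup \partial_t\bu^*$ in $L^2(I_T;\bV^*)$, and by the Aubin--Lions--Simon embedding $W_T^2(\bV)\hookrightarrow C(\overline{I_T};\bH)$ being compact into $L^2(I_T;\bH)$, we get $\bu_n\to\bu^*$ strongly in $L^2(I_T;\bH)$, hence in $L^2(I_T;\bL^2(\Omega))$, and a.e. in $\Omega_T$ after a further subsequence. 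At the same time $\varphi_n\to\varphi^*$ strongly in $L^p(\Omega)$ for every $p<\infty$ and a.e. in $\Omega$, as noted after the definition of $\wPhi$-convergence.

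Next I would pass to the limit in the weak formulation \eqref{weak:timeNS}. Fix $\bpsi\in\bV$ and a smooth scalar test function in time; test \eqref{weak:timeNS} against it and integrate over $(0,T)$. The linear terms $\langle\partial_t\bu_n,\bpsi\rangle_{\bV}$ and $\mu\int_\Omega\nabla\bu_n:\nabla\bpsi$ pass by weak convergence. For the zeroth-order term $\int_{\Omega_T}\alpha_\varepsilon(\varphi_n)\bu_n\cdot\bpsi$, I would write $\alpha_\varepsilon(\varphi_n)\to\alpha_\varepsilon(\varphi^*)$ in every $L^q(\Omega)$, $q<\infty$ (by continuity of $\alpha_\varepsilon$ on $\Phi_{ad}$ in $L^2$, dominated convergence via \eqref{linfalpha}, and interpolation), combined with $\bu_n\to\bu^*$ strongly in $L^2(I_T;\bL^2)$, so the product converges. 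The only delicate term is the convection term $b(\bu_n,\bu_n,\bpsi)$: I would exploit antisymmetry, rewriting it as $-b(\bu_n,\bpsi,\bu_n)=-\int_{\Omega_T}[(\bu_n\cdot\nabla)\bpsi]\cdot\bu_n$, which is quadratic in $\bu_n$ with no derivative on $\bu_n$; strong $L^2(I_T;\bL^2)$ convergence of $\bu_n$ (and boundedness in $L^4(I_T;\bL^4)$ via the Ladyzhenskaya interpolation, to control integrability) then yields $b(\bu_n,\bu_n,\bpsi)\to b(\bu^*,\bu^*,\bpsi)$. The initial condition $\bu_n(0)=\bu_0$ passes to $\bu^*(0)=\bu_0$ in $\bH$ by the $C(\overline{I_T};\bH)$ convergence. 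Thus $\bu^*$ is a weak solution of \eqref{poreq:timeNS} with design $\varphi^*$, and by uniqueness (Theorem \ref{theorem:timeexist}) $\bu^*=\ST(\varphi^*)$; since the limit is independent of the subsequence, the whole sequence converges.

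To upgrade weak convergence in $L^2(I_T;\bV)$ to \emph{strong} convergence \eqref{eq:ST-continuous}, I would use an energy-equality argument: test the equation for $\bu_n-\bu^*$ (equivalently, subtract the two weak formulations and test with $\bu_n-\bu^*$, which is admissible by the strong regularity from Proposition \ref{prop:timestrong}) to obtain a Gr\"onwall-type estimate for $\|\bu_n-\bu^*\|_{L^2(I_T;\bV)}$ in terms of $\|\alpha_\varepsilon(\varphi_n)-\alpha_\varepsilon(\varphi^*)\|$ and $\|\bu_n-\bu^*\|_{L^2(I_T;\bH)}$, both of which tend to zero; the coercivity constant of $a_{\varphi_n}$ supplies the $\mu\|\cdot\|_{\bV}^2$ on the left. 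Finally, the two functional convergences \eqref{conv:timetracking}--\eqref{conv:timehecht} follow from \eqref{eq:ST-continuous} plus $\varphi_n\to\varphi^*$ a.e.\ with the uniform $L^\infty$ bound \eqref{bound:Phi}: write the differences of the integrands, split into a factor converging strongly and a factor bounded, and invoke dominated convergence together with the continuity of $\alphat_\varepsilon$ on $\Phi_{ad}$. The main obstacle is the convection term and keeping the integrability bookkeeping (via $L^4(I_T;\bL^4)$ bounds) clean enough that every product of weakly/strongly convergent factors actually converges; everything else is routine.
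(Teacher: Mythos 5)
Your proposal is correct and follows essentially the same route as the paper: uniform energy bounds plus Aubin--Lions to get weak limits and strong $L^2(I_T;\bH)$ convergence, identification of the limit as $\ST(\varphi^*)$ via the weak formulation and uniqueness, an energy estimate for the difference $\bu_n-\ST(\varphi^*)$ (driven by $\|\alpha_\varepsilon(\varphi_n)-\alpha_\varepsilon(\varphi^*)\|_{L^2}$ and $\|\bu_n-\bu^*\|_{L^2(\bH)}$) to upgrade to strong $L^2(I_T;\bV)$ convergence, and then splitting the functional differences using the $L^\infty$ bound on $\varphi_n$ and the convergence of $\varphi_n$. The only cosmetic difference is that your appeal to Proposition~\ref{prop:timestrong} for the admissibility of testing with $\bu_n-\bu^*$ is unnecessary (this is standard for $W_T^2(\bV)$ solutions), and the paper handles the $\varphi$-dependent factors via weak-$*$ $L^\infty$ convergence against $L^1$ integrands rather than dominated convergence, which is an equivalent bookkeeping choice.
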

	\begin{proof} 
		To achieve \eqref{eq:ST-continuous} we start with diagonal testing to obtain weakly convergent subsequences.
		Since $\bu_n:= \ST(\varphi_n)$ solves \eqref{weak:timeNS} with $\varphi = \varphi_n$, a diagonal testing gives us  uniform boundedness of the sequence $(\bu_n)$ in $L^\infty(I_T;\bH)\cap L^2(I_T;\bV)$ and in $H^1(I_T;\bV^*)$. From this, we infer the existence of $\bu^*\in W^2_T(\bV)$ such that
		\begin{align*}
			\bu_n \rightharpoonup \bu^* &\text{ in }L^2(I_T;\bV),\\
			\bu_n \ws \bu^* &\text{ in }L^\infty(I_T;\bH),\\
			\partial_t \bu_n \rightharpoonup \partial_t \bu^* &\text{ in }L^2(I_T;\bV^*).
		\end{align*}
		Using the convergences we have at hand, a passage to the limit gives us $\bu^* = \ST(\varphi^*)$. Furthermore, we get, from Aubin--Lions lemma, that 
		\begin{align}\label{est1}
			\bu_n\to\bu^* \text{ in }L^2(I_T;\bH).
		\end{align}
		Now, the variable $\bw := \bu_n-\bu^* \in W_T^2(\bV)$ solves the equation
		\begin{align*}
			\langle \partial_t\bw, \bpsi \rangle_{\bV} + a_{\varphi_n}(\bw,\bpsi) + b(\bu_n,\bw,\bpsi) = b(\bw,\bpsi,\bu^*) + ( (\alpha_\varepsilon(\varphi^*) - \alpha_\varepsilon(\varphi_n) )\bu^*,\bpsi )_\Omega\quad \forall\bpsi\in \bV.
		\end{align*}
		Taking $\bpsi = \bw$ --- we get from the nonnegativity of $\alpha_\varepsilon(\varphi_n)$ --- that 
		\begin{align*}
			\frac{1}{2}\frac{d}{dt}\|\bw\|_{\bL^2}^2 + \mu\|\nabla \bw\|_{\bL^2}^2 \le c_L\|\nabla\bu^*\|_{\bL^2}\|\bw\|_{\bL^2}\|\nabla\bw\|_{\bL^2} + c_Lc_P\|\alpha_\varepsilon(\varphi^*) - \alpha_\varepsilon(\varphi_n)  \|_{L^2}\|\nabla\bu^*\|_{\bL^2}\|\nabla\bw\|_{\bL^2} \\
			\le \frac{c_L^2}{\mu}\|\nabla\bu^*\|_{\bL^2}^2\|\bw\|_{\bL^2}^2 + \frac{c_L^2c_P^2}{\mu}\|\alpha_\varepsilon(\varphi^*) - \alpha_\varepsilon(\varphi_n)  \|_{L^2}^2\|\nabla\bu^*\|_{\bL^2}^2 + \frac{\mu}{2}\|\nabla\bw\|_{\bL^2}^2. 
		\end{align*}
		\jshsb{We recall that by definition, $\bu^* = \ST(\varphi^*)$ implies $\bu^*(0) = \bu_0$.} Therefore, after rearrangement and taking integral over the interval $(0,T)$ we get
		\begin{align*}
			\frac{\mu}{2}\int_0^T \|\nabla\bw\|_{\bV}^2 \du t \le  \frac{c_L^2}{\mu}\|\nabla\bu^*\|_{L^\infty(\bL^2)}^2\int_0^T\|\bw\|_{\bL^2}^2\du t + \frac{c_L^2c_P^2}{\mu}\|\alpha_\varepsilon(\varphi^*) - \alpha_\varepsilon(\varphi_n)  \|_{L^2}^2\int_0^T\|\nabla\bu^*\|_{\bL^2}^2\du t.
		\end{align*}
		\jshsb{We also underline the fact that, from Proposition~\ref{prop:timestrong}, $\bu^* = \ST(\varphi^*)$ implies $\bu^* \in W^{2,1}_{2,T} \hookrightarrow C(\overline{I_T};\bV)$. This allowed us to get the estimate for the first term in the left-hand side of the inequality above.}  From \eqref{est1} we see that $\bw\to0$ in $L^2(I_T;\bH)$. Adding the fact that $\alpha_\varepsilon(\varphi_n)\to \alpha_\varepsilon(\varphi)$ in $L^2(\Omega)$, we can conclude that the right-hand side of the inequality goes to zero as $n\to\infty$; i.e. the stated continuity.

		To establish \eqref{conv:timetracking}, we note that
		\begin{align*}
			&\left|\int_{\Omega_T} \left( \frac{\varphi_n + 1}{2} \right) |\bu_n-\bu_d|^2 \one_\omega \du x\du t - \int_{\Omega_T} \left( \frac{\varphi^* + 1}{2} \right) |\bu^*-\bu_d|^2 \one_\omega \du x\du t  \right|\\
			&\le \left|\int_{\Omega_T} \left( \frac{\varphi_n + 1}{2} \right) (\bu_n - \bu^*)\cdot(\bu_n + \bu^* - 2\bu_d) \one_\omega \du x\du t  \right|\\
			&\quad + \frac{1}{2}\left|\int_{\Omega} (\varphi_n-\varphi^*) \int_0^T |\bu^*-\bu_d|^2\one_\omega  \du t\du x \right|\\
			&\le (\|\varphi_n\|_{L^\infty} + 1)\|\bu_n - \bu^*\|_{L^2(\bH)}
			( c_P(\|\bu_n\|_{L^2(\bV)}+ \| \bu^*\|_{L^2(\bV)}) + \|\bu_d\|_{\bL^2(\omega)} )\\
			&\quad + \frac{1}{2}\left|\int_{\Omega} (\varphi_n-\varphi^*) \int_0^T |\bu^*-\bu_d|^2 \one_\omega \du t\du x \right| = : I_1 + I_2.
		\end{align*}
		Since $\|\varphi_n\|_{L^\infty} \le 1$, $\bu_n$ and $\bu^*$ satisfy the estimate \eqref{energy:time}, and $\bu_n\to \bu^*$ in $L^2(I_T;\bH)$, we see that $I_1$ tends to zero. Finally, from the fact that $\int_0^T |\bu^*-\bu_d|^2  \du t \in L^1(\Omega)$, we infer that $I_2\to 0$ as $n\to\infty$.
		
		Similar arguments can be used to establish \eqref{conv:timehecht}.        
	\end{proof}
	
	\begin{remark}
		We note that the continuity highlighted by Theorem~\ref{thm:ST-continuous} is the strong convergence in $L^2(I_T;\bV)$, which is actually quite generous for our purposes. 
		As one would have noticed in the proof of Theorem~\ref{thm:ST-continuous}, we also get the strong convergence in $L^2(I_T;\bH)$ by invoking Aubin--Lions lemma, which is enough for our upcoming results.
	\end{remark}

	The case of the stationary design-to-state operator $\Ss$ is, as expected, less involved than the previous case. Such continuity property is stated in Theorem~\ref{theorem:statcontinuity} but we skip the proof.
	\begin{theorem}[Continuity of $\mathcal S_s$]
		\label{theorem:statcontinuity}
		Let $(\varphi_n)\subset \Phi_{ad}$ and $\varphi^*\in\Phi_{ad}$ such that $\varphi_n \wPhi \varphi^*$. Furthermore, suppose that Assumptions  \ref{datassum}, \ref{porousassum}\ref{en:assump-alpha-i} and \ref{objassump}\ref{en-assump-obj-i} hold.
		Then, similar as the time-dependent case, we obtain the following convergence
		results
		\begin{align*}
			\bullet \ & \Ss(\varphi_n)\to \Ss(\varphi^*) \text{ in } \bV.\\
			\bullet \ &\int_{\Omega} \left( \frac{\varphi_n + 1}{2} \right) |\Ss(\varphi_n)-\bu_d|^2 \one_\omega \du x  \to \int_{\Omega} \left( \frac{\varphi^* + 1}{2} \right) |\Ss(\varphi^*)-\bu_d|^2 \one_\omega \du x, \\
			\bullet \ & \int_{\Omega} \alphat_\varepsilon(\varphi_n)|\Ss(\varphi_n)|^2\du x \to \int_{\Omega} \alphat_\varepsilon(\varphi^*)|\Ss(\varphi^*)|^2\du x. 
		\end{align*}
	\end{theorem}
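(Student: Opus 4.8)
The plan is to follow the blueprint of the proof of Theorem~\ref{thm:ST-continuous}, which is considerably lighter here because the stationary setting replaces the Aubin–Lions-in-time machinery by the single compact embedding $\bV\hookrightarrow\bL^4(\Omega)$ valid in two dimensions. First I would extract a limit point: writing $\bv_n:=\Ss(\varphi_n)$, the a~priori bound \eqref{energy:stat} shows that $(\bv_n)$ is bounded in $\bV$ uniformly in $n$, so along a subsequence $\bv_n\rightharpoonup\bv^*$ in $\bV$ and, by Rellich--Kondrachov, $\bv_n\to\bv^*$ strongly in $\bL^4(\Omega)$ (hence in $\bL^2(\Omega)$). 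Simultaneously, since $\varphi_n\to\varphi^*$ in $L^2(\Omega)$ and $\alpha_\varepsilon,\alphat_\varepsilon$ are continuous on $\Phi_{ad}$ with respect to the $L^2$-norm (Assumption~\ref{porousassum}\ref{en:assump-alpha-i} and the remark following it), one has $\alpha_\varepsilon(\varphi_n)\to\alpha_\varepsilon(\varphi^*)$ and $\alphat_\varepsilon(\varphi_n)\to\alphat_\varepsilon(\varphi^*)$ in $L^2(\Omega)$, while all of these stay bounded in $L^\infty(\Omega)$ by \eqref{linfalpha}.

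Next I would pass to the limit in the weak formulation \eqref{variation:stat}. For fixed $\bpsi\in\bV$: the viscous term converges by weak $\bV$-convergence; for the Darcy term I split $\alpha_\varepsilon(\varphi_n)\bv_n=(\alpha_\varepsilon(\varphi_n)-\alpha_\varepsilon(\varphi^*))\bv_n+\alpha_\varepsilon(\varphi^*)\bv_n$, where the first summand tends to $0$ in $\bL^{4/3}(\Omega)$ (H\"older, using the $L^2$-convergence of $\alpha_\varepsilon(\varphi_n)$ and $\bL^4$-boundedness of $\bv_n$) and the second converges since $\alpha_\varepsilon(\varphi^*)\bpsi\in\bL^2(\Omega)$ and $\bv_n\rightharpoonup\bv^*$ in $\bL^2(\Omega)$; for the trilinear term I use antisymmetry to write $b(\bv_n,\bv_n,\bpsi)=-b(\bv_n,\bpsi,\bv_n)$ and pass to the limit via the strong $\bL^4$-convergence. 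Thus $\bv^*$ solves \eqref{variation:stat} with $\varphi=\varphi^*$, so $\bv^*=\Ss(\varphi^*)$ by the uniqueness granted by Assumption~\ref{statdatassum}; the usual subsequence argument then upgrades this to convergence of the whole sequence.

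The crux is upgrading weak to strong convergence in $\bV$, which is the analog of the delicate energy estimate in the time-dependent proof. Setting $\bw:=\bv_n-\bv^*$ and subtracting the weak formulations for $\bv_n$ and $\bv^*$, after rearranging the Darcy and convective terms one gets $a_{\varphi_n}(\bw,\bpsi)+b(\bw,\bv_n,\bpsi)+b(\bv^*,\bw,\bpsi)=-\big((\alpha_\varepsilon(\varphi_n)-\alpha_\varepsilon(\varphi^*))\bv^*,\bpsi\big)_\Omega$ for all $\bpsi\in\bV$. Testing with $\bpsi=\bw$, the nonnegativity of $\alpha_\varepsilon(\varphi_n)$ gives $a_{\varphi_n}(\bw,\bw)\ge\mu\|\bw\|_{\bV}^2$, antisymmetry kills $b(\bv^*,\bw,\bw)$, and one estimates $|b(\bw,\bv_n,\bw)|\le c_L\|\bw\|_{\bL^2}\|\bw\|_{\bV}\|\bv_n\|_{\bV}$ together with the smallness bound \eqref{energy:statsmall}, namely $\|\bv_n\|_{\bV}<\mu/(2c_Pc_L)$, as well as $|((\alpha_\varepsilon(\varphi_n)-\alpha_\varepsilon(\varphi^*))\bv^*,\bw)_\Omega|\le C\|\alpha_\varepsilon(\varphi_n)-\alpha_\varepsilon(\varphi^*)\|_{L^2}\|\bw\|_{\bV}$. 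Young's inequality absorbs the $\|\bw\|_{\bV}$-terms into the coercive part, leaving $\tfrac{\mu}{2}\|\bw\|_{\bV}^2\le C\big(\|\bw\|_{\bL^2}^2+\|\alpha_\varepsilon(\varphi_n)-\alpha_\varepsilon(\varphi^*)\|_{L^2}^2\big)\to 0$, which is exactly where Assumption~\ref{statdatassum} is indispensable. Finally, the two functional convergences follow as in Theorem~\ref{thm:ST-continuous}: for the tracking term, add and subtract $\big(\tfrac{\varphi_n+1}{2}\big)|\bv^*-\bu_d|^2\one_\omega$, bound the first remainder by $(\|\varphi_n\|_{L^\infty}+1)\,\|\bv_n-\bv^*\|_{\bL^2(\omega)}\big(c_P(\|\bv_n\|_{\bV}+\|\bv^*\|_{\bV})+\|\bu_d\|_{\bL^2(\omega)}\big)$ which vanishes by \eqref{bound:Phi}, \eqref{energy:stat} and the strong $\bV$-convergence, while the second is $\int_\Omega(\varphi_n-\varphi^*)\,|\bv^*-\bu_d|^2\one_\omega\du x\to 0$ since $|\bv^*-\bu_d|^2\one_\omega\in L^1(\Omega)$ and $\varphi_n\ws\varphi^*$ in $L^\infty(\Omega)$; the penalty term is handled identically by splitting $\alphat_\varepsilon(\varphi_n)|\bv_n|^2-\alphat_\varepsilon(\varphi^*)|\bv^*|^2=\alphat_\varepsilon(\varphi_n)(|\bv_n|^2-|\bv^*|^2)+(\alphat_\varepsilon(\varphi_n)-\alphat_\varepsilon(\varphi^*))|\bv^*|^2$ and using $\|\alphat_\varepsilon(\varphi_n)\|_{L^\infty}\le C$ together with $\bv^*\in\bL^4(\Omega)$.
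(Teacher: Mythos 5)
Your proof is correct and follows exactly the route the paper intends: the paper omits the proof of Theorem~\ref{theorem:statcontinuity} as a less involved analogue of Theorem~\ref{thm:ST-continuous}, and your argument (uniform $\bV$-bound via \eqref{energy:stat}, compactness in $\bL^4$, limit identification with uniqueness from Assumption~\ref{statdatassum}, then testing the difference equation with $\bw$ to upgrade to strong $\bV$-convergence, and the two functional limits as in the time-dependent proof) is precisely that adaptation. One small remark: the smallness bound \eqref{energy:statsmall} is not actually indispensable for the strong-convergence estimate once you keep the $\|\bw\|_{\bL^2}^2$ term via Young's inequality (boundedness of $\|\bv_n\|_{\bV}$ suffices there); Assumption~\ref{statdatassum} is really needed for uniqueness of $\Ss(\varphi^*)$ and hence for identifying the limit and getting whole-sequence convergence.
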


	\subsection{Existence of optimal design functions}
	
	After defining  the appropriate topology for the admissible designs and providing continuity for the design-to-state operators, we are now in position to show that minimizers for both $J_T$ and $J_s$ exist.
	
	\begin{theorem}\label{thm:minexist}
		Suppose that Assumptions \ref{datassum}, \ref{porousassum}\ref{en:assump-alpha-i}, \ref{objassump}\ref{en-assump-obj-i}, and \ref{objassump}\ref{en-assump-obj-ii} hold.
		Then problems \eqref{reduced:objphasetime} and \eqref{reduced:objphasestat} both admit at least one global solution, i.e., there exist $\varphi^T,\varphi^s\in\Phi_{ad}$ such that $J_T(\varphi^T)\le J_T(\varphi)$ and $J_s(\varphi^s)\le J_s(\varphi)$ for all $\varphi \in\Phi_{ad}$.
	\end{theorem}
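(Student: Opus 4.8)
The plan is to apply the direct method of the calculus of variations to each of the reduced problems \eqref{reduced:objphasetime} and \eqref{reduced:objphasestat}; the two arguments are structurally identical, so I would carry out the time-dependent case in detail and then indicate the obvious substitutions for the stationary one.

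First I would check that $J_T$ is bounded below and that the infimum is finite. Since $\alphat_\varepsilon\ge 0$ by Assumption~\ref{porousassum}\ref{en:assump-alpha-i}, since $\tfrac{\varphi+1}{2}\in[0,1]$ for $\varphi\in\Phi_{ad}$, and since $\mathbb{E}_\varepsilon\ge 0$, every term of $J_T$ is nonnegative, so $m:=\inf_{\varphi\in\Phi_{ad}}J_T(\varphi)\in[0,+\infty)$ is well defined; finiteness follows by evaluating $J_T$ at the admissible design $\varphi\equiv 1$, for which the porous term and $\mathbb{E}_\varepsilon$ vanish and the tracking term is finite because $\bu_d\in\bL^2(\omega)$ and $\ST(1)\in W_T^2(\bV)\hookrightarrow L^2(\Omega_T)$. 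Next I would pick a minimizing sequence $(\varphi_n)\subset\Phi_{ad}$ with $J_T(\varphi_n)\to m$. By definition of $\Phi_{ad}$ and \eqref{bound:Phi} we have $\|\varphi_n\|_{L^\infty}\le 1$, hence $(\varphi_n)$ is bounded in $L^2(\Omega)$; and since $J_T(\varphi_n)$ is bounded while the state-dependent terms are nonnegative, the Ginzburg--Landau term satisfies $\gamma\,\mathbb{E}_\varepsilon(\varphi_n)\le C$, which together with $\gamma>0$ and the definition of $\mathbb{E}_\varepsilon$ yields a uniform bound on $\|\nabla\varphi_n\|_{L^2}$. Thus $(\varphi_n)$ is bounded in $H^1(\Omega)$, and after passing to a subsequence (not relabeled) there is $\varphi^*\in H^1(\Omega)$ with $\varphi_n\rightharpoonup\varphi^*$ in $H^1(\Omega)$ and $\varphi_n\ws\varphi^*$ in $L^\infty(\Omega)$; by Rellich--Kondrachov we may also assume $\varphi_n\to\varphi^*$ strongly in $L^p(\Omega)$ for every $p<\infty$ and a.e.\ in $\Omega$. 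The a.e.\ convergence with $|\varphi_n|\le 1$ gives $|\varphi^*|\le 1$ a.e., so $\varphi^*\in\Phi_{ad}$ and $\varphi_n\wPhi\varphi^*$.

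I would then pass to the limit. The first two terms of $J_T$ converge to the corresponding quantities at $\varphi^*$ directly by \eqref{conv:timetracking} and \eqref{conv:timehecht} of Theorem~\ref{thm:ST-continuous}. For the Ginzburg--Landau term I would argue weak lower semicontinuity: $\varphi\mapsto\int_\Omega\frac{\varepsilon}{2}|\nabla\varphi|^2\,\mathrm{d}x$ is convex and continuous on $H^1(\Omega)$, hence weakly lower semicontinuous; and writing $\Psi=\Psi_0+I_{[-1,1]}$ with $\Psi_0(s)=(1-s^2)/2$, the indicator part vanishes identically along the sequence and in the limit because $\varphi_n,\varphi^*\in\Phi_{ad}$, while $\int_\Omega\Psi_0(\varphi_n)\,\mathrm{d}x\to\int_\Omega\Psi_0(\varphi^*)\,\mathrm{d}x$ by the a.e.\ convergence, the bound $|\varphi_n|\le 1$, and dominated convergence. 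Hence $\mathbb{E}_\varepsilon(\varphi^*)\le\liminf_n\mathbb{E}_\varepsilon(\varphi_n)$, and altogether
\[
J_T(\varphi^*)\ \le\ \liminf_{n\to\infty}J_T(\varphi_n)\ =\ m,
\]
which forces $J_T(\varphi^*)=m$; setting $\varphi^T:=\varphi^*$ gives the claimed global minimizer. The stationary case is proved verbatim, replacing $\ST$ by $\Ss$, Theorem~\ref{thm:ST-continuous} by Theorem~\ref{theorem:statcontinuity}, and \eqref{energy:time} by \eqref{energy:stat}.

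As for the main obstacle: once the continuity results of Section~\ref{sec:optProblems} are available, this is a routine direct-method argument, and the two points needing genuine care are (i) extracting an $H^1$-bound on the minimizing sequence, which relies on the coercivity furnished by the Ginzburg--Landau term together with $\gamma>0$, and (ii) the weak lower semicontinuity of $\mathbb{E}_\varepsilon$, where the concavity of $\Psi_0$ precludes a convexity argument for the potential part and one must instead use the compact embedding $H^1(\Omega)\hookrightarrow L^2(\Omega)$ to upgrade to strong/a.e.\ convergence. Importantly, the state-dependent terms are \emph{not} lower-semicontinuity issues — they are genuinely continuous along $\wPhi$-convergent sequences by Theorems~\ref{thm:ST-continuous} and \ref{theorem:statcontinuity} — so the argument does not actually use their sign, only that of $\mathbb{E}_\varepsilon$ (for the coercivity step) and the boundedness of the minimizing values.
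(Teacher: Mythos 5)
Your proposal is correct and follows essentially the same route as the paper: a direct-method argument in which the Ginzburg--Landau term supplies the $H^1$-bound on an infimizing sequence, a $\wPhi$-convergent subsequence is extracted, the state-dependent terms pass to the limit via Theorems~\ref{thm:ST-continuous} and \ref{theorem:statcontinuity}, and $\mathbb{E}_\varepsilon$ is handled by weak lower semicontinuity of the gradient part plus strong $L^p$/a.e.\ convergence for $\Psi_0$. Your additional checks (closedness of $\Phi_{ad}$ under the limit and the remark that $\Psi_0$'s concavity forces the strong-convergence treatment) are correct refinements of what the paper states more briefly.
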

	\begin{proof}
		We start with the case of the time-dependent state. We begin with the fact that $J_T$ is nonnegative, from which we infer the existence of an infimizing sequence $(\varphi_n)\subset \Phi_{ad}$, i.e., 
		\begin{align*}
			J^*:=\liminf_{n\to \infty} J_T(\varphi_n) = \inf_{\varphi\in\Phi_{ad}} J_T(\varphi).
		\end{align*}
		The non-negativity of $\Psi$ in $\Phi_{ad}$ and the other integrands of the objective functional allows us to get the following estimate for the gradient for any $\varphi \in \Phi_{ad}$
		\begin{align*}
			\frac{\gamma}{2c_0} \int_\Omega \frac{\varepsilon}{2}|\nabla\varphi|^2\du x \le J_T(\varphi).
		\end{align*}
		Given $\eta > 0$, one finds $N\in\mathbb{N}$ such that
		\begin{align*}
			\frac{\gamma}{2c_0} \int_\Omega \frac{\varepsilon}{2}|\nabla\varphi_n|^2\du x \le J_T(\varphi_n) \le \inf_{\varphi\in \Phi_{ad}} J_T(\varphi) + \eta
		\end{align*}
		whenever $n\ge N$. We thus infer the uniform boundedness of $(\nabla\varphi_n)$ in $\bL^2(\Omega)$. Meanwhile, we see that, by definition, both the $L^\infty$ norm and the average $\frac{1}{|\Omega|}\int \varphi \du x$ of any $\varphi\in\Phi_{ad}$ is bounded by 1. Hence, we get the uniform boundedness of the sequence $(\varphi_n)$ both in $H^1(\Omega)$ and $L^\infty(\Omega)$. We can thus find an element $\varphi^T\in \Phi_{ad}$ such that $\varphi_n\wPhi \varphi^T$ in $\Phi_{ad}$, up to a subsequence. From \eqref{conv:timetracking} and \eqref{conv:timehecht} we see that
		\begin{align*}
			\int_{\Omega_T} \left( \frac{\varphi_n + 1}{2} \right)& |\ST(\varphi_n)-\bu_d|^2 \one_\omega \du x\du t + \int_{\Omega_T} \alphat_\varepsilon(\varphi_n)|\ST(\varphi_n)|^2\du x\du t\\
			& \longrightarrow \int_{\Omega_T} \left( \frac{\varphi^T + 1}{2} \right) |\ST(\varphi^T)-\bu_d|^2 \one_\omega \du x\du t + \int_{\Omega_T} \alphat_\varepsilon(\varphi^T)|\ST(\varphi^T)|^2\du x\du t.
		\end{align*}
		While the weak lower-semicontinuity of the $L^2$-norm, the weak convergence $\nabla\varphi_n\rightharpoonup\nabla\varphi^T$ in $\bL^2(\Omega)$, and $\varphi_n\to\varphi^T$ in $L^p(\Omega)$ gives us
		\begin{align*}
			\mathbb{E}_\varepsilon(\varphi^T)\le \liminf_{n\to\infty} \mathbb{E}_\varepsilon(\varphi_n).
		\end{align*}
		Combining these, we get
		\begin{align*}
			J_T(\varphi^T)\le \liminf_{n\to\infty} J_T(\varphi_n) = J^*.
		\end{align*}
		These imply that $\varphi^T$ is a minimizer for $J_T$.
		
		Similar arguments can be used to establish the existence of a minimizer for $J_s$.
	\end{proof}

	\subsection{First-order optimality conditions}
	
	Let us consider the operator $\mathcal G: W_T^2(\bV)\times \Phi_{ad} \to L^2(I_T;\bV^*)\times \bH$ defined as $\mathcal{G}(\bu,\varphi) = (\mathcal{G}_1(\bu,\varphi),\mathcal{G}_2(\bu,\varphi))$ with 
	\begin{align*}
		\int_0^T \langle \mathcal{G}_1(\bu,\varphi),\bpsi \rangle_{\bV} \du t
		= \int_0^T \left[ \langle \partial_t\bu,\bpsi \rangle_{\bV} + a_\varphi(\bu,\bpsi) + b(\bu,\bu,\bpsi)  - (\bf,\bpsi)_\Omega \right] \du t.
	\end{align*}
	and $\mathcal{G}_2(\bu,\varphi) = \bu(0)-\bu_0$.

	From Theorem~\ref{theorem:timeexist}, we see that given $\varphi \in \Phi_{ad}$ the equation $\mathcal{G}(\bu,\varphi) = 0$ admits a unique weak solution $\bu = \ST(\varphi) \in W_T^2(\bV)$. We employ such operators to establish the differentiability of the time-dependent design-to-state operator via the implicit function theorem.
	
	\begin{proposition}\label{prop:frech-design}
		Suppose that Assumption~\ref{objassump}\ref{en-assump-obj-ii} and the assumptions in Proposition~\ref{prop:timestrong} hold. Then the time-dependent design-to-state operator $\ST:\Phi_{ad}\to W_T^2(\bV)$ is, at least first-order, Fr{\'e}chet differentiable. Its derivative at $\varphi\in\Phi_{ad}$ in the direction $\delta\varphi\in\Phi_{ad}$ is the weak solution $\bud = \ST'(\varphi)\delta\varphi \in W_T^2(\bV)$ of \eqref{poreq:lintimeNS}
		with $\bu_1 = \bu_2 = \ST(\varphi)$, $\bF = -\alpha_\varepsilon'(\varphi)\delta\varphi\, \ST(\varphi) $, and $\bud_0 = 0$.
	\end{proposition}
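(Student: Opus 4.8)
The plan is to derive the Fr{\'e}chet differentiability of $\ST$ from the implicit function theorem applied to the operator $\mathcal{G}$ introduced above, whose zero set is precisely the graph of $\ST$, at the point $(\ST(\varphi),\varphi)$. A first point of care is that $\Phi_{ad}$ is a closed convex, rather than open, subset of $H^1(\Omega)$; I would handle this by extending $\alpha_\varepsilon(\cdot)$ to an open $H^1\cap L^\infty$-ball $U$ around the fixed $\varphi$ --- since $\alpha_\varepsilon\in C^{0,1}(\mathbb{R})$ the superposition operator is globally Lipschitz on $L^\infty(\Omega)$, and by the differentiability assumed in Assumption~\ref{porousassum}\ref{en:assump-alpha-ii} it is Fr{\'e}chet differentiable in the $L^2$-norm with derivative $\mathcal{A}_\varepsilon$, so these properties persist on $U$ --- or, equivalently, by working directly with difference quotients along admissible directions and the characterisation of Fr{\'e}chet differentiability through uniform convergence. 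Using the two-dimensional embeddings $W_T^2(\bV)\hookrightarrow C(\overline{I_T};\bH)\cap L^4(I_T;\bL^4(\Omega))$ and $\bV\hookrightarrow\bL^q(\Omega)$ for every $q<\infty$, the Ladyzhenskaya inequality, and the bound $\|\alpha_\varepsilon(\varphi)\|_{L^\infty}\le 2c$ from \eqref{linfalpha}, one first checks that $\mathcal{G}:W_T^2(\bV)\times U\to L^2(I_T;\bV^*)\times\bH$ is well defined and continuous, the second component $\mathcal{G}_2(\bu,\varphi)=\bu(0)-\bu_0$ making sense through the continuous trace $W_T^2(\bV)\hookrightarrow C(\overline{I_T};\bH)$.

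Next I would show that $\mathcal{G}$ is continuously Fr{\'e}chet differentiable on $W_T^2(\bV)\times U$. In $\bu$ it is bounded and linear apart from the quadratic term $b(\bu,\bu,\cdot)$, which is a bounded symmetric bilinear map into $L^2(I_T;\bV^*)$ --- here one uses the planar estimate $|b(\bu,\bv,\bpsi)|\le c_L\|\bu\|_{\bL^2}^{1/2}\|\nabla\bu\|_{\bL^2}^{1/2}\|\bv\|_{\bL^2}^{1/2}\|\nabla\bv\|_{\bL^2}^{1/2}\|\nabla\bpsi\|_{\bL^2}$ together with $W_T^2(\bV)\hookrightarrow C(\overline{I_T};\bH)\cap L^2(I_T;\bV)$ --- so its derivative is $\bud\mapsto b(\bud,\bu,\cdot)+b(\bu,\bud,\cdot)$ with quadratic remainder $b(\bud,\bud,\cdot)=O(\|\bud\|_{W_T^2(\bV)}^2)$, and consequently
\begin{align*}
\partial_{\bu}\mathcal{G}(\bu,\varphi)\bud=\Bigl(\ \bpsi\mapsto\langle\partial_t\bud,\bpsi\rangle_{\bV}+a_\varphi(\bud,\bpsi)+b(\bud,\bu,\bpsi)+b(\bu,\bud,\bpsi)\ ,\ \ \bud(0)\ \Bigr).
\end{align*}
In the variable $\varphi$ only the zeroth-order part of $a_\varphi$ contributes, so that $\partial_{\varphi}\mathcal{G}(\bu,\varphi)\delta\varphi=\bigl(\,\bpsi\mapsto\int_\Omega\mathcal{A}_\varepsilon(\varphi)\,\delta\varphi\,\bu\cdot\bpsi\,\dx\,,\ 0\,\bigr)$, with the remainder bounded in $L^2(I_T;\bV^*)$ by $C\,\|\bu\|_{L^2(I_T;\bV)}\,\|\alpha_\varepsilon(\varphi+\delta\varphi)-\alpha_\varepsilon(\varphi)-\mathcal{A}_\varepsilon(\varphi)\delta\varphi\|_{L^2(\Omega)}=o(\|\delta\varphi\|)$ by Assumption~\ref{porousassum}\ref{en:assump-alpha-ii}; the continuity of both partial derivatives then follows from the $L^2$-continuity of $\varphi\mapsto\alpha_\varepsilon(\varphi)$ and $\varphi\mapsto\mathcal{A}_\varepsilon(\varphi)$ and the same H{\"o}lder bounds.

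It then remains to verify that $\partial_{\bu}\mathcal{G}(\ST(\varphi),\varphi)$ is a topological isomorphism of $W_T^2(\bV)$ onto $L^2(I_T;\bV^*)\times\bH$. For given $(\bF,\bud_0)$ the equation $\partial_{\bu}\mathcal{G}(\ST(\varphi),\varphi)\bud=(\bF,\bud_0)$ is exactly the weak linearised problem \eqref{weak:lintimeNS} with $\bu_1=\bu_2=\ST(\varphi)$ (with the $L^2$-pairing on the right replaced by the $\bV$--$\bV^*$ duality), whose unique solvability in $W_T^2(\bV)$, with continuous dependence on the data, is Theorem~\ref{theorem:lintimeexist} --- its Galerkin and energy argument being unaffected by the weaker right-hand side --- so $\partial_{\bu}\mathcal{G}(\ST(\varphi),\varphi)$ is a bounded linear bijection and hence an isomorphism by the open mapping theorem. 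Since $\ST(\varphi)$ is the \emph{unique} solution of $\mathcal{G}(\cdot,\varphi)=0$ by Theorem~\ref{theorem:timeexist}, the implicit function theorem then gives that $\ST$ is (at least) $C^1$ in a neighbourhood of $\varphi$ and that $\bud=\ST'(\varphi)\delta\varphi$ solves $\partial_{\bu}\mathcal{G}(\ST(\varphi),\varphi)\bud=-\partial_{\varphi}\mathcal{G}(\ST(\varphi),\varphi)\delta\varphi$, i.e. \eqref{poreq:lintimeNS} with $\bu_1=\bu_2=\ST(\varphi)$, $\bud_0=0$, and $\bF=-\alpha_\varepsilon'(\varphi)\delta\varphi\,\ST(\varphi)=-\mathcal{A}_\varepsilon(\varphi)\delta\varphi\,\ST(\varphi)$; this right-hand side does lie in $L^2(I_T;\bL^2(\Omega))$ by H{\"o}lder's inequality, since $\mathcal{A}_\varepsilon(\varphi)\in L^p(\Omega)$ with $p\ge4$, $\delta\varphi\in L^\infty(\Omega)$ and $\ST(\varphi)\in L^4(I_T;\bL^4(\Omega))$.

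The step I expect to be the main obstacle is not the implicit function theorem bookkeeping but, on the one hand, disposing cleanly of the non-openness of $\Phi_{ad}$, and, on the other, promoting the assumed \emph{scalar} $L^2$-Fr{\'e}chet differentiability of $\alpha_\varepsilon$ to Fr{\'e}chet differentiability of the map $\varphi\mapsto\bigl(\bpsi\mapsto\int_\Omega\alpha_\varepsilon(\varphi)\,\bu\cdot\bpsi\,\dx\bigr)$ with values in $L^2(I_T;\bV^*)$ and with a remainder uniform in the sense demanded by the implicit function theorem --- this is exactly where the two-dimensional Sobolev embeddings and the integrability exponent $p\ge4$ of Assumption~\ref{porousassum}\ref{en:assump-alpha-ii} enter. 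One must also keep careful track that every term of $\mathcal{G}_1$ genuinely takes values in $L^2(I_T;\bV^*)$, which rests on $\ST(\varphi)\in W_T^2(\bV)$ together with the Ladyzhenskaya estimate.
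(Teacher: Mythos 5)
Your proposal is correct and follows essentially the same route as the paper: the implicit function theorem applied to the operator $\mathcal{G}$, with the isomorphism property of $\partial_{\bu}\mathcal{G}(\ST(\varphi),\varphi)$ supplied by Theorem~\ref{theorem:lintimeexist} and the derivative identified via the chain rule as the solution of \eqref{poreq:lintimeNS} with $\bu_1=\bu_2=\ST(\varphi)$, $\bF=-\alpha_\varepsilon'(\varphi)\delta\varphi\,\ST(\varphi)$, $\bud_0=0$. You merely spell out details the paper leaves implicit (non-openness of $\Phi_{ad}$, the $\bV^*$-valued right-hand side in the isomorphism step, and the $L^2(I_T;\bL^2)$ regularity of $\bF$), which is consistent with, not divergent from, the paper's argument.
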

	
	\begin{proof}
		The differentiability of $\mathcal{G}: W_T^2(\bV)\times \Phi_{ad} \to L^2(I_T;\bV^*)\times \bH$ follows from the fact that it is at most of order two with respect to the variable $\bu\in W_T^2(\bV)$ and from  the Fr{\'e}chet differentiability of $\alpha_\varepsilon$.
		Now, let $(\bu^0,\varphi^0)\in W_T^2(\bV) \times\Phi_{ad}$ such that $\mathcal{G}(\bu^0,\varphi^0) = 0$ in $L^2(I_T;\bV^*)\times \bH$, which also implies that $\bu^0 = \ST(\varphi^0)$. 
		
		Note that, according to Theorem~\ref{theorem:lintimeexist}, the operator $\frac{\partial}{\partial\bu}\mathcal{G}(\bu^0,\varphi^0)\in \mathcal{L}(W^2_T(\bV), L^2(I_T;\bV^*)\times\bH  )$ whose components are defined as 
		\begin{align*}
			\int_0^T\left\langle \frac{\partial}{\partial\bu}\mathcal{G}_1(\bu^0,\varphi^0) \delta\bu, \bpsi \right\rangle_{\bV} \du t 
			= \int_0^T \langle \partial_t \delta\bu,\bpsi \rangle_{\bV} + a_{\varphi^0}(\delta\bu,\bpsi) + b(\delta\bu,\bu^0,\psi) + b(\bu^0,\delta\bu,\psi)\du t
		\end{align*}
		and $\frac{\partial}{\partial\bu}\mathcal{G}_2(\bu^0,\varphi^0) \delta\bu = \delta\bu(0)$ is an isomorphism. From implicit function theorem (cf. \cite{pata2019}) one finds a neighborhood $\mathcal{O}_{\bu}\times\mathcal{O}_\varphi \subset W_T^2(\bV)\times \Phi_{ad}$ of $(\bu^0,\varphi^0)$ and a differentiable operator $\mathcal{S}:\mathcal{O}_{\varphi}\to \mathcal{O}_{\bu} $ such that $\mathcal{G}(\mathcal{S}(\varphi),\varphi) = 0$ for any $\varphi \in \mathcal{O}_\varphi$. 
		By definition of $\ST$, we see that $\ST = \mathcal{S}$ in $\mathcal{O}_{\varphi}$ and $\ST$ is differentiable in $\mathcal{O}_{\varphi}$, and due to the arbitrary nature of $(\bu^0,\varphi^0)$ we see that $\ST$ is differentiable in $\Phi_{ad}$.

		To calculate its derivative, let $\varphi\in\Phi_{ad}$ and use chain rule to $\mathcal{G}(\ST(\varphi),\varphi) = 0$ so that
		\begin{align*}
			\frac{\partial}{\partial\bu}\mathcal{G}(\ST(\varphi),\varphi)[ \ST'(\varphi)\delta \varphi ] + \frac{\partial}{\partial\varphi}\mathcal{G}(\ST(\varphi),\varphi)\delta\varphi = 0.
		\end{align*}
		We also compute the components of $\frac{\partial}{\partial\varphi}\mathcal{G}(\ST(\varphi),\varphi)\delta\varphi$ as $\frac{\partial}{\partial\varphi}\mathcal{G}_1(\ST(\varphi),\varphi)\delta\varphi = \alpha_\varepsilon'(\varphi)\delta\varphi\, \ST(\varphi)$ and $\frac{\partial}{\partial\varphi}\mathcal{G}_2(\ST(\varphi),\varphi)\delta\varphi = 0$. From which, we infer that $$\bud = \ST'(\varphi)\delta\varphi = - \left[ \frac{\partial}{\partial\bu}\mathcal{G}(\ST(\varphi),\varphi) \right]^{-1}\frac{\partial}{\partial\varphi}\mathcal{G}(\ST(\varphi),\varphi)\delta\varphi$$ indeed solves \eqref{poreq:lintimeNS} with $\bu_1 = \bu_2 = \ST(\varphi)$, $\bF = -\alpha_\varepsilon'(\varphi)\delta\varphi\, \ST(\varphi) $, and $\bud_0 = 0$.
	\end{proof}
	Our next aim is to derive an adjoint equation, in order to derive first order optimality conditions.
	One of the operators that we touched on in the previous proof is the operator 
	\begin{align*}
		\left[ \frac{\partial}{\partial\bu}\mathcal{G}(\ST(\varphi),\varphi) \right]^{-1}:L^2(I_T;\bV^*)\times \{0\}_{\bH}\to W^2_0(\bV),  
	\end{align*}
	where $W^2_0(\bV):= \{\bv\in W_T^2(\bV) : \bv(0) = 0 \text{ in }\bH\}$, which according to Theorem~\ref{theorem:lintimeexist} is a bounded linear operator. This allows us to define the adjoint operator 
	\begin{align*}
		\left[ \frac{\partial}{\partial\bu}\mathcal{G}(\ST(\varphi),\varphi) \right]^{-*}:W^2_0(\bV)^*\to L^2(I_T;\bV)\times \{0\}_{\bH} .  
	\end{align*}
	The following lemma characterizes the action of the adjoint operator $\left[ \frac{\partial}{\partial\bu}\mathcal{G}(\ST(\varphi),\varphi) \right]^{-*}$.

	\begin{lemma}
		Let $\bG\in L^2(I_T;\bH)\subset W^2_0(\bV)^*$ and $\bua\in L^2(I_T;\bV)$ be such that $(\bua,0) = \left[ \frac{\partial}{\partial\bu}\mathcal{G}(\ST(\varphi),\varphi) \right]^{-*}\bG \in L^2(I_T;\bV)\times \{0\}_{\bH}$. Then, $\bua\in L^2(I_T;\bV)$ solves \eqref{poreq:adjtimeNS} with $\bG$ on the right-hand side, $\bu_1=\bu_2=\ST(\varphi)$, and $\bua(T) = 0$.
	\end{lemma}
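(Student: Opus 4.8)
The plan is to unwind the definition of the adjoint operator into a single variational identity and then to extract from it both the adjoint PDE and the terminal condition. Write $\mathcal{L}:=\frac{\partial}{\partial\bu}\mathcal{G}(\ST(\varphi),\varphi)$ and $\bu:=\ST(\varphi)$, and recall from Theorem~\ref{theorem:lintimeexist} that $\mathcal{L}:W^2_0(\bV)\to L^2(I_T;\bV^*)\times\{0\}_{\bH}$ is an isomorphism, so $\mathcal{L}^{-*}=(\mathcal{L}^*)^{-1}$ is well-defined on $W^2_0(\bV)^*$. The relation $(\bua,0)=\mathcal{L}^{-*}\bG$ is equivalent to $\mathcal{L}^*(\bua,0)=\bG$ in $W^2_0(\bV)^*$, which, by the defining property of the Banach-space adjoint and the explicit form of $\frac{\partial}{\partial\bu}\mathcal{G}_1$ recalled above, reads: for every $\bw\in W^2_0(\bV)$,
\[
\int_0^T \big[\langle\partial_t\bw,\bua\rangle_{\bV}+a_\varphi(\bw,\bua)+b(\bw,\bu,\bua)+b(\bu,\bw,\bua)\big]\du t=\langle\bG,\bw\rangle=\int_0^T(\bG,\bw)_\Omega\du t,
\]
where the last equality uses $\bG\in L^2(I_T;\bH)$, and the $\bH$-component of $\mathcal{L}\bw$ contributes nothing because $\mathcal{L}\bw$ has second component $\bw(0)=0$ and $(\bua,0)$ has second component $0$.

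From this identity I would first test with $\bw(t)=\zeta(t)\bpsi$ for arbitrary $\bpsi\in\bV$ and $\zeta\in C_c^\infty(0,T)$. Identifying $\langle\partial_t\bw,\bua\rangle_{\bV}=\zeta'(\bpsi,\bua)_\Omega$ through the Gelfand triple $\bV\hookrightarrow\bH\hookrightarrow\bV^*$, the identity becomes $\int_0^T\zeta'(\bpsi,\bua)_\Omega\du t+\int_0^T\zeta\big[a_\varphi(\bpsi,\bua)+b(\bpsi,\bu,\bua)+b(\bu,\bpsi,\bua)\big]\du t=\int_0^T\zeta(\bG,\bpsi)_\Omega\du t$. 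Since this holds for all such $\zeta$, a measurable-selection argument over a countable dense set of directions $\bpsi\in\bV$ shows that, for a.e.\ $t$, the scalar function $t\mapsto(\bpsi,\bua(t))_\Omega$ has distributional derivative $a_\varphi(\bpsi,\bua(t))+b(\bpsi,\bu(t),\bua(t))+b(\bu(t),\bpsi,\bua(t))-(\bG(t),\bpsi)_\Omega$; using the symmetry of $a_\varphi$ this is exactly the weak formulation \eqref{weak:adjtimeNS} with $\bu_1=\bu_2=\bu$. Estimating the trilinear terms by Ladyzhenskaya's inequality together with the regularity of $\bu=\ST(\varphi)$ from Proposition~\ref{prop:timestrong} (which gives $\|\nabla\bu\|_{\bL^2},\|\bu\|_{\bL^4}\in L^\infty(I_T)$) and $\|\bua\|_{\bL^4}\in L^2(I_T)$ coming from $\bua\in L^2(I_T;\bV)$, the right-hand side above defines an element of $L^2(I_T;\bV^*)$; hence $\partial_t\bua\in L^2(I_T;\bV^*)$, so $\bua\in W^2_T(\bV)\hookrightarrow C(\overline{I_T};\bH)$ and the trace $\bua(T)$ is meaningful in $\bH$.

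It remains to identify the terminal condition. I would return to the variational identity with a general $\bw\in W^2_0(\bV)$ and integrate $\int_0^T\langle\partial_t\bw,\bua\rangle_{\bV}\du t$ by parts in time via the Lions--Magenes formula, legitimate now that $\bw,\bua\in W^2_T(\bV)$; since $\bw(0)=0$ this produces the boundary term $(\bw(T),\bua(T))_\Omega$. Subtracting the weak adjoint equation from the previous paragraph, integrated against $\bw(t)$, then cancels all the remaining terms and leaves $(\bw(T),\bua(T))_\Omega=0$ for every $\bw\in W^2_0(\bV)$. Since every $\bpsi\in\bV$ arises as $\bw(T)$ for $\bw(t)=(t/T)\bpsi\in W^2_0(\bV)$, and $\bV$ is dense in $\bH$, we conclude $\bua(T)=0$ in $\bH$. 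Together with the previous step, $\bua$ solves \eqref{poreq:adjtimeNS} with right-hand side $\bG$, $\bu_1=\bu_2=\ST(\varphi)$, and $\bua(T)=0$.

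The part I expect to be most delicate is not the algebra of the duality pairing but the bootstrapping of temporal regularity: a priori the characterization only provides $\bua\in L^2(I_T;\bV)$, which is not enough to speak of $\bua(T)$ or to integrate by parts in time, and one must verify carefully — using the higher regularity of $\bu=\ST(\varphi)$ and interpolation estimates of Gagliardo--Nirenberg/Ladyzhenskaya type — that the trilinear contributions $b(\cdot,\bu,\bua)$ and $b(\bu,\cdot,\bua)$ lie in $L^2(I_T;\bV^*)$ so that $\partial_t\bua\in L^2(I_T;\bV^*)$ and $\bua\in C(\overline{I_T};\bH)$. One also has to keep careful track of the order of the arguments of $b$ throughout, so that the resulting equation is \eqref{poreq:adjtimeNS} and not its transpose.
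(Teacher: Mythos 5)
Your proposal is correct and follows essentially the same route as the paper's proof: both unwind the relation $(\bua,0)=\bigl[\frac{\partial}{\partial\bu}\mathcal{G}\bigr]^{-*}\bG$ into the variational identity $\int_0^T\langle\partial_t\bw,\bua\rangle+a_\varphi(\bw,\bua)+b(\bw,\bu,\bua)+b(\bu,\bw,\bua)\du t=\int_0^T(\bG,\bw)_\Omega\du t$, test with time-compactly-supported directions to obtain $\partial_t\bua\in L^2(I_T;\bV^*)$ (using the higher regularity of $\bu=\ST(\varphi)$), and then integrate by parts in time to read off the equation and the terminal condition. Your treatment of the terminal condition and of the trilinear-term estimates is in fact slightly more explicit than the paper's.
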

	\begin{proof}   
		Let $\bud\in W^2_0(\bV)$ be arbitrary and $(\bF,0) = \frac{\partial}{\partial\bu}\mathcal{G}(\ST(\varphi),\varphi) \bud$, and use the notation $\bu = \ST(\varphi)$.
		We have by integration by parts
		\begin{align*}
			&\left\langle (\bua,0), (\bF,0)  \right\rangle_{ L^2(I_T;\bV^*)\times \{0\}_{\bH} }\\
			&= \left\langle (\bua,0), \frac{\partial}{\partial\bu}\mathcal{G}(\ST(\varphi),\varphi) \bud \right\rangle_{ L^2(I;\bV^*)\times \{0\}_{\bH} }  = \int_0^T \langle \partial_t\bud,\bua \rangle + a_\varphi(\bud,\bua) + b(\bud,\bu,\bua) + b(\bu,\bud,\bua) \du t.
		\end{align*}
		On the other hand we have
		\begin{align*}
			\left\langle (\bua,0), (\bF,0)  \right\rangle_{ L^2(I_T;\bV^*)\times \{0\}_{\bH} } & = \left\langle \left[ \frac{\partial}{\partial\bu}\mathcal{G}(\ST(\varphi),\varphi) \right]^{-*}\bG, (\bF,0)  \right\rangle_{ L^2(I;\bV^*)\times \{0\}_{\bH} }\\
			& = \left\langle \bG, \left[ \frac{\partial}{\partial\bu}\mathcal{G}(\ST(\varphi),\varphi) \right]^{-1}(\bF,0)  \right\rangle_{ L^2(I_T;\bH)} = \int_0^T\int_\Omega \bG\cdot \bud \du x\du t.
		\end{align*}

		To be able to apply integration by parts with respect to the time integral, let us check the regularity of $\partial_t\bua$. 
		Let us momentarily take $\bud\in C_0^\infty(I_T;\bV)$ so that 
		\begin{align*}
			\int_0^T \langle \bud,\partial_t\bua \rangle \du t =  \int_0^T a_\varphi(\bud,\bua) + b(\bud,\bu,\bua) + b(\bu,\bud,\bua) \du t - \int_0^T\int_\Omega \bG\cdot \bud\du x \du t.
		\end{align*}
		Invoking the regularity of the state variable $\bu = \ST(\varphi)$ and $\bG\in L^2(I_T;\bH)$, we see that the map
		\begin{align*}
			\bud \mapsto \int_0^T a_\varphi(\bud,\bua) + b(\bud,\bu,\bua) + b(\bu,\bud,\bua) \du t - \int_0^T\int_\Omega \bG\cdot \bud\du x \du t
		\end{align*}
		belongs to $L^2(I_T;\bV^*)$, and consequently $\partial_t \bua\in L^2(I_T;\bV^*) $ and $\bua\in W^2(\bV)$. 
		
		We can thus employ integration by parts, so that if $\bua\in L^2(I_T;\bV)$ and by adding the term $(\dive \bud,\pa) = 0$ to be able to recover the pressure, we get
		\begin{align*}
			{(\bud(T),\bua(T))} + \int_0^T \int_\Omega \bud\cdot\left( -\partial_t\bua + \alpha_\varepsilon(\varphi)\bua -\mu\Delta\bua + (\nabla\bu)^{\top}\bua - (\bu\cdot\nabla)\bua + \nabla\pa\,\right) \du x\du t&\\ = \int_0^T\int_\Omega \bG\cdot \bud\du x \du t&
		\end{align*}
		The arbitrary nature of $\bud\in W_0^2(\bV)$ proves our claim.
	\end{proof}

	From the operators and the discussions above, we are afforded to formulate the necessary optimality condition for a local solution---defined as an element $\varphi^T\in\Phi_{ad}$ such that $J_T(\varphi^T)\le J_T(\varphi)$ for all $\varphi\in \Phi_{ad}$ satisfying $\|\varphi^T-\varphi \|_{H^1(\Omega)}\le \delta$ for some $\delta>0$--- of the optimization problem \eqref{reduced:objphasetime}.
	
	\begin{theorem}
		Suppose that Assumption \ref{objassump} and the assumptions in Proposition \ref{prop:frech-design} hold.
		If $\varphi^T\in \Phi_{ad}$ is a minimizer for $J_T$, then 
		\begin{align}\label{variationalineq:time}
			\begin{aligned}
				0\le&\, \frac{\gamma}{2c_0}\int_\Omega \left[\varepsilon\nabla\varphi^T\cdot \nabla(\varphi - \varphi^T) + \frac{1}{\varepsilon}\Psi_0'(\varphi^T) (\varphi-\varphi^T)\right]\du x \\
				&\, + \frac{1}{T}\int_{\Omega_T} \left({\mathcal{B}}_\varepsilon(\varphi^T)|\bu|^2 - \mathcal{A}_\varepsilon(\varphi^T)\bu\cdot\bua + \frac{1}{2}|\bu-\bu_d|^2\one_{\omega}\right) (\varphi-\varphi^T) \du x\du t
			\end{aligned}\qquad \forall \varphi\in\Phi_{ad}
		\end{align}
		where $\bu = \ST(\varphi^T)$ and $\bua \in \bW^{2,1}_{2,T}$ is such that $(\bua,0) = \left[ \frac{\partial}{\partial\bu}\mathcal{G}(\ST(\varphi),\varphi) \right]^{-*}\left( ({\varphi^T + 1}) \left(\bu - \bu_d\right)\one_\omega + 2\alphat_\varepsilon(\varphi^T)\bu\right)$, i.e., $\bua$ solves the adjoint equation.
	\end{theorem}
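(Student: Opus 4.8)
The plan is to derive the variational inequality by combining the first-order condition for the reduced functional with the adjoint characterization established in the preceding lemma. First I would note that since $\varphi^T$ is a minimizer over the convex set $\Phi_{ad}$, for any $\varphi\in\Phi_{ad}$ and $t\in(0,1]$ we have $\varphi^T + t(\varphi-\varphi^T)\in\Phi_{ad}$, so that $\frac{1}{t}\bigl(J_T(\varphi^T + t(\varphi-\varphi^T)) - J_T(\varphi^T)\bigr)\ge 0$; letting $t\to 0^+$ yields $\langle J_T'(\varphi^T),\varphi-\varphi^T\rangle\ge 0$ provided $J_T$ is Gateaux differentiable at $\varphi^T$. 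The differentiability of the tracking and porous-medium terms follows from Proposition~\ref{prop:frech-design} together with Assumption~\ref{porousassum}\ref{en:assump-alpha-ii}, while the Ginzburg--Landau term $\mathbb{E}_\varepsilon$ requires care because of the double-obstacle potential: here one uses that $\Psi = \Psi_0 + I_{[-1,1]}$, that $I_{[-1,1]}$ is already encoded in the constraint $\varphi\in\Phi_{ad}$, and that the directional derivative of $\int_\Omega\frac{\varepsilon}{2}|\nabla\varphi|^2 + \frac{1}{\varepsilon}\Psi_0(\varphi)\,\du x$ in an admissible direction is $\int_\Omega\varepsilon\nabla\varphi^T\cdot\nabla(\varphi-\varphi^T) + \frac{1}{\varepsilon}\Psi_0'(\varphi^T)(\varphi-\varphi^T)\,\du x$, which accounts for the first line of \eqref{variationalineq:time}.

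Next I would compute the directional derivative of the state-dependent terms explicitly. Writing $\bu = \ST(\varphi^T)$ and $\bud = \ST'(\varphi^T)(\varphi-\varphi^T)$ (the latter being the weak solution of \eqref{poreq:lintimeNS} with $\bu_1 = \bu_2 = \bu$, $\bF = -\alpha_\varepsilon'(\varphi^T)(\varphi-\varphi^T)\bu = -\mathcal{A}_\varepsilon(\varphi^T)(\varphi-\varphi^T)\bu$, and $\bud_0 = 0$, by Proposition~\ref{prop:frech-design}), the chain rule gives the derivative of $\frac{1}{T}\int_{\Omega_T}\bigl(\frac{\varphi+1}{2}\bigr)|\ST(\varphi)-\bu_d|^2\one_\omega$ as
\begin{align*}
\frac{1}{T}\int_{\Omega_T}\left[\frac{1}{2}|\bu-\bu_d|^2\one_\omega(\varphi-\varphi^T) + (\varphi^T+1)(\bu-\bu_d)\cdot\bud\,\one_\omega\right]\du x\du t,
\end{align*}
and the derivative of $\frac{1}{T}\int_{\Omega_T}\alphat_\varepsilon(\varphi)|\ST(\varphi)|^2$ as
\begin{align*}
\frac{1}{T}\int_{\Omega_T}\left[\mathcal{B}_\varepsilon(\varphi^T)|\bu|^2(\varphi-\varphi^T) + 2\alphat_\varepsilon(\varphi^T)\bu\cdot\bud\right]\du x\du t,
\end{align*}
using Assumption~\ref{porousassum}\ref{en:assump-alpha-ii} for the differentiability of $\alpha_\varepsilon$ and $\alphat_\varepsilon$. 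Adding these, the terms linear in $\bud$ combine into $\frac{1}{T}\int_{\Omega_T}\bigl[(\varphi^T+1)(\bu-\bu_d)\one_\omega + 2\alphat_\varepsilon(\varphi^T)\bu\bigr]\cdot\bud\,\du x\du t$, which is precisely the dual pairing of the right-hand side data of the adjoint equation with $\bud$.

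The final step is the adjoint elimination. By the lemma characterizing $\bigl[\frac{\partial}{\partial\bu}\mathcal{G}(\ST(\varphi^T),\varphi^T)\bigr]^{-*}$, the adjoint state $\bua$ solving \eqref{poreq:adjtimeNS} with right-hand side $\bG := (\varphi^T+1)(\bu-\bu_d)\one_\omega + 2\alphat_\varepsilon(\varphi^T)\bu$, $\bu_1 = \bu_2 = \bu$, and $\bua(T)=0$ satisfies $\int_0^T\int_\Omega\bG\cdot\bud\,\du x\du t = \langle\partial_t\bud,\bua\rangle + a_{\varphi^T}(\bud,\bua) + b(\bud,\bu,\bua) + b(\bu,\bud,\bua)$ integrated over $(0,T)$; but $\bud$ solves \eqref{weak:lintimeNS} tested with $\bpsi = \bua$, so this equals $\int_0^T(\bF,\bua)_\Omega\,\du t = -\int_0^T\int_\Omega\mathcal{A}_\varepsilon(\varphi^T)(\varphi-\varphi^T)\bu\cdot\bua\,\du x\du t$. (Here one should check that $\bua$ and $\bud$ have enough regularity for the integration by parts in time, which is supplied by Proposition~\ref{prop:timestrong} and its corollary, giving $\bua\in\bW^{2,1}_{2,T}$.) Substituting this back, the $\bud$-dependent contribution becomes $-\frac{1}{T}\int_{\Omega_T}\mathcal{A}_\varepsilon(\varphi^T)\bu\cdot\bua\,(\varphi-\varphi^T)\,\du x\du t$, and collecting everything gives \eqref{variationalineq:time}. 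The main obstacle I expect is justifying the Gateaux differentiability of $J_T$ at $\varphi^T$ cleanly — in particular handling the double-obstacle potential term and confirming that the Fr\'echet differentiability of $\ST$ from Proposition~\ref{prop:frech-design} (which is stated under the hypotheses of Proposition~\ref{prop:timestrong}, i.e. for sufficiently regular directions) transfers to directions $\varphi-\varphi^T$ with only $H^1\cap L^\infty$ regularity, and making sure all the integration-by-parts and duality manipulations with the adjoint are legitimate in the available function spaces.
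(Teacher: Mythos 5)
Your proposal is correct and follows essentially the same route as the paper: split $J_T$ into the smooth part and the obstacle/indicator contribution (which you absorb into the constraint via convexity of $\Phi_{ad}$, while the paper phrases the same step through $0\in\partial J_T(\varphi^T)$ and the subdifferential of $\boldsymbol{I}_{[-1,1]}$), differentiate the reduced functional by the chain rule using Proposition~\ref{prop:frech-design}, and eliminate the linearized state through the adjoint characterization lemma, with the regularity $\bua\in\bW^{2,1}_{2,T}$ supplied exactly as in the paper. Your closing worry about directions of only $H^1\cap L^\infty$ regularity is not an issue, since Proposition~\ref{prop:frech-design} asserts differentiability of $\ST$ on $\Phi_{ad}$ itself, so directions in $\{-\varphi^T+\Phi_{ad}\}$ are admissible.
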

	\begin{proof}
		
		We write the objective functional as 
		\begin{align*}
			J_T(\varphi) =  \mathcal{J}_T(\varphi) + \frac{\gamma}{2c_0\varepsilon}\int_\Omega I_{[-1,1]}(\varphi) \du x =:  \mathcal{J}_T(\varphi) + \boldsymbol{I}_{[-1,1]}(\varphi), 
		\end{align*}
		where $\mathcal{J}_T$ is Fr{\'e}chet differentiable in $\Phi_{ad}$.        
		The differentiability of $\mathcal{J}_T$ follows from the differentiability of the $L^2$-norms and the differentiability of $\alpha_\varepsilon$, and $\ST$ from Proposition~\ref{prop:frech-design}.
		
		Since $\varphi^T\in\Phi_{ad}$ is a minimizer, then $0\in \partial J_T(\varphi^T)$.
		Hence, we have $-\mathcal{J}_T'(\varphi^T) \in \partial\boldsymbol{I}_{[-1,1]}(\varphi^T)$.
		By definition, we have $\boldsymbol{I}_{[-1,1]}(\varphi^T) - \boldsymbol{I}_{[-1,1]}(\varphi) \le \mathcal{J}_T'(\varphi^T)(\varphi-\varphi^T)$ for arbitrary $\varphi \in \Phi_{ad}$.
		Since $\varphi^T,\varphi\in\Phi_{ad}$, i.e., $|\varphi^T|\le 1$ and  $|\varphi|\le 1$ a.e. in $\Omega$, we have $\boldsymbol{I}_{[-1,1]}(\varphi^T) - \boldsymbol{I}_{[-1,1]}(\varphi) = 0$.

		Let $\delta\varphi\in \{- \varphi^T+ \Phi_{ad}\}$ be arbitrary. 
		We use chain rule to get
		\begin{align*}
			\mathcal{J}_T'(\varphi^T)\delta\varphi =&  \frac{1}{T}\int_{\Omega_T}2\left[ \left( \frac{\varphi^T + 1}{2} \right) (\bu-\bu_d)\one_\omega + \alphat_\varepsilon(\varphi^T) \bu \right]\cdot \ST'(\varphi^T)\delta\varphi  \du x\du t\\
			&\ + \frac{1}{T}\int_{\Omega_T}\frac{\delta\varphi}{2}  |\bu-\bu_d|^2 \one_\omega + \alphat_\varepsilon'(\varphi^T)\delta\varphi|\bu|^2 \du x\du t\\
			&\ + \frac{\gamma}{2c_0} \int_\Omega \varepsilon\nabla\varphi^T\cdot\nabla\delta\varphi + \frac{1}{\varepsilon}\Psi_0'(\varphi^T)\delta\varphi \du x.
		\end{align*} 
		Note that by definition $\ST'(\varphi^T)\delta\varphi = \left[ \frac{\partial}{\partial\bu}\mathcal{G}(\ST(\varphi^T),\varphi^T) \right]^{-1}( -\alpha_\varepsilon'(\varphi^T)\delta\varphi \bu, 0)$. 
		
		From the definition of the adjoint operator $\left[ \frac{\partial}{\partial\bu}\mathcal{G}(\ST(\varphi^T),\varphi^T) \right]^{-*}$, we can rewrite the first integral as 
		\begin{align*}
			&\frac{1}{T}\int_{\Omega_T}2\left[ \left( \frac{\varphi^T + 1}{2} \right) (\bu-\bu_d)\one_\omega + \alphat_\varepsilon(\varphi^T) \bu \right]\cdot \ST'(\varphi^T)\delta\varphi  \du x\du t\\ & = \frac{1}{T}\left\langle ({\varphi^T + 1}) (\bu-\bu_d)\one_\omega + \alphat_\varepsilon(\varphi^T) \bu, \left[ \frac{\partial}{\partial\bu}\mathcal{G}(\ST(\varphi^T),\varphi^T) \right]^{-1}( -\alpha_\varepsilon'(\varphi^T)\delta\varphi \bu, 0)  \right\rangle_{W^2_0(\bV)}\\
			& = \frac{1}{T}\left\langle (\bua,0),( -\alpha_\varepsilon'(\varphi^T)\delta\varphi \bu, 0)   \right\rangle_{L^2(I_T;\bV^*)\times\{0\}_{\bH} } = \frac{1}{T}\int_{\Omega_T} \bua\cdot (-\alpha_\varepsilon'(\varphi^T)\delta\varphi \bu) \du x\du t.
		\end{align*}

		After rearrangement, by invoking Assumption \ref{porousassum}(ii) and by using $\mathcal{J}_T'(\varphi^T)(\varphi-\varphi^T)\ge 0$, we infer that
		\begin{align*}
			0 &\le \mathcal{J}_T'(\varphi^T)(\varphi-\varphi^T) = \frac{\gamma}{2c_0} \int_\Omega \varepsilon\nabla\varphi^T\cdot\nabla(\varphi- \varphi^T) + \frac{1}{\varepsilon}\Psi_0'(\varphi^T)(\varphi- \varphi^T) \du x \\
			&\ + \frac{1}{T}\int_{\Omega_T} \left( \mathcal{B}_{\varepsilon}(\varphi^T)|\bu|^2 - \mathcal{A}_{\varepsilon}(\varphi^T)\bu\cdot\bua + \frac{1}{2}|\bu-\bu_d|^2\one_\omega \right)(\varphi- \varphi^T) \du x\du t .
		\end{align*} 
		Lastly, the property that $\bua\in \bW^{2,1}_{2,T}$ follows from the facts that $({\varphi^T + 1}) \left(\bu - \bu_d\right)\one_\omega + 2\beta_\varepsilon(\varphi^T)\bu \in L^2(I_T;\bL^2(\Omega)^2)$ and $\bu\in\bW^{2,1}_{2,T} $.
	\end{proof}

	In the case of the stationary problem, the first order optimality condition has been well-studied. In fact we refer to \cite{garcke2018}, where the authors considered a generalized objective functional with integral state constraints. Adapting their computations to our case accordingly, i.e. the tracking type functional and removing the integral constraints, we arrive at the following optimality system.
	
	\begin{theorem}
		Suppose that Assumption \ref{objassump} and the assumptions in Proposition \ref{prop:statstrong} hold. If $\varphi^s\in \Phi_{ad}$ is a minimizer for $J_s$, then 
		\begin{align}\label{variationalineq:stat}
			\begin{aligned}
				0\le&\, \frac{\gamma}{2c_0}\int_\Omega \left[\varepsilon\nabla\varphi^s\cdot \nabla(\varphi - \varphi^s) + \frac{1}{\varepsilon}\Psi_0'(\varphi^s) (\varphi-\varphi^s)\right]\du x \\
				&\, + \int_{\Omega} \left({\mathcal{B}}_\varepsilon(\varphi^s)|\bv|^2 - \mathcal{A}_\varepsilon(\varphi^s)\bv\cdot\bva + \frac{1}{2}|\bv-\bu_d|^2\one_{\omega}\right) (\varphi-\varphi^s) \du x
			\end{aligned}\qquad \forall \varphi\in\Phi_{ad}
		\end{align}
		where $\bv = \Ss(\varphi^s)$ and $\bva \in \bV$ is the weak solution to \eqref{poreq:adjstatNS} with $\bv_1=\bv_2 = \bv$ and $\bG_s = ({\varphi^s + 1}) \left(\bv - \bu_d\right)\one_\omega + 2\alphat_\varepsilon(\varphi^s)\bv$.
	\end{theorem}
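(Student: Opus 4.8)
The plan is to mirror the three-stage argument used for the time-dependent optimality system \eqref{variationalineq:time}, which here is considerably simpler because the stationary state, linearized and adjoint equations are elliptic and already well posed in $\bV$; the analogous computation in a more general setting is carried out in \cite{garcke2018}, so the proof can be kept brief. \textbf{Step 1 (differentiability and splitting of $J_s$).} First I would record the stationary counterpart of Proposition~\ref{prop:frech-design}: encoding the weak form \eqref{variation:stat} through an operator equation $\mathcal{G}_s(\bv,\varphi)=0$ and applying the implicit function theorem — whose hypothesis, invertibility of $\partial_{\bv}\mathcal{G}_s(\Ss(\varphi),\varphi)$, is exactly the unique solvability provided by Theorem~\ref{theorem:linstatexist}, applicable since $\bv_1=\Ss(\varphi)$ satisfies the smallness bound \eqref{energy:statsmall} by Theorem~\ref{theorem:statexist} together with Assumption~\ref{statdatassum} — one obtains that $\Ss:\Phi_{ad}\to\bV$ is Fréchet differentiable with $\bvd:=\Ss'(\varphi)\delta\varphi$ the unique weak solution of \eqref{poreq:linstatNS} for $\bv_1=\bv_2=\Ss(\varphi)$ and $\bF_s=-\alpha_\varepsilon'(\varphi)\delta\varphi\,\Ss(\varphi)$. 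Together with the differentiability of the $L^2$-tracking integrand, of the penalization integrand (via the Fréchet differentiability of $\alphat_\varepsilon$ from Assumption~\ref{porousassum}\ref{en:assump-alpha-ii}), and of the smooth part $\Psi_0$ of the double-obstacle potential, this gives the splitting $J_s=\mathcal{J}_s+\boldsymbol{I}_{[-1,1]}$ with $\mathcal{J}_s$ Fréchet differentiable on $\Phi_{ad}$ and $\boldsymbol{I}_{[-1,1]}(\varphi)=\frac{\gamma}{2c_0\varepsilon}\int_\Omega I_{[-1,1]}(\varphi)\,\dx$ convex.

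\textbf{Step 2 (variational inequality for $\mathcal{J}_s$).} Since $\varphi^s$ minimizes $J_s$, one has $0\in\partial J_s(\varphi^s)$, hence $-\mathcal{J}_s'(\varphi^s)\in\partial\boldsymbol{I}_{[-1,1]}(\varphi^s)$; as $\varphi^s$ and any competitor $\varphi$ both obey $|\cdot|\le 1$ a.e., the indicator contributions cancel and one is left with $\mathcal{J}_s'(\varphi^s)(\varphi-\varphi^s)\ge 0$ for every $\varphi\in\Phi_{ad}$. Writing $\bv=\Ss(\varphi^s)$, $\bvd=\Ss'(\varphi^s)\delta\varphi$ and $\delta\varphi=\varphi-\varphi^s$, the chain rule yields
\begin{align*}
\mathcal{J}_s'(\varphi^s)\delta\varphi &= \int_\Omega \big[(\varphi^s+1)(\bv-\bu_d)\one_\omega + 2\alphat_\varepsilon(\varphi^s)\bv\big]\cdot\bvd\,\dx + \int_\Omega\Big(\tfrac{1}{2}|\bv-\bu_d|^2\one_\omega + \mathcal{B}_\varepsilon(\varphi^s)|\bv|^2\Big)\delta\varphi\,\dx \\
&\quad + \tfrac{\gamma}{2c_0}\int_\Omega\Big(\varepsilon\nabla\varphi^s\cdot\nabla\delta\varphi + \tfrac{1}{\varepsilon}\Psi_0'(\varphi^s)\,\delta\varphi\Big)\dx,
\end{align*}
where $\mathcal{B}_\varepsilon(\varphi^s)\delta\varphi=\alphat_\varepsilon'(\varphi^s)\delta\varphi$ by Assumption~\ref{porousassum}\ref{en:assump-alpha-ii}, and where I have already combined the two contributions multiplying $\bvd$ into $\bG_s:=(\varphi^s+1)(\bv-\bu_d)\one_\omega + 2\alphat_\varepsilon(\varphi^s)\bv$.

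\textbf{Step 3 (eliminating $\bvd$ by duality).} This is the technical heart. Let $\bva\in\bV$ be the unique weak solution of \eqref{poreq:adjstatNS} with $\bv_1=\bv_2=\bv$ and right-hand side $\bG_s$, which exists by Theorem~\ref{theorem:adjstatexist} because $\bv$ obeys \eqref{energy:statsmall}. Testing \eqref{variation:linstat} (with $\bv_1=\bv_2=\bv$ and $\bF_s=-\alpha_\varepsilon'(\varphi^s)\delta\varphi\,\bv$) by $\bpsi=\bva$ and \eqref{variation:adjstat} (with $\bv_1=\bv_2=\bv$ and right-hand side $\bG_s$) by $\bpsi=\bvd$, the symmetry of $a_\varphi$ and the coincidence of the trilinear terms $b(\bvd,\bv,\bva)+b(\bv,\bvd,\bva)$ in both identities give $(\bG_s,\bvd)_\Omega=(\bF_s,\bva)_\Omega$, i.e.
\begin{align*}
\int_\Omega \bG_s\cdot\bvd\,\dx = (\bF_s,\bva)_\Omega = -\int_\Omega\alpha_\varepsilon'(\varphi^s)\delta\varphi\,\bv\cdot\bva\,\dx = -\int_\Omega\mathcal{A}_\varepsilon(\varphi^s)\,\bv\cdot\bva\,\delta\varphi\,\dx,
\end{align*}
the last equality again by Assumption~\ref{porousassum}\ref{en:assump-alpha-ii}. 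Substituting this into the chain-rule expression and invoking $\mathcal{J}_s'(\varphi^s)(\varphi-\varphi^s)\ge 0$ produces precisely \eqref{variationalineq:stat}.

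I do not anticipate a genuine obstacle; the argument is routine adjoint calculus. The two places requiring care are the well-posedness chain — verifying that $\bv=\Ss(\varphi^s)$ satisfies \eqref{energy:statsmall} so that Theorems~\ref{theorem:linstatexist} and~\ref{theorem:adjstatexist} apply to $\bvd$ and $\bva$ — and confirming that every duality pairing is legitimate, which follows from $\bv,\bvd,\bva\in\bV\cap\bH^2(\Omega)$ via Proposition~\ref{prop:statstrong} and $\bu_d\in\bL^2(\omega)$. In contrast with the time-dependent case, no integration by parts in time and no regularity bootstrap for a time derivative of the adjoint variable is needed, which is the essential simplification.
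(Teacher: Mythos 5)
Your proof is correct and fills in exactly the argument the paper intends: the paper itself only refers to \cite{garcke2018} and to the time-dependent case, noting that one splits $J_s$ into a differentiable part and the singular indicator part, which is precisely your Steps 1--2, and your Step 3 is the standard adjoint elimination via the symmetry of $a_\varphi$ and the matching trilinear terms in \eqref{variation:linstat} and \eqref{variation:adjstat}. All the well-posedness prerequisites you invoke (the smallness bound \eqref{energy:statsmall} for $\bv$, Theorems~\ref{theorem:linstatexist} and \ref{theorem:adjstatexist}) are the right ones, so nothing is missing.
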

	
	Note that to be able to come up with the variational inequality \eqref{variationalineq:stat}, just as in the case of the time-dependent problem, we split the objective functional $J_s$ into its differentiable and singular parts. 
	
	Using analogous notation for the stationary problem, we can write the variational inequalities \eqref{variationalineq:time} and \eqref{variationalineq:stat} as follows:
	\begin{align*}
		\mathcal{J}_{\times}'(\varphi^{\times})(\varphi - \varphi^\times) \ge 0\quad \forall\varphi\in\Phi_{ad},
	\end{align*}
	where $\times\in\{s,T \}$ and $\varphi^\times\in\Phi_{ad}$ is a minimizer for $J_\times$.


	\section{Long time behavior of minima}
	\label{sec:LongTimeBehavior}
	
	The results above have now put us in a position to prove our main result. In Theorem~\ref{thm:asymp} we provide an asymptotic bound for the gap between the evaluation of the objective functions of the time-dependent and the stationary optimization problems with their respective optimal solutions. 
	The steps required to prove such result utilize the established energy estimates of the involved state equations in Section~\ref{sec:solGovEq}.
	
	A consequence of Theorem~\ref{thm:asymp} is that it provides us with a way to establish the uniform (with respect to the final time $T$) boundedness of solutions of the time-dependent optimization problem. This allows us to prove in Theorem~\ref{thm:phiInftyIsMin} that a sequence of solutions for \eqref{objphasetime} converges to a minimizer of \eqref{objphasestat}. 
	This proof is more nuanced than the proof of the asymptotic bound as we shall employ the first order optimality condition for the time-dependent problem.

	\begin{theorem} 
		\label{thm:asymp}
		Suppose that the assumptions in Theorem \ref{thm:minexist} hold. Let $T>0$, and $\varphi^T\in \Phi_{ad}$ and $\varphi^s\in \Phi_{ad}$ be  global minimizers of \eqref{reduced:objphasetime} and \eqref{reduced:objphasestat}, respectively. 
		Then it holds
		\begin{align}
			|J_T(\varphi^T) - J_s(\varphi^s)| \le \left(  \frac{C_1}{T} + \frac{C_2}{\sqrt{T}} \right) \label{goal}
		\end{align}
		for some constants $C_1,C_2>0$ independent of $T$ such that, 
        \jshsb{for some $C>0$ also independent of $T$,
        \begin{align*}
            C_1\le C(\sqrt{K}+\|\bu_0\|_{\bH} + \|\bu_d\|_{\bL^2(\omega)} + \|\bf_s\|_{\bL^2})\quad\text{ and }\quad C_2 \le C\|\bf_s\|_{\bL^2}.
        \end{align*}}
	\end{theorem}
	\begin{proof}
		Let $\bu^s = \ST(\varphi^s)$ and $\bv^s = \Ss(\varphi^s)$. Since $\varphi^T$ is a minimizer of $J_T$ we have with a generic constant $C>0$ independent of $T$
		\begin{align}
			& J_T(\varphi^T) - J_s(\varphi^s) \le J_T(\varphi^s) - J_s(\varphi^s) \nonumber\\
			& = \frac{1}{T}\left[ \int_{\Omega_T} \left(\frac{\varphi^s + 1}{2} \right)(\bu^s - \bv^s)\cdot(\bu^s+\bv^s-2\bu_d) \one_\omega \du x\du t + \int_{\Omega_T} \alphat_\varepsilon(\varphi^s)(\bu^s - \bv^s)\cdot(\bu^s+\bv^s)  \du x\du t \right]\nonumber\\
			& \le \frac{C}{T} \int_0^T  \big\{ 
            \left(\|\bu^s \|_{\bH} + \|\bv^s \|_{\bH} + 2\|\bu_d \|_{\bL^2(\omega)} \right) + \| \alphat_\varepsilon(\varphi^s) \|_{L^\infty}\!\left(\|\bu^s \|_{\bH} + \|\bv^s \|_{\bH} \right) \big\} 
            \|\bu^s - \bv^s \|_{\bH}  \du t\nonumber\\
            %
            %
            & \jshsb{
            \leq \frac{C}{T}\max\{1,\overline \beta_\epsilon\}\left( \|\bu^s\|_{L^\infty(\bH)}  + \|\bv^s \|_{\bV} + \|\bu_d \|_{\bL^2(\omega)}\right) \|\bu^s - \bv^s\|_{L^1(\bH)}
            }\nonumber\\
            &\jshsb{\leq \frac{C}{T}\left( \|\bf\|_{L^2(\bL^2)} + \|\bu_0\|_{\bH} + \|\bf_s\|_{\bL^2} + \|\bu_d \|_{\bL^2(\omega)} \right)\|\bw^s\|_{L^1(\bH)}\label{gapest:1}
            }
		\end{align}
		where we used \eqref{energy:time} and \eqref{energy:stat}$\bw^s := \bu^s - \bv^s$.
        The next task is to obtain an estimate for the term $\|\bw^s \|_{L^1(\bH)}$. To do so, we note that the element $\bw^s$ satisfies the equations
		\begin{align}\label{poreq:ws}
			&\left\{
			\begin{aligned}
				\partial_t\bw^s + \alpha_\varepsilon(\varphi)\bw^s - \mu\Delta\bw^s + (\bu^s\cdot\nabla)\bw^s + (\bw^s\cdot\nabla)\bv^s + \nabla q & = \bf - \bf_s &&\text{ in }\Omega_T,\\
				\dive\bw^s& = 0 && \text{ in }\Omega_T,\\
				\bw^s & = 0 &&\text{ on }\Sigma_T,\\
				\bw^s(0) & = \bu_0 - \bv^s &&\text{ in }\Omega.
			\end{aligned}
			\right.
		\end{align}
		Note that \eqref{poreq:ws} is of the form \eqref{poreq:lintimeNS} with $\bu_1 = \bv^s$, $\bu_2 = \bu^s$, $\bF = \bf - \bf_s$, and $\bud_0 = \bu_0-\bv^s$. 
		By taking $\bz = \frac{c_P}{\mu}\bf_s$ in Proposition~\ref{prop:decaylin} we get
		\begin{align*}
			\|\bw^s(t)\|_{\bH}^2 \le e^{A(t)}\|\bud_0\|_{\bH}^2 + \frac{2c_P^2}{\mu}\int_0^te^{A(t)-A(\tau)}\|\bF(\tau)\|_{\bL^2}^2 \du \tau,
		\end{align*}
		where
		\begin{align*}
			A(t) = -\left( \frac{\mu}{c_P^2} - \frac{2c_L^2c_P^2}{\mu^3}\|\bf_s\|_{\bL^2} \right)t = -\varsigma t,
		\end{align*}
		Assumption~\ref{sourceassump} thus imply that
		\begin{align}
            \label{wL2}
            \begin{aligned}
    			\|\bw^s(t)\|_{\bH}^2 & \le e^{-\varsigma t}\|\bu_0 - \bv^s \|_{\bL^2}^2 + \frac{2c_p^2}{\mu}\int_0^t e^{-\varsigma(t-\tau)}\|\bf(\tau) - \bf_s \|_{\bL^2}^2 \du\tau\\ & \le e^{-\varsigma t}\left( \|\bu_0 - \bv^s \|_{\bL^2}^2 + \frac{2c_p^2}{\mu}K \right).
            \end{aligned}
		\end{align}
        \jshsb{Taking the integral of the square root of \eqref{wL2} yields
        \begin{align*}
            \|\bw^s\|_{L^1(\bH)} & \le \frac{2}{\varsigma}\left(\|\bu_0 - \bv^s \|_{\bL^2}^2 + \frac{2c_p^2}{\mu}K \right)^{\!1/2}\left( 1- e^{-\frac{\varsigma}{2} T}  \right) \le \frac{2}{\varsigma}\left(\|\bu_0 - \bv^s \|_{\bL^2}^2 + \frac{2c_p^2}{\mu}K \right)^{\!1/2} =: C_{D}.
        \end{align*}
        }
        
		Due to Assumption~\ref{statdatassum} we note that $\mu^4 - 2c_L^2c_P^4\|\bf_s\|_{\bL^2}^2 > 0$, which implies the positivity of $\varsigma$, hence, the uniform boundedness of
        $\|\bw^s\|_{L^1(\bH)}$.
		
		Plugging the attained estimate for $\bw^s$ into \eqref{gapest:1}, together with \eqref{energy:time} and \eqref{energy:stat}, we infer that
        \jshsb{
		\begin{align*}
			J_T(\varphi^T) - J_s(\varphi^s)
            &\leq \frac{C}{T}\left( \|\bf\|_{L^2(\bL^2)} + \|\bu_0\|_{\bH} + \|\bf_s\|_{\bL^2} + \|\bu_d \|_{\bL^2(\omega)} \right)\|\bw^s\|_{L^1(\bH)}
            \\
            &\leq \frac{C}{T}\left( \|\bf\|_{L^2(\bL^2)} + \|\bu_0\|_{\bH} + \|\bf_s\|_{\bL^2} + \|\bu_d \|_{\bL^2(\omega)} \right)
		\end{align*} 
		We note that the constant $C>0$ above is independent of $T$.
		Lastly, from \eqref{connect:ffs} we have
		\begin{align}\label{ineq:t-s}
            \begin{aligned}
    			J_T(\varphi^T) - J_s(\varphi^s)&\le  \frac{C}{T}
    			\left( \sqrt{K} + \|\bu_0\|_{\bH} + \|\bu_d \|_{\bL^2(\omega)} + \|\bf_s \|_{\bL^2}\left(1+ \sqrt{T} \right) \right)\\
                &\le 
                \frac{C}{T}\left(\sqrt{K}+ \|\bu_0\|_{\bH} + \|\bu_d \|_{\bL^2(\omega)} + \|\bf_s \|_{\bL^2}\right) 
                 + \frac{C}{\sqrt{T}}\|\bf_s \|_{\bL^2} 
             \end{aligned}
		\end{align}}
		%
		Starting with 
		\begin{align*}
			J_s(\varphi^s) - J_T(\varphi^T)
			\le J_s(\varphi^T) - J_T(\varphi^T)
		\end{align*}
		and  using similar arguments as above it holds
		\begin{align*}
			J_s(\varphi^s) - J_T(\varphi^T)
			\le \left( \frac{C_1}{T} + \frac{C_2}{\sqrt{T}} \right),
		\end{align*}
		which concludes the proof.
	\end{proof}
	
	\begin{remark}
		\label{rm:JTs_m_JsT-bounded}
		We note that the computations in the proof above estimated the terms $J_T(\varphi^s) - J_s(\varphi^s)$ and $J_T(\varphi^T) - J_s(\varphi^T)$ to obtain the desired bound. Using similar line of arguments, we can show that
		\begin{align*}
			\max\{ |J_T(\varphi^s) - J_s(\varphi^s)|,|J_T(\varphi^T) - J_s(\varphi^T)| \}\le \left( \frac{C_1}{T} + \frac{C_2}{\sqrt{T}} \right).
		\end{align*}
		This is done, for example, by taking the absolute value of the terms in the second line in \eqref{gapest:1} and continuing as is resulting to the estimate for $|J_T(\varphi^s) - J_s(\varphi^s)|$.
        
	\end{remark}
    \jshsr{
    \begin{remark}
      In certain optimal control frameworks (cf. \cite{PSJMFM2021,jork2025}), monitoring the state at the terminal time is of particular interest. Accordingly, if instead of considering the time-average tracking term, we replace it with final time tracking of the form $\int_\Omega \frac{\varphi+1}{2} |\bu(T)-\bu_d|^2 \chi_\omega \du x$, following the computations above--specifically \eqref{wL2}, we get the bound 
      \begin{align*}
        |J_T(\varphi^T) - J_s(\varphi^s)| \leq e^{-\frac{\varsigma T}{2}}(C_1+C_2\sqrt{T}) + \left( \frac{C_3}{T} + \frac{C_4}{\sqrt{T}} \right), 
      \end{align*}
      where the first term, involving the exponential originates from the final time tracking, and the second term is from the penalty for the porous medium approximation.
    \end{remark}
    }
	\begin{remark}
		The result of Theorem~\ref{thm:asymp} 
		justifies to approximate a solution to the time-dependent problem \ref{reduced:objphasetime} by a solution to the stationary problem \ref{reduced:objphasestat}, which in practice should be significantly cheaper to calculate. 
		On the other hand, it  gives rise to the question of whether a sequence of solutions to the time-dependent problem for $T\to \infty$ converges to a solution of the stationary problem.
	\end{remark}
	If we consider a sequence $(T_n)\subset \mathbb{R}_{+}$ such that $T_n\to \infty$ as $n\to \infty$, we take a sequence $(\varphi^{T_n}) \subset \Phi_{ad}$ of minimizers of $J_{T_n}$. From \eqref{ineq:t-s}, we see that
	\begin{align*}
		\frac{\gamma \varepsilon}{4c_0}\|\nabla\varphi^{T_n}\|_{\bL^2}^2 \le J_{T_n}(\varphi^{T_n}) \le C\left( \frac{1}{T_n} + \frac{1}{\sqrt{T_n}} \right) + J_s(\varphi^s) \le C +J_s(\varphi^s).
	\end{align*}
	We  thus find $\varphi^\infty \in\Phi_{ad}$ and extract a subsequence of $(\varphi^{T_n})$, which we denote in the same manner, such that the following convergences hold
	\begin{align}
		&\varphi^{T_n} \rightharpoonup \varphi^{\infty} \text{ in }H^1(\Omega),\label{h1conv}\\
		&\varphi^{T_n} \to \varphi^{\infty} \text{ in }L^p(\Omega) \text{ for } 2\le p < \infty \text{ and a.e. in } \Omega,\label{lpconv}\\
		&\varphi^{T_n} \ws \varphi^{\infty} \text{ in }L^\infty(\Omega).\label{linfconv}
	\end{align}
	The natural question that follows is: What role does the element $\varphi^\infty \in \Phi_{ad}$ play? Theorem~\ref{thm:phiInftyIsMin} establishes that $\varphi^\infty$ is, in fact, a minimizer of $J_s$.

	\begin{theorem}
		\label{thm:phiInftyIsMin}
		The function $\varphi^\infty\in\Phi_{ad}$ is a global minimizer of $J_s$. 
	\end{theorem}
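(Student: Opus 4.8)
The plan is to show that $\varphi^\infty$ attains the minimal value of $J_s$, i.e. $J_s(\varphi^\infty)=J_s(\varphi^s)$, where $\varphi^s\in\Phi_{ad}$ is the global minimizer of \eqref{reduced:objphasestat} furnished by Theorem~\ref{thm:minexist}. Since $\varphi^\infty\in\Phi_{ad}$ and $\varphi^s$ is a global minimizer, $J_s(\varphi^\infty)\ge J_s(\varphi^s)$ is automatic, so only the reverse inequality needs proof. I would obtain it from two ingredients: the weak lower semicontinuity of $J_s$ along the convergence \eqref{h1conv}--\eqref{linfconv}, and the comparison estimates between $J_{T_n}$ and $J_s$ already available from Section~\ref{sec:LongTimeBehavior}.

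First, the mode of convergence $\varphi^{T_n}\wPhi\varphi^\infty$ is precisely the one in Theorem~\ref{theorem:statcontinuity}, so the two state-dependent terms of $J_s$ evaluated at $\varphi^{T_n}$ converge to the corresponding terms at $\varphi^\infty$. Combining this with the weak lower semicontinuity in $H^1(\Omega)$ of the Dirichlet part of $\mathbb{E}_\varepsilon$ — the potential part even converges, because $\Psi_0(\varphi^{T_n})=(1-(\varphi^{T_n})^2)/2$ converges in $L^1(\Omega)$ by \eqref{lpconv} while $I_{[-1,1]}$ vanishes on $\Phi_{ad}$ — gives
\[
  J_s(\varphi^\infty)\le \liminf_{n\to\infty} J_s(\varphi^{T_n}).
\]
Next I would chain three inequalities: by Remark~\ref{rm:JTs_m_JsT-bounded}, $J_s(\varphi^{T_n})\le J_{T_n}(\varphi^{T_n})+\mathcal F(T_n)$; since $\varphi^{T_n}$ minimizes $J_{T_n}$ and $\varphi^s\in\Phi_{ad}$, $J_{T_n}(\varphi^{T_n})\le J_{T_n}(\varphi^s)$; and by the first comparison in the proof of Theorem~\ref{thm:asymp} (also recorded in Remark~\ref{rm:JTs_m_JsT-bounded}), $J_{T_n}(\varphi^s)\le J_s(\varphi^s)+\mathcal F(T_n)$. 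Concatenating yields $J_s(\varphi^{T_n})\le J_s(\varphi^s)+2\mathcal F(T_n)$, and since $\mathcal F(T_n)\to0$, passing to the $\liminf$ and using the displayed inequality gives $J_s(\varphi^\infty)\le J_s(\varphi^s)$, which is the claim.

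The step requiring genuine care is that the constant $c$ in $\mathcal F$ from Theorem~\ref{thm:asymp}, hence in Remark~\ref{rm:JTs_m_JsT-bounded}, is independent not only of $T$ but also of the design function at which the objectives are compared; here I would note that in that proof $\varphi$ enters only through the uniformly bounded quantities $\|\varphi+1\|_{L^\infty}$, $\|\alpha_\varepsilon(\varphi)\|_{L^\infty}$, $\|\alphat_\varepsilon(\varphi)\|_{L^\infty}$ (bounded via \eqref{linfalpha}), $\|\ST(\varphi)\|_{L^2(\bV)}$, $\|\Ss(\varphi)\|_{\bV}$ and $\|\bu_0-\Ss(\varphi)\|_{\bH}$ (bounded via \eqref{energy:time}, \eqref{energy:stat}), so the estimate applies verbatim with $\varphi^{T_n}$ in place of $\varphi^s$. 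As a complement — not needed for global optimality, which is genuinely a consequence of the value convergence above since the problems are non-convex — one can additionally pass to the limit in the time-dependent variational inequality \eqref{variationalineq:time} for $\varphi^{T_n}$: the exponential decay $\|\ST(\varphi^{T_n})(t)-\Ss(\varphi^{T_n})\|_{\bH}^2\le e^{-\gamma t}(\|\bu_0-\Ss(\varphi^{T_n})\|_{\bH}^2+\tfrac{2c_P^2}{\mu}K)$ from Proposition~\ref{prop:decaylin} forces the time-averaged integrals in \eqref{variationalineq:time} to converge to their stationary counterparts, so that \eqref{variationalineq:stat} is recovered in the limit, confirming the consistency of the two descriptions of $\varphi^\infty$.
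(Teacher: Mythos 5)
Your proposal is correct, but it follows a genuinely different route from the paper. The paper proves $J_s(\varphi^s)=J_s(\varphi^\infty)$ by estimating $|J_s(\varphi^s)-J_{T_n}(\varphi^{T_n})|+|J_{T_n}(\varphi^{T_n})-J_s(\varphi^{T_n})|+|J_s(\varphi^{T_n})-J_s(\varphi^\infty)|$ and showing that the \emph{full} difference $|J_s(\varphi^{T_n})-J_s(\varphi^\infty)|$ vanishes; the delicate point there is the Ginzburg--Landau term, for which weak $H^1$ convergence alone does not give convergence of $\|\nabla\varphi^{T_n}\|_{L^2}^2$, so the authors invoke the first-order optimality condition \eqref{variationalineq:time} (tested with $\varphi=\varphi^\infty$ and $\varphi=2\varphi^{T_n}-\varphi^\infty$) together with the energy estimates for the state and adjoint to control $\int_\Omega\nabla\varphi^{T_n}\colon(\nabla\varphi^\infty-\nabla\varphi^{T_n})\dx$. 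You avoid this entirely: since you only need $J_s(\varphi^\infty)\le J_s(\varphi^s)$, weak lower semicontinuity of $\mathbb{E}_\varepsilon$ (exactly as in Theorem~\ref{thm:minexist}) combined with the continuity results of Theorem~\ref{theorem:statcontinuity} gives $J_s(\varphi^\infty)\le\liminf_n J_s(\varphi^{T_n})$, and the sandwich $J_s(\varphi^{T_n})\le J_{T_n}(\varphi^{T_n})+\mathcal F(T_n)\le J_{T_n}(\varphi^s)+\mathcal F(T_n)\le J_s(\varphi^s)+2\mathcal F(T_n)$ — legitimately justified by Remark~\ref{rm:JTs_m_JsT-bounded}, whose constant is uniform in the design for exactly the reasons you list — closes the argument; the reverse inequality is trivial by minimality of $\varphi^s$. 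What each approach buys: your argument is more elementary and bypasses the adjoint state and the optimality system altogether, and once minimality of $\varphi^\infty$ is established, the convergences $J_s(\varphi^{T_n})\to J_s(\varphi^\infty)$ and $\mathbb{E}_\varepsilon(\varphi^{T_n})\to\mathbb{E}_\varepsilon(\varphi^\infty)$ (hence strong $H^1$ convergence of the subsequence) follow a posteriori, since the lower semicontinuous Ginzburg--Landau term cannot jump when the sum of all terms converges to the limit value; the paper's proof obtains this energy convergence directly, at the price of the heavier machinery. Your closing remark about passing to the limit in \eqref{variationalineq:time} via the decay estimate of Proposition~\ref{prop:decaylin} is a reasonable consistency check but, as you say, is not needed for the claim.
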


	\begin{proof}
		Since we have established the existence of a minimizer $\varphi^s\in\Phi_{ad}$ for $J_s$, we show that $\varphi^\infty$ is also a minimizer by proving that $J_s(\varphi^s) = J_s(\varphi^\infty)$. 
		
		With the aid of Theorem~\ref{thm:asymp} and Remark~\ref{rm:JTs_m_JsT-bounded}, we find that
		for any $T_n$ with corresponding global minimizer $\varphi^{T_n}$ of $J_{T_n}$ it holds
		\begin{align*}
			|J_s(\varphi^s) - J_s(\varphi^\infty) | & \le |J_s(\varphi^s) - J_{T_n}(\varphi^{T_n}) | + |J_{T_n}(\varphi^{T_n}) - J_s(\varphi^{T_n}) | + |J_s(\varphi^{T_n}) - J_s(\varphi^\infty) | \\
			& \le C\left(\frac{1}{T_n} + \frac{1}{\sqrt{T_n}}\right) + |J_s(\varphi^{T_n}) - J_s(\varphi^\infty) |.
		\end{align*}
		
		The next step is to get proper estimates for the last term. 
		Let $\bv^{T_n} = \Ss(\varphi^{T_n})$ and $\bv^\infty = \Ss(\varphi^\infty)$, we have
		\begin{align*}
			& |J_s(\varphi^{T_n}) - J_s(\varphi^\infty) | = \left| \int_\Omega \left( \frac{\varphi^{T_n} + 1}{2} \right) |\bv^{T_n}-\bu_d|^2\one_\omega \du x  + \int_\Omega \alphat_\varepsilon(\varphi^{T_n})|\bv^{T_n}|^2\du x + \gamma \mathbb{E}_{\varepsilon}(\varphi^{T_n}) \right.\\
			&\hspace{1.35in} - \left. \left[\int_\Omega \left( \frac{\varphi^\infty + 1}{2} \right) |\bv^\infty-\bu_d|^2\one_\omega \du x  + \int_\Omega \alphat_\varepsilon(\varphi^\infty)|\bv^\infty|^2\du x + \gamma \mathbb{E}_{\varepsilon}(\varphi^\infty) \right] \right|\\
			&\le  \left| \int_\Omega \left( \frac{\varphi^{T_n} + 1}{2} \right)( \bv^{T_n} - \bv^\infty )\cdot( \bv^{T_n} + \bv^\infty - 2\bu_d)\one_\omega \du x\right|  + \left|\int_\Omega \left( \frac{\varphi^{T_n} - \varphi^\infty}{2} \right) |\bv^\infty-\bu_d|^2\one_\omega \du x \right|\\
			&\hspace{.15in} + \left| \int_\Omega \alphat_\varepsilon(\varphi^{T_n})( \bv^{T_n} - \bv^\infty )\cdot( \bv^{T_n} + \bv^\infty)\du x\right|  + \left|\int_\Omega \left( \alphat_\varepsilon(\varphi^{T_n}) - \alphat_\varepsilon(\varphi^\infty)\right) |\bv^\infty|^2\du x \right|\\
			&\hspace{.15in}+ \gamma|\mathbb{E}_{\varepsilon}(\varphi^{T_n}) - \mathbb{E}_{\varepsilon}(\varphi^\infty) | \\
			&=: I_1 + I_2 + I_3 + I_4 +I_5.
		\end{align*}
		
		\paragraph*{Treatment of $I_1$--$I_4$.}
		For $I_1$, we note that from Theorem~\ref{theorem:statcontinuity}  we have $\bv^{T_n}\to \bv^\infty$ in $\bL^2(\Omega)$. From this, and from the inequalities \eqref{bound:Phi} and \eqref{energy:stat} we have
		\begin{align*}
			I_1 &\le \| \bv^{T_n} - \bv^{\infty} \|_{\bL^2}\left( c_P(\| \bv^{T_n}\|_{\bV} + \| \bv^\infty\|_{\bV})+ 2\|\bu_d\|_{\bL^2(\omega)}\right)\\
			& \le \| \bv^{T_n} - \bv^{\infty} \|_{\bL^2}\left(\frac{2c_P^2}{\mu}\|\bf_s \|_{\bL^2}+ 2\|\bu_d\|_{\bL^2(\omega)}\right) \to 0 \text{ as }n\to \infty.
		\end{align*}
		Since $\bv^\infty\in \bL^2(\Omega)$ and $\bu_d\in \bL^2(\omega)$, we have $|\bv^\infty - \bu_d|^2\one_\omega \in L^1(\Omega)$. The convergence $\varphi^{T_n} \ws \varphi^{\infty}$ in $L^\infty(\Omega)$, therefore implies that $I_2$ goes to zero as $n\to \infty$. Using same arguments as for $I_1$ and $I_2$, the terms $I_3$ and $I_4$ converge to zero, respectively.
		
		\paragraph*{Treatment of $I_5$.}
		The treatment of the term $I_5$ is more nuanced. In fact, we will need to rely on the first order optimality condition for the minimizer of $J_{T_n}$. For now, the definition of the Ginzburg--Landau energy functional gives us
		\begin{align*}
			|\mathbb{E}_{\varepsilon}(\varphi^{T_n}) - \mathbb{E}_{\varepsilon}(\varphi^\infty) | & \le  \frac{\varepsilon}{4c_0}\left| \int_\Omega |\nabla\varphi^{T_n}|^2 - |\nabla\varphi^\infty|^2 \du x\right| +  \frac{1}{2\varepsilon c_0}\left| \int_\Omega \Psi_0(\varphi^{T_n}) - \Psi_0(\varphi^\infty) \du x  \right|\\
			& \le  \frac{\varepsilon}{4c_0}\left| \int_\Omega |\nabla\varphi^{T_n}|^2 - |\nabla\varphi^\infty|^2 \du x\right| +  \frac{1}{4\varepsilon c_0} \|\varphi^{T_n} - \varphi^\infty\|_{L^2} \|\varphi^{T_n}+\varphi^\infty\|_{L^2} =: J_1 + J_2
		\end{align*}
		Due to \eqref{lpconv}, we see that the $J_2$ goes to zero. For $J_1$, we use \eqref{h1conv}, so that
		\begin{align*}
			J_1 \le  \frac{\varepsilon}{4c_0}\left| \int_\Omega (\nabla\varphi^{T_n} - \nabla\varphi^\infty)\cdot\nabla\varphi^{T_n}\du x\right| +  \frac{\varepsilon}{4c_0}\underbrace{\left| \int_\Omega (\nabla\varphi^{T_n} - \nabla\varphi^\infty)\cdot\nabla\varphi^\infty \du x\right|}_{\to 0\text{ as }n\to \infty}.
		\end{align*}
		Here the second term tends to zero by the weak convergence $\varphi^{T_n} \rightharpoonup \varphi^{\infty}$ in $H^1(\Omega)$.
		
		Since $\varphi^{T_n}$ is a minimizer for $J_{T_n}$, it satisfies the necessary optimality condition \eqref{variationalineq:time}. Taking $\varphi = \varphi^\infty$, we see that
		\begin{align*}
			\frac{\gamma\varepsilon}{4c_0}\int_\Omega \nabla\varphi^{T_n}&\cdot  (\nabla\varphi^{T_n} - \nabla\varphi^\infty)\du x\\
			\le& \frac{1}{T_n}\left[\int_{\Omega_{T_n}} \left( \frac{\varphi^\infty - \varphi^{T_n} }{2} \right) |\bu^{T_n}-\bu_d|^2 \one_\omega + {\mathcal{B}}_\varepsilon(\varphi^{T_n})(\varphi^\infty - \varphi^{T_n})|\bu^{T_n}|^2\du x\du t\right.\\
			& \left.- \int_{\Omega_{T_n}}  {\mathcal{A}}_\varepsilon(\varphi^{T_n})(\varphi^\infty - \varphi^{T_n})\bu^{T_n}\cdot\bua^{T_n} \du x\du t\right] + \frac{\gamma}{2\varepsilon c_0}\int_\Omega \Psi_0'(\varphi^{T_n})(\varphi^\infty - \varphi^{T_n}) \du x\\
			\le & \frac{1}{T_n}\left[\left|\int_{\Omega_{T_n}}  \left( \frac{\varphi^\infty - \varphi^{T_n} }{2} \right) |\bu^{T_n}-\bu_d|^2 \one_\omega\du x\du t \right| + \left|\int_{\Omega_{T_n}}  {\mathcal{B}}_\varepsilon(\varphi^{T_n})(\varphi^\infty - \varphi^{T_n})|\bu^{T_n}|^2\du x\du t \right| \right.\\
			& \left.+ \left|\int_{\Omega_{T_n}}  {\mathcal{A}}_\varepsilon(\varphi^{T_n})(\varphi^\infty - \varphi^{T_n})\bu^{T_n}\cdot\bua^{T_n} \du x\du t \right| \right] + \frac{\gamma}{2\varepsilon c_0}\left| \int_\Omega \Psi_0'(\varphi^{T_n})(\varphi^\infty - \varphi^{T_n}) \du x\right|\\
			=:& \frac{1}{T_n}(J_{1,1} + J_{1,2} + J_{1,3}) 
			+ \frac{\gamma}{2\varepsilon c_0}J_{1,4},
		\end{align*}
		where $\bu^{T_n} = \ST(\varphi^{T_n})$ and $(\bua^{T_n},0) = \left[ \frac{\partial}{\partial\bu}\mathcal{G}(\bu^{T_n},\varphi^{T_n}) \right]^{-*}\left( ({\varphi^{T_n} + 1}) \left(\bu^{T_n} - \bu_d\right)\one_\omega + 2\alphat_\varepsilon(\varphi^{T_n})\bu^{T_n}\right)$.
		We use H{\"o}lder inequality, the embedding $H^1\hookrightarrow L^4$, \eqref{energy:time}, \eqref{connect:ffs} to get the following estimate for $J_{1,1}$ using a generic constant $C_{1,1}$.
		\begin{align*}
			\frac{1}{2}J_{1,1} & \le \left|\int_{\Omega_{T_n}}  \left( \frac{\varphi^\infty - \varphi^{T_n} }{2} \right) |\bu^{T_n}|^2 \du x\du t \right| + \left|\int_{\Omega_{T_n}}  \left( \frac{\varphi^\infty - \varphi^{T_n} }{2} \right) |\bu_d|^2 \one_\omega\du x\du t \right|\\
			& \le C_{1,1}\|\varphi^\infty - \varphi^{T_n} \|_{L^2}\left|\int_0^{T_n} \|\bu^{T_n}\|_{\bL^4}^2 \du t \right| + T_n \left|\int_\Omega \left( \frac{\varphi^\infty - \varphi^{T_n} }{2} \right) |\bu_d|^2 \one_\omega\du x\right|\\
			& \le C_{1,1}\|\varphi^\infty - \varphi^{T_n} \|_{L^2}\|\bu^{T_n}\|_{L^2(\bV)}^2 + T_n \left|\int_\Omega \left( \frac{\varphi^\infty - \varphi^{T_n} }{2} \right) |\bu_d|^2 \one_\omega\du x\right|\\
			& \le C_{1,1}\|\varphi^\infty - \varphi^{T_n} \|_{L^2}(\|\bf\|_{L^2(\bL^2)}^2 + \|\bu_0\|_{\bH}^2) + T_n \left|\int_\Omega \left( \frac{\varphi^\infty - \varphi^{T_n} }{2} \right) |\bu_d|^2 \one_\omega\du x\right|\\
			& \le C_{1,1}\|\varphi^\infty - \varphi^{T_n} \|_{L^2}(K + T_n\|\bf_s\|_{\bL^2}^2 + \|\bu_0\|_{\bH}^2) + T_n \left|\int_\Omega \left( \frac{\varphi^\infty - \varphi^{T_n} }{2} \right) |\bu_d|^2 \one_\omega\du x\right|.
		\end{align*}
		\jshsb {After dividing  by $T_n$, we see that the first term tends to zero as $n\to\infty$, by the strong convergence of $\varphi^{T_n} \to \varphi^\infty$ in $L^2(\Omega)$.} 
		For the second term we use $|\bu_d|^2\one_\omega\in L^1(\Omega)$ and the convergence \eqref{linfconv} to show that in summary $\frac{1}{T_n}J_{1,1}\to 0$ as $n\to \infty$. 
		Similar arguments  show that $\frac{1}{T_n}J_{1,2}\to 0$ as $n\to \infty$. 
		
		To majorize $J_{1,3}$, we use H{\"o}lder inequality, the embedding $H^1\hookrightarrow L^4$, and the estimates  \eqref{energy:time} and \eqref{connect:ffs}, and proceed using a generic constant $C_{1,3}$.
		\begin{align*} 
			J_{1,3} & =  \left|\int_{\Omega_{T_n}}  {\mathcal{A}}_\varepsilon(\varphi^{T_n})(\varphi^\infty - \varphi^{T_n})\bu^{T_n}\cdot\bua^{T_n} \du x\du t \right|\\
			&\le \|\mathcal A_\varepsilon(\varphi^{T_n})(\varphi^\infty - \varphi^{T_n})\|_{L^2}\|\bu^{T_n}\|_{L^2(\bL^4)}\|\bua^{T_n}\|_{L^2(\bL^4)}\\
			&\le c_{\!\mathcal{A}}\|\varphi^\infty - \varphi^{T_n}\|_{L^q}\|\bu^{T_n}\|_{L^2(\bL^4)}\|\bua^{T_n}\|_{L^2(\bL^4)}\\
			& \le C_{1,3}\|\varphi^\infty - \varphi^{T_n}\|_{L^q}\|\bu^{T_n}\|_{L^2(\bV)}\|\bua^{T_n}\|_{L^2(\bV)}\\
			& \le C_{1,3}\|\varphi^\infty - \varphi^{T_n}\|_{L^q}( \|\bf\|_{L^2(\bL^2)} + \|\bu_0\|_{\bH} )\left\| \left( \frac{\varphi^{T_n} + 1}{2} \right) (\bu^{T_n}-\bu_d)\one_{\omega} + \alpha_\varepsilon(\varphi^{T_n})\bu^{T_n} \right\|_{L^2(\bL^2)} \\
			& \le C_{1,3}\|\varphi^\infty - \varphi^{T_n}\|_{L^q}( K + \sqrt{T_n}\|\bf_s\|_{\bL^2} + \|\bu_0\|_{\bH} ) 
			\Bigg[ \left( \frac{\|\varphi^{T_n}\|_{L^\infty} + 1}{2} \right) 
			\left(
			\|\bu^{T_n}\|_{L^2(\bL^2)}+ \sqrt{T_n}\|\bu_d\|_{\bL^2(\omega)} 
			\right)
			\\
			& \quad + \|\alpha_\varepsilon(\varphi^{T_n})\|_{L^\infty}\|\bu^{T_n} \|_{L^2(\bL^2)} \Bigg],
		\end{align*}
		where $q\ge 2$ is such that $1/2 = 1/q + 1/p$ and $p\ge 4$ is as in Assumption~\ref{porousassum}\ref{en:assump-alpha-ii}. Using the bounds \eqref{linfalpha}, 
		\eqref{bound:Phi} and 
		\eqref{energy:stat}, we arrive at the estimate
		\begin{align*}
			J_{1,3} \le C_{1,3}\|\varphi^\infty - \varphi^{T_n}\|_{L^q}(1 + \sqrt{T_n} + T_n )
		\end{align*}
		where $C_{1,3}>0$ is a constant not dependent on $T_n$. We thus have $\frac{1}{T_n}J_{1,3} \to 0$ as $n\to \infty$.
		Lastly,  we have  $J_{1,4}\to0$ as $n\to \infty$ by virtue of \eqref{lpconv}.

		Similarly, we can again use the first order optimality $J'_{T_n}(\varphi^{T_n})(\varphi^{T_n} - \varphi) \le 0$ with $\varphi = 2\varphi^{T_n} - \varphi^\infty\in \Phi_{ad}$ to majorize $\displaystyle \int_\Omega \nabla\varphi^{T_n}\cdot(\nabla\varphi^\infty - \nabla\varphi^{T_n}) \du x$ that goes to zero as $n\to \infty$. 
		Together,  
		$\left| \int_\Omega \nabla\varphi^{T_n}:(\nabla\varphi^\infty - \nabla\varphi^{T_n}) \du x \right| \to 0$ for $n\to \infty$ and  
		we  showed that $|\mathbb{E}_{\varepsilon}(\varphi^{T_n}) - \mathbb{E}_{\varepsilon}(\varphi^\infty) | \to 0$ as $n\to \infty$.

		In summary, we  conclude that 
		\begin{align*}
			|J_s(\varphi^s) - J_s(\varphi^{\infty})| \le  \lim_{n\to\infty} C\left(\frac{1}{T_n} + \frac{1}{\sqrt{T_n}}\right) + |J_s(\varphi^s) - J_{T_n}(\varphi^{T_n}) | =  0
		\end{align*}
		which completes the proof.    
	\end{proof}

	
	\section{Numerical implementation and examples}
	\label{sec:numerics}
	This section is dedicated to numerically illustrate the convergence of solutions proved in Section~\ref{sec:LongTimeBehavior}.
	The equations involved in the optimization problems are solved using finite element method with the aid of the free software \texttt{FreeFem++} \cite{hecht2012}. 
	
	\jshsr{To approximate the phase-field, $\mathbb{P}^1$ finite elements were used.} The state and adjoint equations are discretized using $\mathbb{P}^1$-bubble finite elements for the velocity component, while we use $\mathbb{P}^1$ elements for the pressure, i.e. we use the MINI-element.
	To solve the time-dependent Navier--Stokes equations we utilize an Implicit/Explicit method: we uniformly partition the interval $[0,T]$ into $N_T$ subintervals $(t^{n-1},t^{n})$ and recursively solve the approximations $(\bu^n,p^n) \approx (\bu,p)(t^n)$ using the
    \ck{linear}
    difference equation
	\begin{align*}
		\left\{
		\begin{aligned}
			\frac{1}{\Delta t}(\bu^n - \bu^{n-1}) + \alpha_\varepsilon(\varphi)\bu^n - \mu \Delta\bu^{n} + (\bu^{n-1}\cdot\nabla)\bu^n + \nabla p^n & = \bf^n &&\text{ in }\Omega,\\
			\dive\bu^n & = 0 &&\text{in }\Omega,\\
			\bu^n & = \bg^n &&\text{on }\partial\Omega,      
		\end{aligned}
		\right.
	\end{align*}
	with $\bu^0 = \bu_0$ and $\Delta t = t^{n} - t^{n-1}$. This method is a stable time-discretization method which in fact satisfies
	\begin{align*}
		\frac{1}{\Delta t}\|\bu^n\|_{\bH} + \frac{\mu}{2}\|\bu^n\|_{\bV} \le
		c\left( \|\bu_0\|_{\bH} + \|\bf\|_{L^2(\bL^2)} + \|\bg\|_{L^2(\bL^2(\partial\Omega))} \right)
	\end{align*}
	where $c>0$ continuously depends only on $\frac{1}{\mu\Delta t}$, $c_P$, and the continuity constant of the trace operator $\bH^1(\Omega)\hookrightarrow \bH^{1/2}(\partial\Omega)$. 
	An appropriate adjoint equation is also formulated that corresponds to the optimization problem and the semi-discretization above.
	
	The stationary problem, on the other hand, is solved using Newton's method for the nonlinear Navier--Stokes equations. The routine is terminated when the $\bV$-norm of the difference between two consecutive approximations reaches a given tolerance $\texttt{tol}>0$.
	
	
	To solve the discretized versions of the optimization problems \eqref{reduced:objphasestat} and \eqref{reduced:objphasetime}, we use the \enquote{variable metric projection-type} (VMPT) method in order to take account of the fact that the Fr{\'e}chet derivative of $J_T$ and $J_s$ is defined on $\mathbb{X}:=H^1(\Omega)\cap L^\infty(\Omega)$.
	
	\jshsr{In each optimization iteration, the VMPT method calculates suitable descent directions by solving a projection type subproblem. In this way a descent direction is found, that would correspond to the gradient in case of a Hilbert space setting.
	In our case these subproblems are linear-quadratic with bound constraints and thus are solved by
	a primal-dual active set (PDAS) method. We use the VMPT method from \cite{blank2017}, while the PDAS method is inspired by  \cite{ito2004}. We use the VMPT method for both, the time-dependent problem \eqref{reduced:objphasetime} and the stationary problem \eqref{reduced:objphasestat}. It is terminated when 
	the $H^1$-norm of the difference between two consecutive iterates reaches a given tolerance $\texttt{tol} > 0$.}

    \jshsr{
	For both, Newton's method and VMPT, we use $\texttt{tol} = 1 \times 10^{-6}$. All linear systems of equations resulting from finite element discretisations are solved with the sparse linear solver UMFPACK \cite{Davis.2004}.}

	\subsection{Set-up of the numerical example}
	
	To illustrate the convergence proven in the previous section, 
	we consider the problem of tracking the velocity field inside a channel that contains an obstacle.
	
	Let $\Omega = (0,3)\times (0,1)$ which we discretize with a  uniform structured triangular mesh from a $600\times 200$ grid. 
	To have a physically relevant example, instead of considering a homogeneous Dirichlet condition, we consider Dirichlet data $\bg$ and $\bg_s$ that have the behavior of inflow and outflow on the left and right borders of the domain, respectively denoted as $\Gamma_{i}$ and $\Gamma_{o}$ (cf. Figure~\ref{fig:num:sketchSetup}). In this case, we take zero values for the external forces $\bf_s$ and $\bf$. 
	We mention that even though the set-up of the flow in this numerical example differs from that in the analysis before, we can expect a similar convergence by lifting the Navier--Stokes equations with the extension of the Dirichlet data by virtue of the surjectivity of the trace function.
	
	\begin{figure}
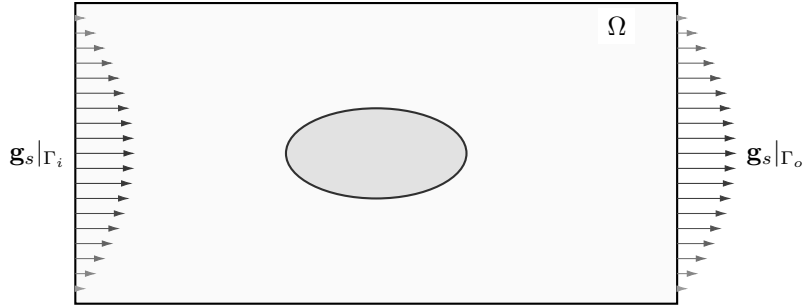

		\centering
		\includestandalone[mode=image|tex]{figures/standalone/inout_flow}
		\caption{Geometric setup of the numerical example. We consider a flow along a channel with Poiseuille-type Dirichlet data $\bg_s$ on $\Gamma_{i}$ and $\Gamma_{o}$ and an enclosed obstacle.}
		\label{fig:num:sketchSetup}
	\end{figure}
	
	As Dirichlet boundary data we use $\bg_s(x_1,x_2) = (0,1-4(x_2-0.5)^2)^\top$ 
	and $\bg(t) = (1-e^{-30t})\bg_s$.
	For the time-dependent setting we use $\bu_0 = 0$.
	The interpolation functions $\alpha_\varepsilon$ and $\alphat_\varepsilon$, are chosen as
	\begin{align*}
		\alpha_\varepsilon(\varphi) =\frac{100  \overline{\alpha}}{2\varepsilon}(1-\varphi)
		\text{ and }
		\alphat_\varepsilon(\varphi) =\frac{\overline{\alpha}}{2\varepsilon}(1-\varphi).
	\end{align*}
	The rest of the parameters are stated in Table~\ref{tab:params}. For the tracking term, we use full observation, i.e. $\omega = \Omega$, for simplicity.
	\begin{table}
		\centering 
		\begin{tabular}{ccccccc}
			\toprule
			$\overline{\alpha}$  & $\mu$ & $\varepsilon$ & $\gamma$ \\
			\midrule
			$0.1$  & $0.5$ &  $0.0075$ & $0.005$\\
			\bottomrule
		\end{tabular}
		
		\caption{Parameter values for the numerical setup.}
		\label{tab:params}
	\end{table}

	\paragraph*{Definition of the target profile $\bu_d$.}
	To define the target profile $\bu_d$, we solve the stationary Navier--Stokes equations \eqref{poreq:statNS} with a given  phase field $\varphi_d$ defined as 
	\begin{align*}
		&\varphi_d(x) = -\Phi_0\left(\frac{1}{\varepsilon}\left( 1 -\sqrt{30(x_1-x_{c,1})^2 + 80(x_2-x_{c,2})^2} \right)  \right),\\ 
		&\text{ where }\Phi_0 = 
		\left\{ \begin{aligned}
			&\sin(z) &&\text{if }|z|\le \frac{\pi}{2},\\
			&\mathrm{sgn}(z) &&\text{otherwise},
		\end{aligned}
		\right.
	\end{align*}
	with $x_c = (1.5,0.5)^\top$.
	Here $\Phi_0$ is an approximation of a phase field obtained from the double-obstacle potential, see e.g.  \cite[Sec.~4.3.3]{Abels2012}.
	Note that $\varphi_d$ is a phase-field whose zero level line is
	the ellipse centered at $x_c$ with half axis of length $\sqrt{30}^{-1}$ and $\sqrt{80}^{-1}$
	that satisfies $$30(x_1-1.5)^2 + 80(x_2-0.5)^2 = 1.$$ 
	To reach an obstacle $\{x\in\Omega: \varphi_d(x) = -1 \}$ that is \textit{almost} impermeable,
	we also use a different interpolation $\alpha_\varepsilon$ of the porous media, given by $\alpha_{\varepsilon}(\varphi_d) = \frac{500\overline{\alpha}}{\varepsilon}(1-\varphi_d)$.
	We show the target profile $\bu_d$ together with the zero level line of $\varphi^d$ in Figure~\ref{fig:numstat:opt}.
	
	\begin{remark}
		We stress that even if we would consider the same permeability for $\bu_d$ and the optimization process, $\varphi_d$ and thus $\bu_d$ is not reachable by solving \eqref{reduced:objphasestat}
		or \eqref{reduced:objphasetime} since $J_s$ and $J_T$ contain regularization terms that have to be taken into account during minimization.
	\end{remark}

	\subsection{Solution to the stationary problem}

	\jshsr{We begin our numerical study by addressing the stationary problem \eqref{reduced:objphasestat}. The optimization is initialized with the phase-field $\varphi^s \equiv 1$, 
    from which the VMPT method terminates after 37  iterations.}

	\begin{figure}
		\centering
		\includegraphics[width=0.9\textwidth]{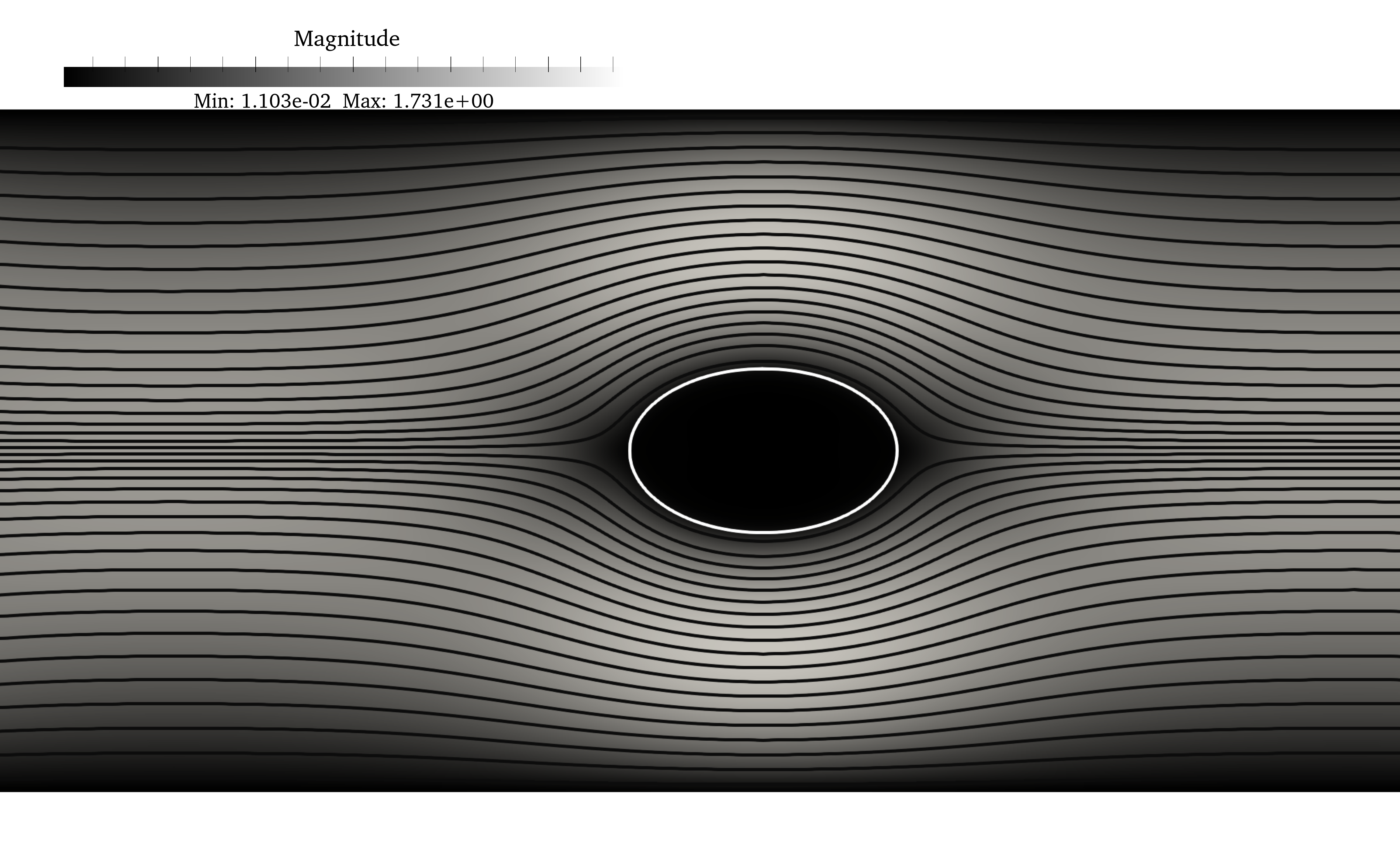}
		\caption{
			This image shows the desired velocity field $\bu_d$ (in magnitude and by streamlines) together with the zero-level set of $\varphi_d$. Despite the porous media approximation of the obstacle, we observe a substantial decrease of velocity inside the region $\{x\in\Omega: \varphi(x) \le 0\}$.
		}
		\label{fig:numstat:opt}
	\end{figure}
	
	In Figure~\ref{fig:numstat:phi-n} we show the approximate solution to \eqref{reduced:objphasestat} after $1,8,15,22,29,37$ iterations, where the result after 37 iterations is the \jshsr{obtained optimized shape}.
	In all cases we restrict to the subdomain $(0.9,2.0)\times (0.225,0.775) \subset \Omega$ that contains the present approximation of the region $\{x\in\Omega: -1\le \varphi^s(x) < 1 \} $.
	We observe that after the first step of the optimization procedure we find a reasonable guess for the location of the optimal topology and that this is subsequently slowly reduced to the optimal shape. We also note that the obtained optimal shape almost coincides with the covertices of the ellipse describing the zero-level set of $\varphi_d$, while a substantial difference is noticed at the vertices of the ellipse. In fact, the difference may be attributed to the fact that the target velocity $\bu_d$ is significantly low around the vertices.
	\begin{figure}
		\centering
		\includegraphics[width=0.3\textwidth]{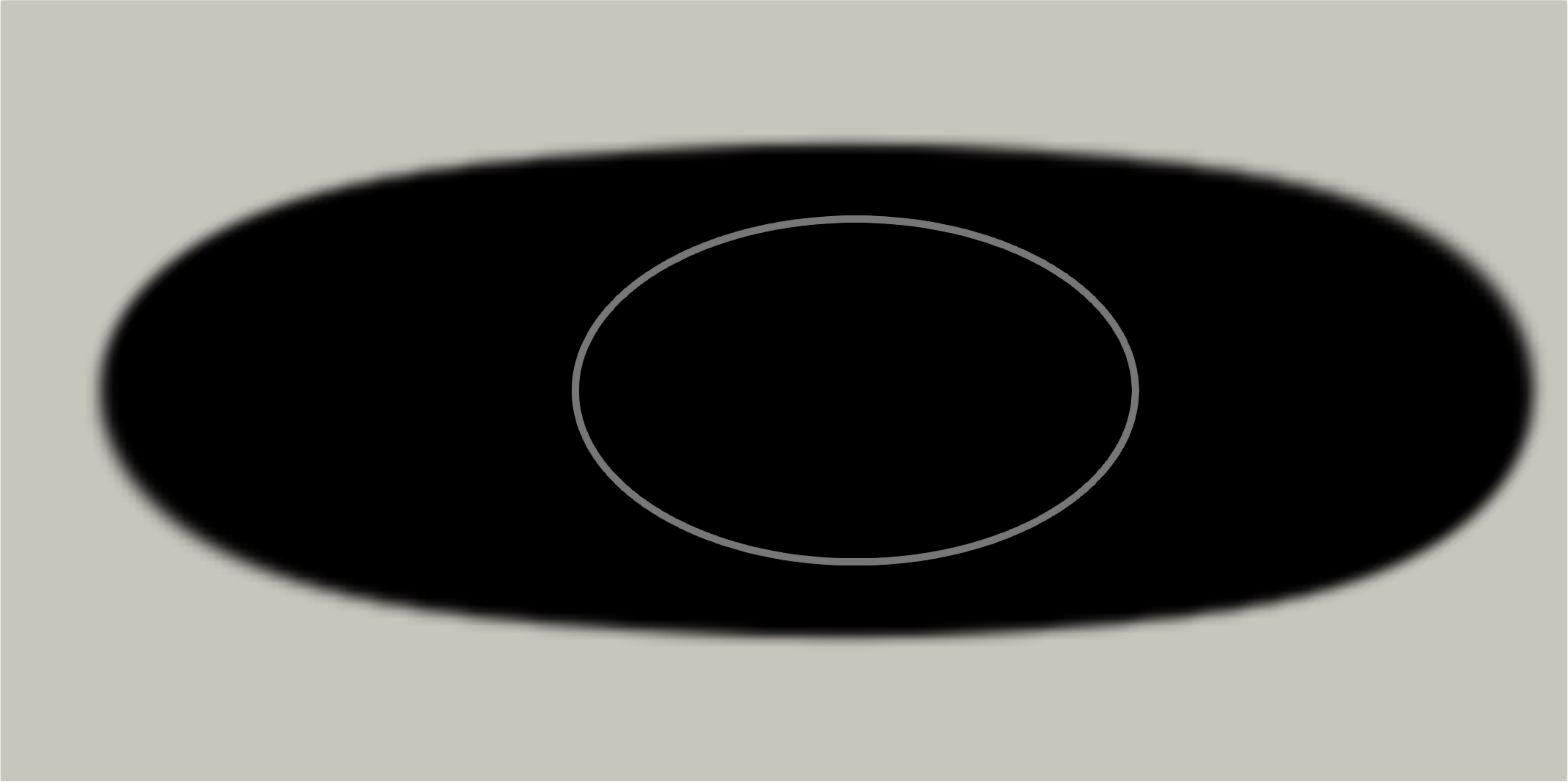}\hspace{1em}%
		\includegraphics[width=0.3\textwidth]{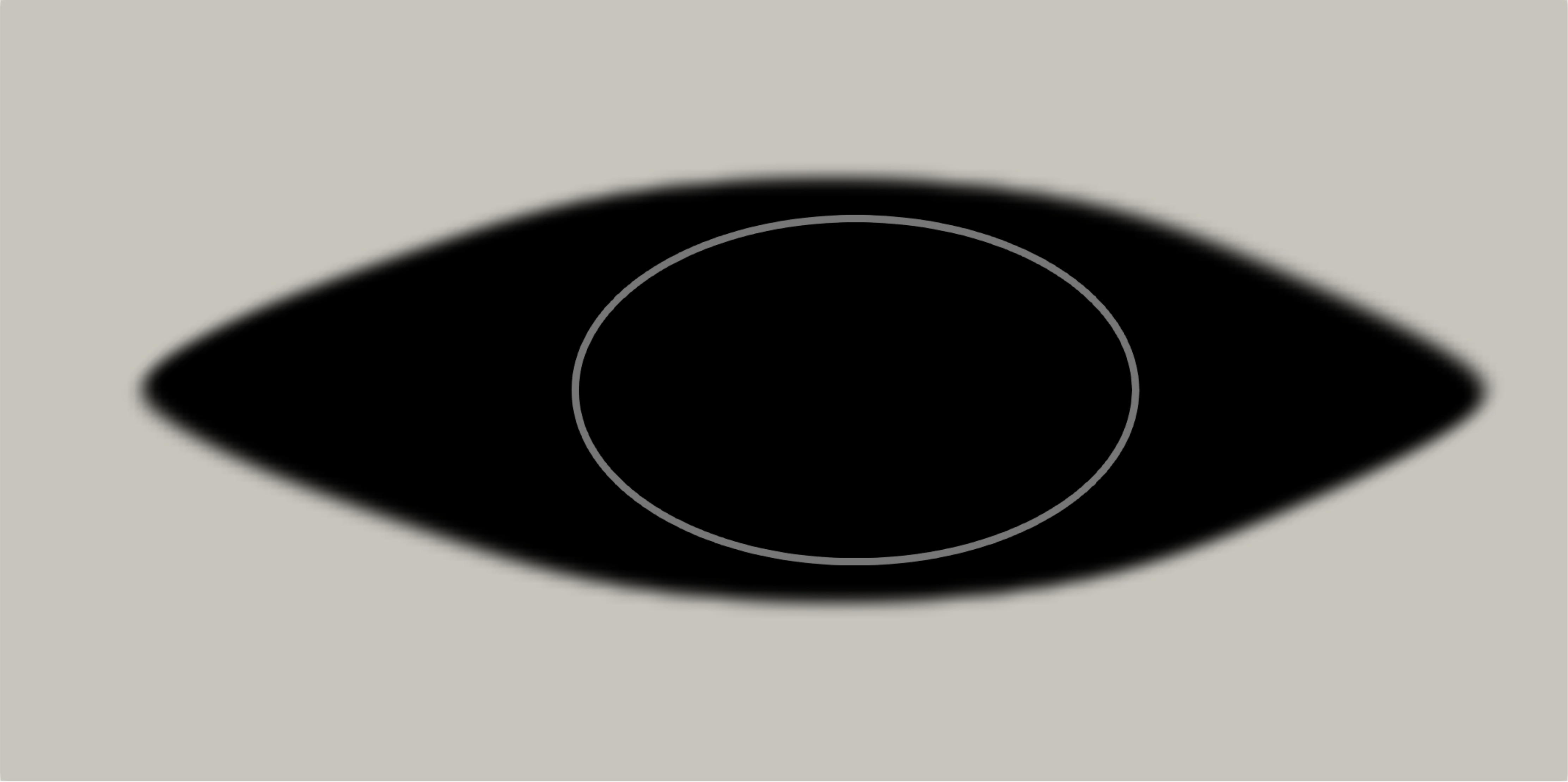}\hspace{1em}%
		\includegraphics[width=0.3\textwidth]{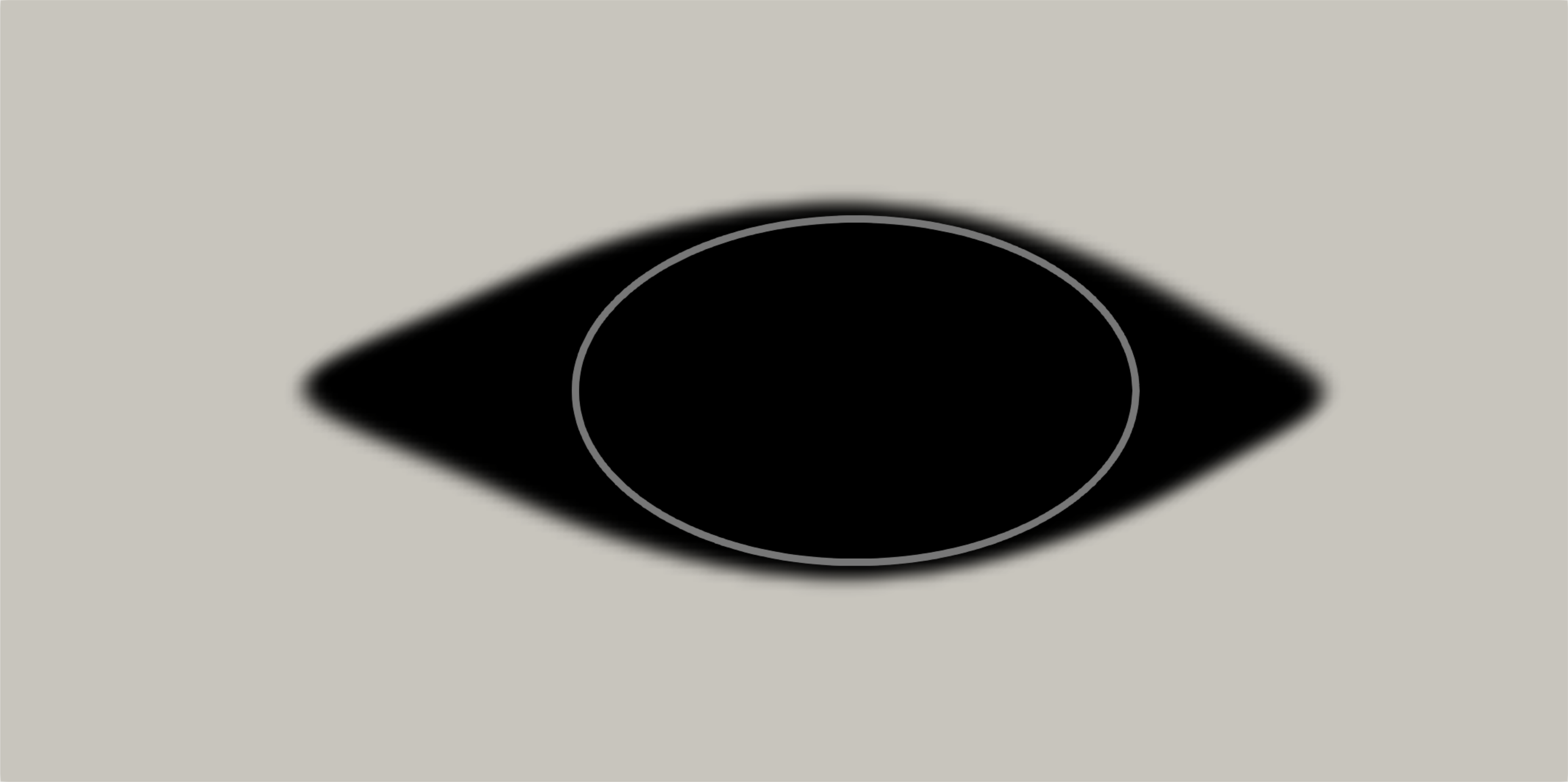}\\[1em]		 \includegraphics[width=0.3\textwidth]{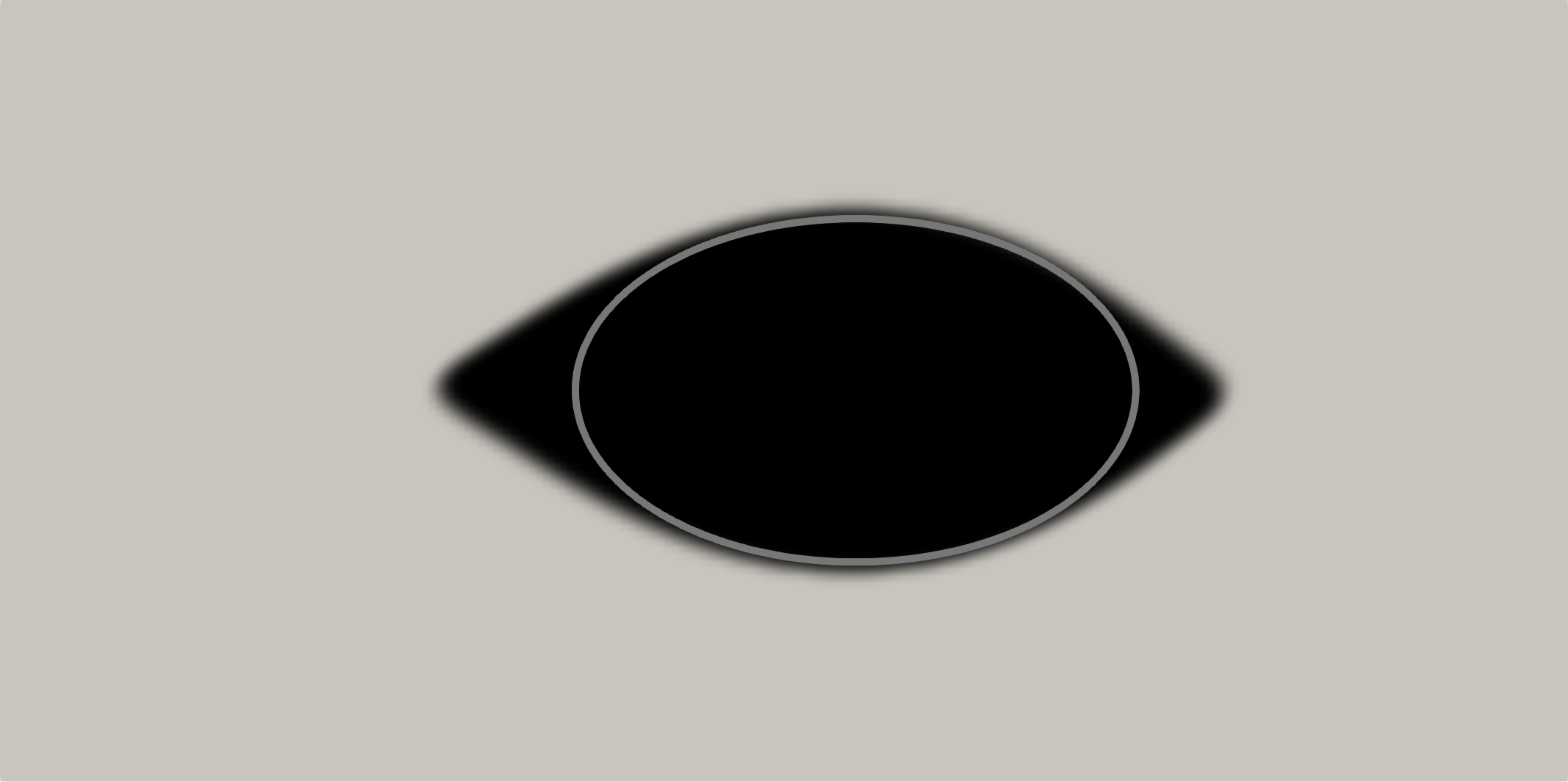}\hspace{1em}%
		\includegraphics[width=0.3\textwidth]{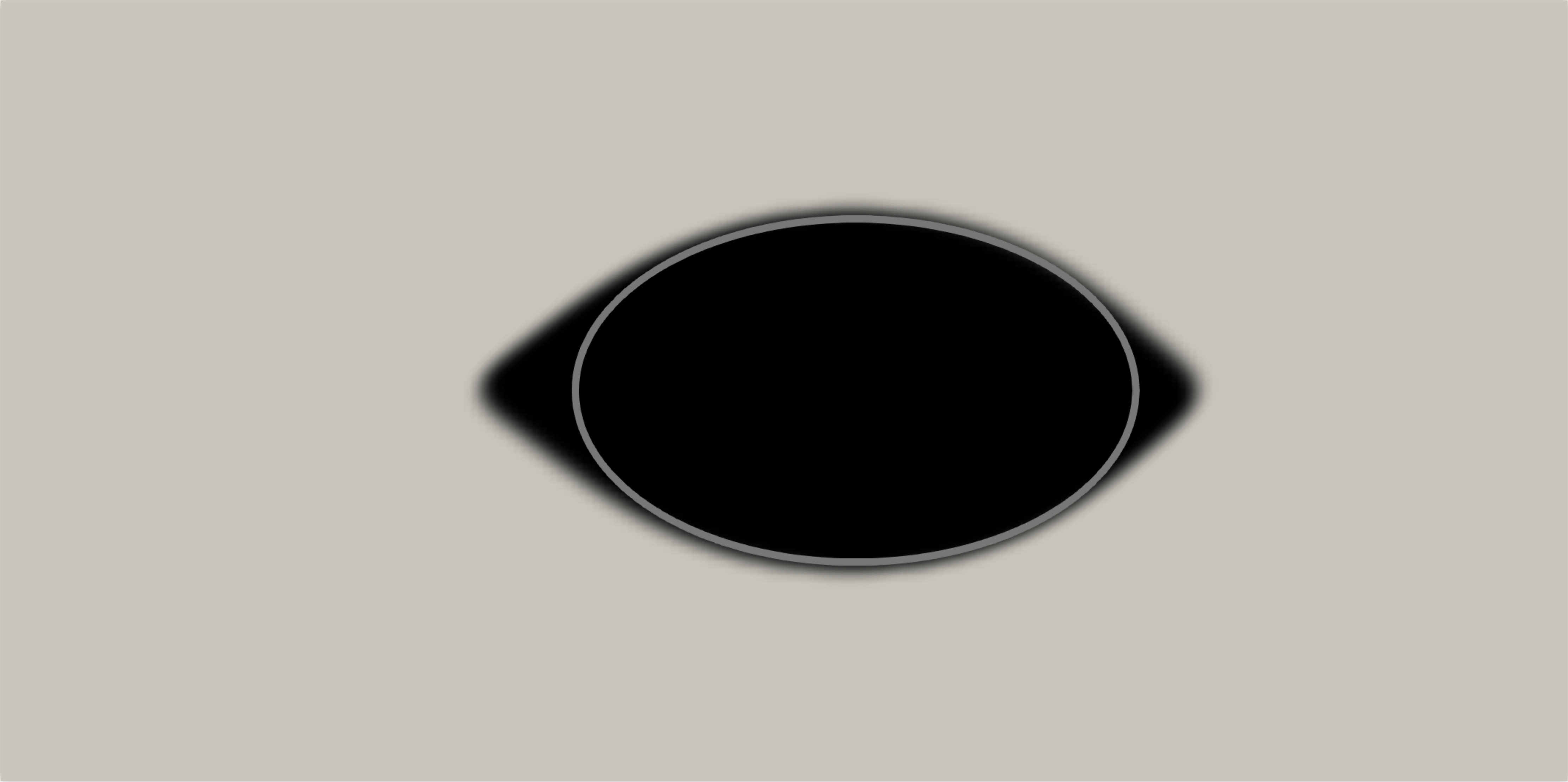}\hspace{1em}%
		\includegraphics[width=0.3\textwidth]{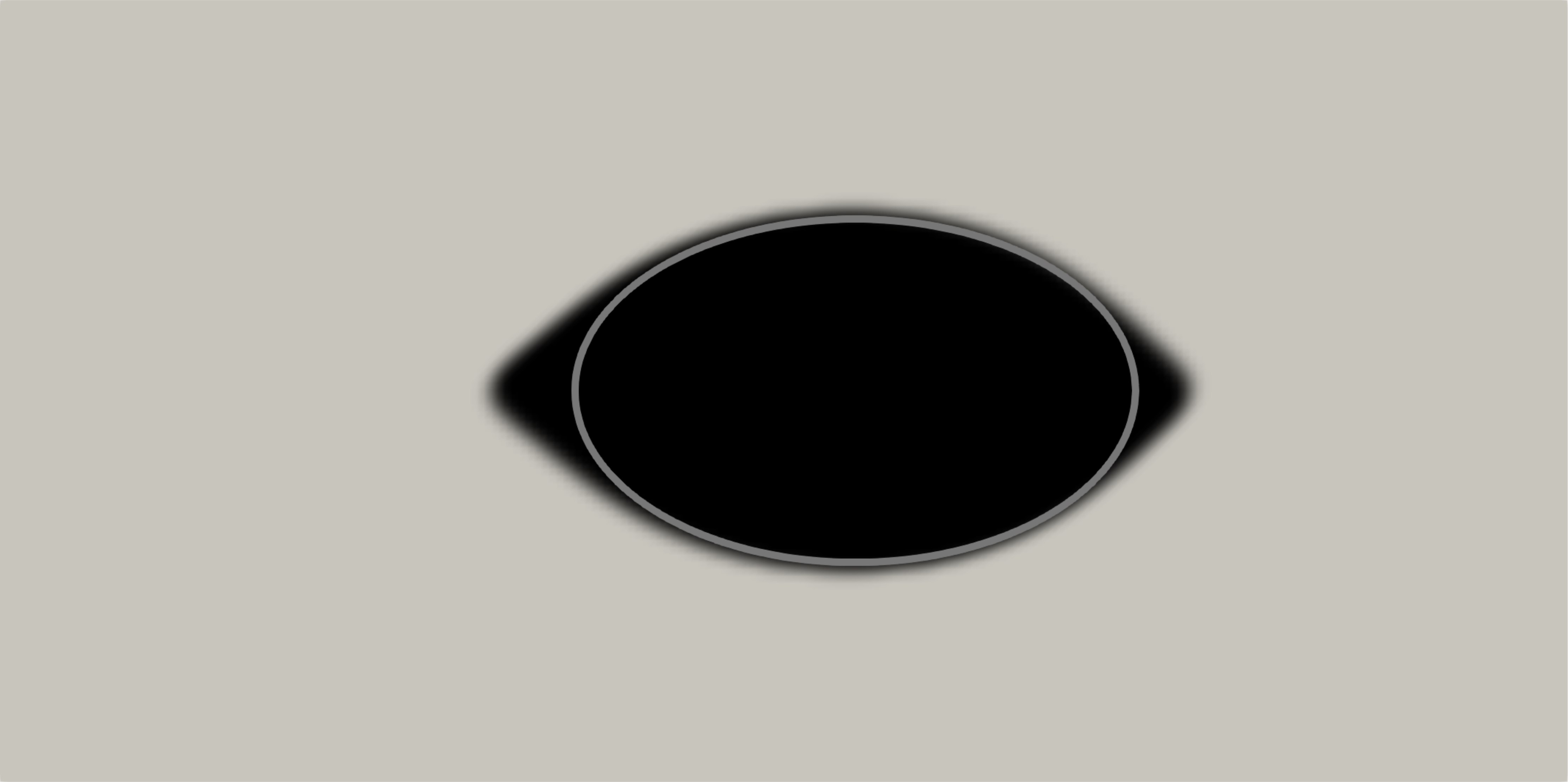}
		\caption{We show the approximate design function at iteration number $1,8,15,22,29,37$ of the optimization process from top left to bottom right, where the 37$^{th}$ iterate corresponds to the optimal shape. The figures also include the zero-level set of $\varphi_d$, for comparison. The images are cropped  to the subset $(0.9,2.0)\times (0.225,0.775)$ of $\Omega$. We observe that after the first iteration we achieve a reasonable guess for the location and the optimal topology which subsequently shrink towards the optimal shape, which also converges towards the \textit{target} phase-field $\varphi_d$.}
		\label{fig:numstat:phi-n}
	\end{figure}

	We also investigate the evolution of the objective function $J_s$ during the optimization process.
	In Figure~\ref{fig:opt-trend} we show the values of $J_s(\varphi^s)$ over the iteration number $n$, where $\varphi^s_n$ denotes the approximation to $\varphi^s$ after $n$ iterations. Additionally we show the three terms that contribute to $J_s(\varphi^s)$, 
	namely:
	\begin{enumerate}[label=(\roman*)]
		\item the tracking error $\displaystyle T_s(\varphi^s) = \int_\Omega \frac{\varphi^s+1}{2}|\bv - \bu_d|^2\dx$;
		\item the porous media penalty $\displaystyle P_s(\varphi^s) = \int_\Omega \beta_\epsilon(\varphi^s) |\bu|^2\dx$; and
		\item the Ginzburg--Landau energy $\displaystyle \gamma\mathbb E_\varepsilon(\varphi^s) = \frac{\gamma}{2c_0}\int_\Omega \frac{\epsilon}{2}|\nabla \varphi^s|^2 + \frac{1}{\epsilon}\Psi(\varphi^s)\dx$.
	\end{enumerate}
	
	We observe the typically hockey stick like behavior of steepest descent methods.
	Moreover the tracking error is the term that is most significantly affected throughout the optimization process, while the regularization terms $\mathbb{E}_\varepsilon$ and $P_s$ are only mildly affected by the optimization.
	We stress that these terms can only vanish if the obstacle vanishes. 
	The Ginzburg--Landau energy $\mathbb{E}_\varepsilon$ approximates the perimeter of the obstacle and can thus only vanish if the obstacle vanishes. Meanwhile, the porous media regularization $P_s$ can only vanish if either there is no obstacle or no flow inside the porous medium inside the obstacle.

	\begin{figure}
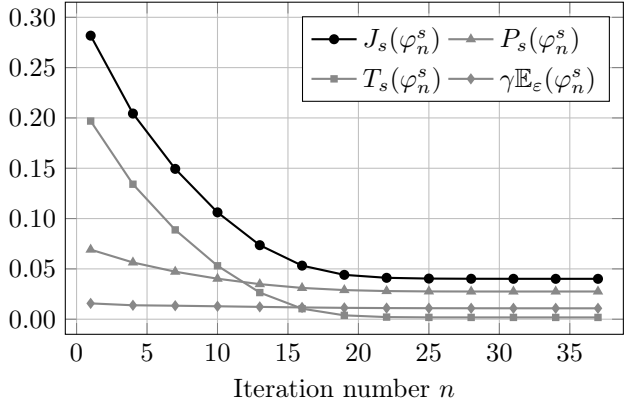

		\centering
		\includestandalone[mode=tex]{figures/standalone/optimization_trend}
		\caption{ The figure shows the evolution of $J_s$ over the optimization process. The optimization stops after 37 iterations by reaching the stopping criterion. Moreover, we show the evolution of the individual components of $J_s$.
			Furthermore, we observe a substantial decrease on the tracking part $T_s$, while the regularization terms $P_s$ and $\gamma\mathbb{E}_\varepsilon$ exhibit a minimal decrease and equilibrates early on during the optimization process.}
		\label{fig:opt-trend}
	\end{figure}

	\subsection{Minimizers of $J_T$ and comparison to $\varphi^s$ }

	In this section, we present the results concerning the time-dependent problem. We find the minimizer of $J_T$ for varying values of the final time, i.e., $T \in \{0.5,1,2,4,8,16\}$. To resolve the time-dependent problems, we use $\varphi^T_0 \equiv 1$ as initial guess for the case $T = 0.5$, and use the optimal solution for the case $T=2^{n-1}$, $n=1,2,3,4$,  as initial data for finding the optimal phase-field for the case $T = 2^n$.

	To back up the theoretical results obtained in Section~\ref{sec:LongTimeBehavior}, we first look at the gap $|J_s(\varphi^s) - J_T(\varphi^T)|$ against the time-horizon, in log-log scale. We see in Figure~\ref{fig:time_comp} that the slope of $|J_s(\varphi^s) - J_T(\varphi^T)|$, \jshsb{in our numerical range of $(0,16]$, follows $\mathcal{O}(T^{-1})$. In our numerical framework, we assumed zero external forcing for both the stationary and time-dependent Navier-Stokes equations. Instead, we utilize Dirichlet data to prescribe in/out-flow conditions, thereby modeling physically relevant phenomena. By classical lifting those boundary conditions may be interpreted as a volume force again denoted by $\bf_s$. We assume that the analytical results in our setting remain largely invariant under this lifting process.
    }
\jshsb{
    \begin{remark}
        In our numerical results we observe a decay rate of order $\mathcal O(T^{-1})$. 
        This can be attributed to different sizes of the constants $C_1$ and $C_2$ in the estimate of Theorem \ref{thm:asymp} and the time interval  $[0,16]$ considered. 
    If one would use the Poiseuille flow which determines our boundary conditions as classical lifting function, we would obtain a volume force $\bf_s$ for which $\|\bf_s\|_{L_2}\sim 3$.
    Since $\|\bu_d\|_{L^2}\sim 10$ we have $C_1\approx 4C_2 = \sqrt{T}C_2$ for $T=16$, so that on the time interval $[0,16]$, we expect to observe the decay rate $T^{-1}$.
    This would then explain the rate observed in our numerical simulations. 
    However, in our numerical implementation, we use a practical method of lifting the boundary conditions, whereby the respective values are incorporated directly into the discrete systems that appear. Nevertheless, we would expect to see similar numerical behavior in this case.
        %
    \end{remark}
}
	
	\begin{figure}
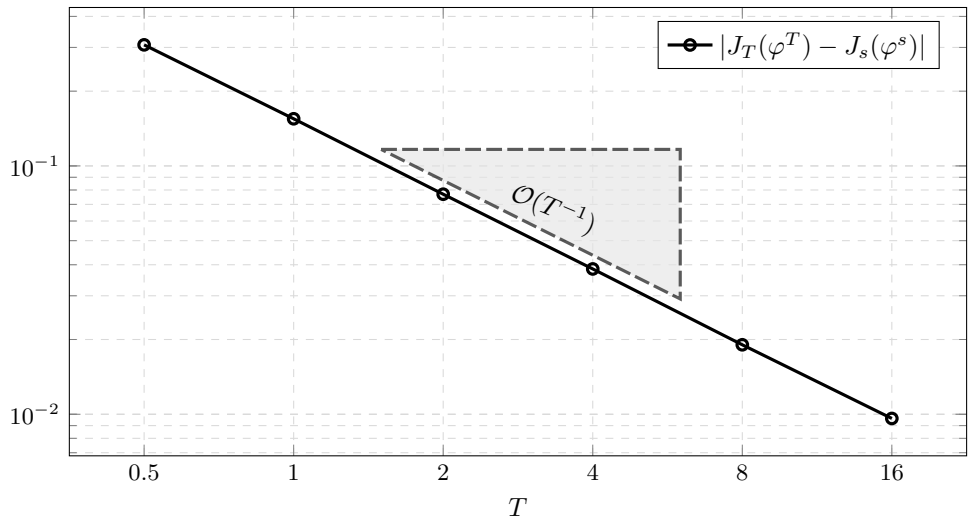

		\centering
		\includestandalone[width=0.85\textwidth]{figures/standalone/time_comp}
		\caption{The figure shows in log-scale the gap $|J_T(\varphi^T)-J_s(\varphi^s)|$ against the time-horizon  $T\in\{0.5,1,2,4,8,16\}$. We observe that the  gap follows the same slope as $\mathcal{O}(T^{-1})$ which agrees with the theoretical result \eqref{goal}.}
		\label{fig:time_comp}
	\end{figure}

	On the other hand, in Figure~\ref{fig:compstat} we present the zero-level set of the optimal solutions $\varphi^T$ for $T\in\{0.5,1,2,4,8,16\}$.  Figure~\ref{fig:compstat}(A) shows the zero-level set of the optimal solutions in comparison with the zero-level set of the optimal solution $\varphi^s$ to the stationary problem \eqref{reduced:objphasestat}. We observe that the solutions $\varphi^T$ move towards the solution $\varphi^s$ as we increase the final time $T$. To further corroborate such convergence, we also show the cross sections of the phase-fields through the plane passing through the points (1.5,0.6) and (1.5,0.65) in Figure~\ref{fig:compstat}(B), and through the plane passing through the points (1.7125,0.5) and (1.755,0.5) in Figure~\ref{fig:compstat}(C). This accumulation of the solutions illustrates the convergence established in \jshsb{\eqref{h1conv}, \eqref{lpconv}, \eqref{linfconv} and} Theorem~\ref{thm:phiInftyIsMin}.
		
	\begin{figure}
		\centering
		\begin{subfigure}[b]{0.85\textwidth}
			\centering
			\caption*{(A)}
			\includegraphics[width=\textwidth]{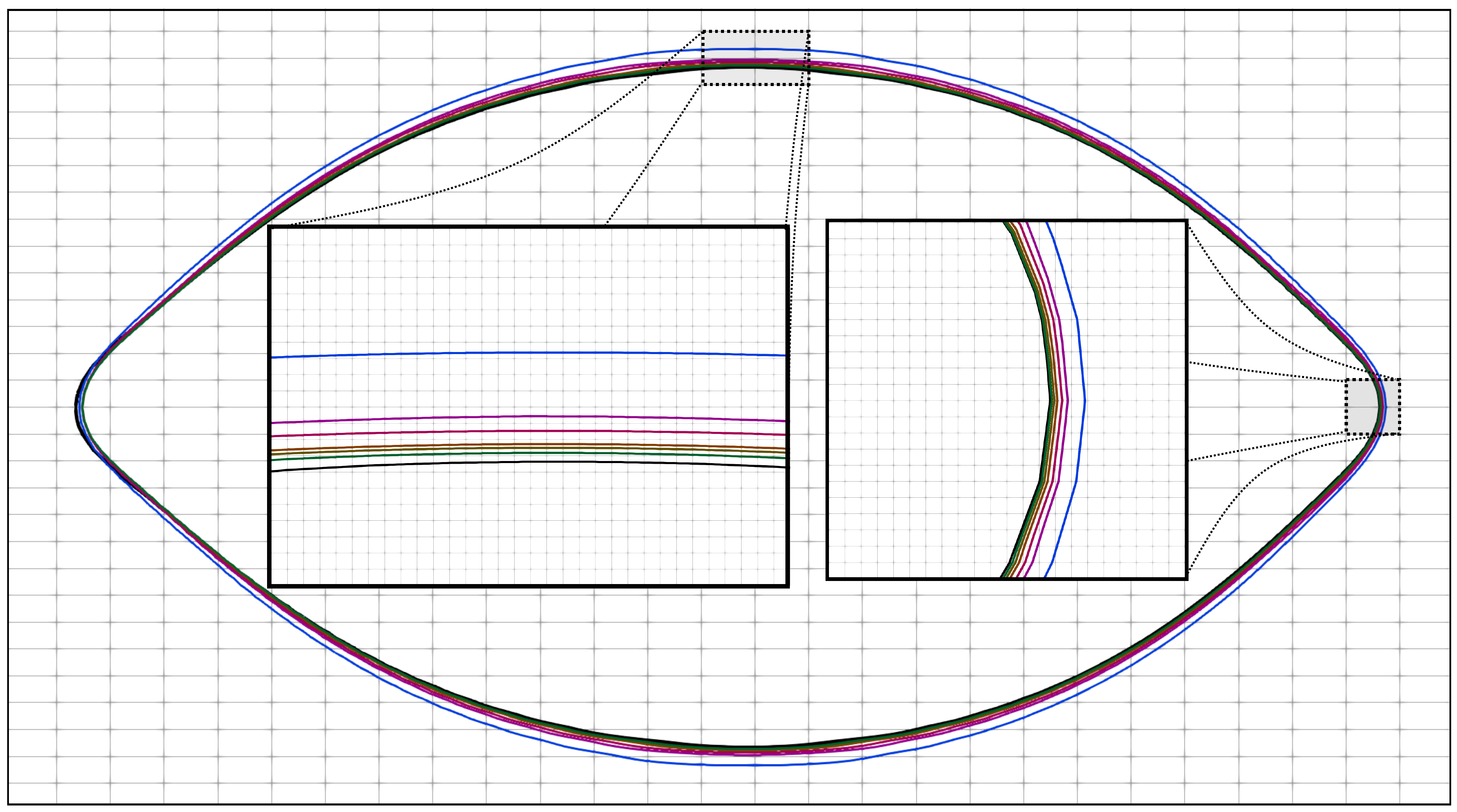}
		\end{subfigure}\\
		\begin{subfigure}[b]{0.45\textwidth}
			\centering
			\caption*{(B)}
			\includestandalone[width=\textwidth]{figures/standalone/cross_vert} 
		\end{subfigure}
		\begin{subfigure}[b]{0.45\textwidth}
			\centering
			\caption*{(C)}
			\includestandalone[width=\textwidth]{figures/standalone/cross_hor}
		\end{subfigure}
		\caption{We illustrate the qualitative behavior of the optimal solutions $\varphi^T$ for values $T = 0.5,1,2,\ldots,16$ and with comparison to the solution $\varphi^s$ to the stationary problem. (A) The zero-level sets of the solutions $\varphi^T$, for values $T = 0.5,1,2,\ldots,16$, and $\varphi^s$ are shown, we can also observe an accumulation of the level sets as we increase the time horizon $T$. (B) We plot the cross-section of the phase-fields on the line passing through the points $(1.5,0.6)$ and $(1.5,0.65)$. (C) We plot the cross-section of the phase-fields on the line passing through the points $(1.7125,0.5)$ and $(1.755,0.5)$. Figures (B) and (C) show how the solutions $\varphi^T$ accumulate toward the solution $\varphi^s$ as we increase the time-horizon $T$ illustrating the convergence of the optimal solutions of the time-dependent problem towards a minimizer of $J_s$ as shown in Theorem~\ref{thm:phiInftyIsMin}.}
		\label{fig:compstat}
	\end{figure}

	\section{Conclusion}
	
	In this work, we investigated the long-time behavior of solutions to a shape and topology optimization problem governed by the time-dependent Navier–-Stokes equations. The topology of the fluid domain was represented by a stationary phase-field variable acting as a smooth indicator function, while the fluid equations were formulated on a fixed hold-all domain using a porous media approximation. This setting extends earlier phase-field-based approaches for stationary flows to a fully time-dependent framework.
	
	The main analytical result shows that, as the time horizon tends to infinity, global minimizers of the time-dependent optimization problem converge in objective value to a global minimizer of the corresponding stationary problem. Moreover, an explicit convergence rate with respect to the time horizon was derived. 
	
	The theoretical findings are complemented by numerical investigations, which confirm the predicted convergence behavior and illustrate the practical relevance of the asymptotic analysis. From a numerical point of view, the results suggest that stationary or long-time-averaged models may serve as effective surrogates for large-horizon time-dependent optimization problems, thereby offering potential reductions in computational cost. Overall, this work advances the analytical and numerical understanding of phase-field-based shape optimization for unsteady fluid flows and provides a foundation for further developments at the interface of variational methods, optimal control, and numerical optimization.
	
	From a methodological perspective, the problem formulation exhibits a bilinear structure, as the design function enters the governing equations as a coefficient. The analysis therefore contributes to the understanding of asymptotic regimes in bilinear PDE-constrained optimization problems and provides a basis for investigating turnpike-type behavior in shape and topology optimization. The analytical tools developed herein are not limited to the specific tracking-type functional considered, but can be adapted to alternative objective functionals and to other evolutionary PDE models.

	\section*{Statements and Declarations.}
	
	\subsubsection*{Conflict of Interest.}
	The authors declare that they have no conflict of interest.
	
	\subsubsection*{Funding.}
	The authors acknowledge funding of the project \textit{Fluiddynamische Formoptimierung mit Phasenfeldern und Lipschitz-Methoden} by the German Research Foundation under the project number \href{https://gepris.dfg.de/gepris/projekt/543959359?language=en}{543959359}.
	
	\subsubsection*{Author Contribution.}
	(CRediT taxonomy)
	\begin{description}
		\item[M.H.]  Funding Acquisition,  Methodology, Project Administration, Resources, Supervision,  Writing - Original Draft, Writing - Review \& Editing
		\item[C.K.]  Funding Acquisition, Methodology, Project Administration, Resources, Supervision, Writing - Original Draft, Writing - Review \& Editing
		\item[J.S.H.S.]  Conceptualization, Data Curation, Methodology, Project Administration, Software, Visualization, Writing - Original Draft, Writing - Review \& Editing
	\end{description}

	\subsubsection*{Acknowledgement.}
	The authors acknowledge funding by the German Research Foundation within the project \textit{Fluiddynamische Formoptimierung mit Phasenfeldern und Lipschitz-Methoden} under the project number \href{https://gepris.dfg.de/gepris/projekt/543959359?language=en}{543959359}.

	
	
	\begingroup
	\printbibliography
	\endgroup
	
\end{document}